\newtheorem{thm}{Theorem}[section]
\newtheorem{prop}[thm]{Proposition}
\newtheorem{lem}[thm]{Lemma}
\theoremstyle{remark}
\newtheorem{rem}[thm]{Remark}
\newcommand{\FF}{\mathbb{F}}
\newcommand{\ZZ}{\mathbb{Z}}
\newcommand{\0}{\mathbf{0}}
\newcommand{\1}{\mathbf{1}}
\newcommand{\cC}{\mathcal{C}}
\newcommand{\cD}{\mathcal{D}}
\DeclareMathOperator{\rank}{rank}
\begin{document}
\title{Characterization and classification of optimal LCD codes}

\author{
Makoto Araya\thanks{Department of Computer Science,
Shizuoka University,
Hamamatsu 432--8011, Japan.
email: {\tt araya@inf.shizuoka.ac.jp}},
Masaaki Harada\thanks{
Research Center for Pure and Applied Mathematics,
Graduate School of Information Sciences,
Tohoku University, Sendai 980--8579, Japan.
email: {\tt mharada@tohoku.ac.jp}.}
and 
Ken Saito\thanks{
Research Center for Pure and Applied Mathematics,
Graduate School of Information Sciences,
Tohoku University, Sendai 980--8579, Japan.
email: {\tt kensaito@ims.is.tohoku.ac.jp}.}
}


\maketitle

\begin{abstract}
Linear complementary dual (LCD) codes are linear codes that intersect
with their dual trivially.
We give a characterization of LCD codes
over $\FF_q$ having large minimum weights for $q \in \{2,3\}$.
Using the characterization, for arbitrary $n$,
we determine the largest minimum weights
 among LCD $[n,k]$ codes over $\FF_q$, where
 $(q,k) \in \{(2,4), (3,2),(3,3)\}$.
Moreover,  for arbitrary $n$, we give a complete classification of optimal 
 LCD $[n,k]$ codes over $\FF_q$, where
 $(q,k) \in \{(2,3), (2,4), (3,2),(3,3)\}$.
\end{abstract}

\section{Introduction}\label{sec:1}

Linear complementary dual (LCD for short) 
codes are linear codes that intersect with their dual
trivially.
LCD codes were introduced by Massey~\cite{Massey} and 
gave an optimum linear
coding solution for the two user binary adder channel.
Much work has been done concerning LCD codes
for both theoretical and practical reasons
(see e.g.~\cite{AH-C}, \cite{AH}, \cite{AHS},
\cite{CG},
\cite{CMTQ},
\cite{CMTQP},
\cite{FLFR},
\cite{GKLRW},
\cite{HS},
\cite{HS2},
\cite{LN},
\cite{LLGF},
\cite{Massey} and the references given therein).
In particular, we emphasize the recent work by 
Carlet, Mesnager, Tang, Qi and Pellikaan~\cite{CMTQP}.
It has been shown in~\cite{CMTQP} that
any code over $\FF_q$ is equivalent to some LCD code
for $q \ge 4$.
This motivates us to study LCD codes over $\FF_q$ for $q \in \{2,3\}$.
Codes over $\FF_2$ and $\FF_3$ are called binary codes and
ternary codes, respectively.

It is a fundamental problem to determine the largest minimum
weight $d_q(n,k)$ among all LCD $[n,k]$ codes over $\FF_q$ and to classify
LCD $[n,k,d_q(n,k)]$ codes for a given pair $(n,k)$.
Recently, 
much work has been done concerning this fundamental problem
(see e.g.~\cite{AH-C}, \cite{AH}, \cite{AHS},
\cite{CMTQ},
\cite{CMTQP},
\cite{FLFR},
\cite{GKLRW},
\cite{HS},
\cite{LLGF} and the references given therein).
A complete classification of all binary LCD codes of lengths up to
$13$ and all ternary LCD codes of lengths up to $10$ was given
in~\cite{AH-C} by using the mass formulas established in~\cite{CMTQ}.
An LCD $[n,k,d_q(n,k)]$ code over $\FF_q$ is called optimal.
For arbitrary $n$,
the largest minimum weights $d_2(n,2)$ and $d_2(n,3)$ were determined 
in~\cite{GKLRW} and~\cite{HS}, respectively.
A classification of binary optimal LCD codes was done
in~\cite{HS} for dimension $2$.
In this paper, by considering the simplex codes, 
we give a characterization of LCD codes
over $\FF_q$ having large minimum weights for $q \in \{2,3\}$.
Using the characterization, for arbitrary $n$,
we complete the determination of 
the largest minimum weights $d_q(n,k)$
and a classification of optimal 
LCD $[n,k]$ codes over $\FF_q$, 
where $k \le 6-q$ and $q \in \{2,3\}$.

The paper is organized as follows.
In Section~\ref{sec:pre}, 
we give some definitions, notations and basic results used in this paper.
In Section~\ref{sec:basic}, we give some basic results related to LCD
codes.  In particular, we
give an observation on LCD codes related to the simplex
codes (Lemma~\ref{lem:190125-5}).
In Section~\ref{sec:char}, we give a 
characterization of LCD codes with
large minimum weights (Theorem~\ref{thm:main}),
which is the main result of this paper.
Lemma~\ref{lem:190125-5} is a key idea for Theorem~\ref{thm:main}.
Theorem~\ref{thm:main} (i) claims that
there is a one-to-one correspondence between
equivalence classes of LCD $[n,k,d]$ codes over $\FF_q$
with dual distances $d^\perp \ge 2$
and
equivalence classes of LCD
$[q \cdot r_{q,n,k,d},k,(q-1)r_{q,n,k,d}]$ codes over $\FF_q$
with dual distances $d^\perp \ge 2$ for $k \ge k_0$,
under the assumption that
$(q,k_0) \in \{(2,3),(3,2)\}$, $qd-(q-1)n \ge 1$
and $q \cdot r_{q,n,k,d} \ge k$, where $r_{q,n,k,d}=q^{k-1}n-\frac{q^k-1}{q-1}d$.
As a consequence, it follows that
if there is no LCD $[q \cdot r_{q,n,k,d},k,(q-1)r_{q,n,k,d}]$ code
over $\mathbb{F}_q$ with dual distance $d^\perp \ge 2$,
then there is no LCD $[n,k,d]$ code over $\mathbb{F}_q$,
under the same assumption (Theorem~\ref{thm:main} (ii)).
In addition, 
we modify Theorem~\ref{thm:main} to the form to be used easily
under some assumption
for our study 
in Sections~\ref{sec:2} and~\ref{sec:3}
 (Theorem~\ref{thm:main2}).
Roughly speaking,
Theorem~\ref{thm:main2} (i) says that a classification of
LCD codes with large minimum weights for small length
yields that of LCD codes with large minimum weights for
a family of lengths, and
Theorem~\ref{thm:main} (ii) says that the nonexistence of
LCD codes with large minimum weights for small length
yields that of LCD codes with large minimum weights for
a family of lengths.
In Sections~\ref{sec:2} and~\ref{sec:3},
by Theorem~\ref{thm:main2}, 
for arbitrary $n$,
we complete the determination of 
the largest minimum weights $d_q(n,k)$
and  a classification of optimal 
LCD $[n,k]$ codes over $\FF_q$, where $k \le 6-q$ and $q \in \{2,3\}$.
This result is mainly obtained by showing 
the nonexistence of
LCD codes with large minimum weights
and giving a classification of optimal
LCD codes for small lengths.
The nonexistence and the classification given
in Sections~\ref{sec:2} and~\ref{sec:3}
are obtained by computer calculations
by using the method given in 
Section~\ref{sec:Cmethod}.


\section{Preliminaries}\label{sec:pre}
In this section, 
we give some definitions, notations and basic results used in this
paper.

\subsection{Definitions and notations}

We denote the finite field of order $q$ by $\FF_q$, where $q$ is a prime power.
A (linear) $[n,k]$ {\em code} $C$ over $\FF_q$ is a $k$-dimensional subspace of $\FF_q^n$.
Codes over $\FF_2$ and $\FF_3$ are called {\em binary} codes and
{\em ternary} codes, respectively.
The parameters $n$ and $k$ are called the {\em length} and the
{\em dimension} of $C$, respectively.
A {\em generator matrix} of an $[n,k]$ code $C$ is a $k \times n$ matrix whose rows are basis of $C$. 
The {\it support} of a vector $x=(x_1,x_2,\ldots,x_n) \in \FF_q^n$ is the subset $\{i \mid x_i \neq 0\}$ of $\{1,2,\ldots,n\}$. 
The {\em weight} of a vector $x \in \FF_q^n$ is the cardinality of the support of $x$.
A vector of $C$ is called a {\em codeword} of $C$.
The minimum nonzero weight of all codewords in $C$ is called the {\em minimum weight} $d(C)$ of $C$.
An $[n,k]$ code with minimum weight $d$ is called an $[n,k,d]$ code.
Two $[n,k]$ codes $C_1$ and $C_2$ over $\FF_q$ are {\em equivalent},
denoted $C_1 \cong C_2$,
if there is an $n \times n$ monomial matrix $P$ over $\FF_q$ such that $C_2=\{cP \mid c \in C_1\}$.

The {\em dual} code $C^\perp$ of an $[n,k]$ code $C$ over $\FF_q$ is defined as $C^\perp=\{x \in \FF_q^n \mid x \cdot y = 0 \text{ for all } y \in C\}$, where $x \cdot y$ is the standard inner product.
A code $C$ is called {\em self-orthogonal} if $C \subset C^\perp$.
A code $C$ is called a {\em linear complementary dual}
(LCD for short) 
code if $C \cap C^\perp = \{\0_n\}$,
where $\0_n$ denotes the zero vector of length $n$.
The following characterization is due to Massey~\cite{Massey}.
Throughout this paper, we use the following proposition
without mentioning this, 
when we determine whether a given code is LCD or not.

\begin{prop}\label{prop:lcd}
Let $G$ be a generator matrix of a code $C$ over $\FF_q$.
 Then $C$ is LCD if and only if $GG^T$ is nonsingular,
 where $G^T$ denotes the transpose of $G$.
\end{prop}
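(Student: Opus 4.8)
The plan is to reduce the condition $C\cap C^\perp=\{\0_n\}$ to a statement about the left null space of the square $k\times k$ matrix $GG^T$, using that $G$ has full row rank $k$. First I would record two elementary facts. Since the rows of $G$ span $C$, a vector $v\in\FF_q^n$ lies in $C^\perp$ if and only if it is orthogonal to every row of $G$, i.e.\ if and only if $Gv^T=\0$, equivalently $vG^T=\0$. And since $\rank G=k$, the map $\FF_q^k\to C$, $x\mapsto xG$, is a bijection; in particular $xG=\0$ forces $x=\0$.

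Combining these, an arbitrary codeword $v\in C$ is of the form $v=xG$ for a unique $x\in\FF_q^k$, and such a $v$ lies in $C^\perp$ precisely when $x(GG^T)=\0$. Hence
\[
C\cap C^\perp=\{\,xG : x\in\FF_q^k,\ x(GG^T)=\0\,\}.
\]
Now I would argue both implications from this identity. If $GG^T$ is nonsingular, then $x(GG^T)=\0$ forces $x=\0$, so the right-hand side is $\{\0_n\}$ and $C$ is LCD. Conversely, if $GG^T$ is singular, choose $x\neq\0$ with $x(GG^T)=\0$; then $v:=xG$ is a \emph{nonzero} codeword, by injectivity of $x\mapsto xG$, lying in $C\cap C^\perp$, so $C$ is not LCD.

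I do not expect a genuine obstacle here; the one point to be careful about is that over $\FF_q$ a nonzero vector can be orthogonal to itself, so one cannot appeal to any positive-definiteness of the form $x\mapsto x(GG^T)x^T$ — the argument must stay entirely at the level of ranks and null spaces of the square matrix $GG^T$, as above. It is also worth remarking (for consistency of the statement) that nonsingularity of $GG^T$ is independent of the choice of generator matrix: replacing $G$ by $AG$ with $A\in GL_k(\FF_q)$ replaces $GG^T$ by $A(GG^T)A^T$, which is nonsingular if and only if $GG^T$ is.
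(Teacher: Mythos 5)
Your argument is correct and complete: reducing $C\cap C^\perp$ to the left null space of $GG^T$ via the bijection $x\mapsto xG$ is exactly the standard proof, and your cautionary remark about self-orthogonal vectors (no positive-definiteness available over $\FF_q$) is the right point to flag. The paper itself gives no proof of this proposition --- it simply cites Massey --- and your argument is essentially Massey's original one, so there is nothing to reconcile.
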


We use the following notations throughout this paper.
Let $\ZZ_{\ge 0}$ denote the set of nonnegative integers.
Let $I_k$ denote the identity matrix of order $k$,
and let $O$ denote the zero matrix of appropriate size.
For a positive integer $s$ and a matrix $A$, let $A^{(s)}$ denote
the juxtaposition
$\begin{pmatrix}
&  A &A&\cdots &A&
\end{pmatrix}$
of $s$-copies of $A$.
Let $\1_n$ denote the all-one vector of length $n$.
For $q$, $n$, $k$ and $d$, we use the following notations:
\begin{align}
[k]_q&=\frac{q^k-1}{q-1}, \label{eq:k}\\
r_{q,n,k,d}&=q^{k-1}n-[k]_q \cdot d, \label{eq:r}\\
g_q(n,k)&=\max\left\{d \in \mathbb{Z}_{\ge 0} ~\middle|~ n \ge
 \sum_{i=0}^{k-1} \left\lceil \frac{d}{q^i}\right\rceil\right\},
\label{eq:g}
\\
d_q(n,k)&=\max\left\{d \in \mathbb{Z}_{\ge 0} \mid
\text{ there is an LCD $[n,k,d]$ code over $\FF_q$}
\right\}. 
\label{eq:dmax}
\end{align}
By the Griesmer bound, any $[n,k]$ code over $\mathbb{F}_q$ has minimum weight at most $g_q(n,k)$.
An LCD $[n,k,d_q(n,k)]$ code over $\FF_q$ is called {\em optimal}.
The minimum weight of the dual code $C^\perp$ of a code $C$ is called
the {\em dual distance} of $C$ and it is denoted by $d^\perp$.
Let $N_q(n,k)$ denote the number of all inequivalent optimal LCD
$[n,k]$ codes over $\FF_q$ with dual distances $d^\perp \ge 2$.

\subsection{Some observations on generator matrices}

\begin{lem}\label{lem:GM}
Let $C$ and $C'$ be $[n,k]$ codes over $\FF_q$.
Let $M$ and $M'$ be generator matrices of $C$ and $C'$, respectively.
If $C \cong C'$, then
there is a nonsingular matrix $U$ and there is a monomial
matrix $P$ such that $M=UM'P$.
\end{lem}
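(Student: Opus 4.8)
The plan is to unwind the definition of equivalence and then reduce to the standard fact that any two generator matrices of one linear code differ by an invertible matrix acting on the left.

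First I would use the hypothesis $C \cong C'$ to fix an $n \times n$ monomial matrix $Q$ over $\FF_q$ with $C' = \{cQ \mid c \in C\}$. Since the rows of $M$ form a basis of $C$, the rows of $MQ$ are codewords of $C'$, and they remain linearly independent because $Q$ is invertible; as $C'$ has dimension $k$, these $k$ independent vectors of $C'$ form a basis of $C'$, so $MQ$ is a generator matrix of $C'$.

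Next I would compare $M'$ with $MQ$. Both are generator matrices of $C'$, so each row of $M'$, being a codeword of $C'$, is a linear combination of the rows of $MQ$; recording the coefficients row by row produces a $k \times k$ matrix $V$ over $\FF_q$ with $M' = V(MQ)$. Since $M'$ and $MQ$ both have rank $k$, the matrix $V$ must be nonsingular.

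Finally I would rearrange $M' = VMQ$ into $M = V^{-1} M' Q^{-1}$ and set $U := V^{-1}$, which is nonsingular, and $P := Q^{-1}$, which is a monomial matrix because the inverse of a monomial matrix is again monomial. Then $M = UM'P$, as required. There is no genuine obstacle in this argument; the only step carrying any content is the claim that two generator matrices of a fixed code are related by left multiplication by a nonsingular matrix — a standard fact — together with the observation that a monomial matrix has a monomial inverse.
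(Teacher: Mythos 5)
Your proof is correct and is exactly the standard argument the paper has in mind; the paper itself dismisses this lemma with ``It is trivial,'' so your write-up simply supplies the routine details (the monomial matrix realizing the equivalence, the change-of-basis matrix between two generator matrices of the same code, and the fact that inverses of monomial matrices are monomial). No gaps, and no difference in approach beyond making the implicit argument explicit.
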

\begin{proof}
 It is trivial.
\end{proof}
%

Although the following lemma is somewhat trivial, 
we give a proof for the sake of completeness.

\begin{lem}\label{lem:1}
Let $A$ be a $k \times \ell$ $\mathbb{F}_q$-matrix.
Let $B$ and $B'$ be $k \times m$ $\mathbb{F}_q$-matrices.
If there is an $(\ell+m) \times (\ell+m)$ monomial matrix $P$ such that
$
\begin{pmatrix}
& A & B' &
\end{pmatrix}
=
\begin{pmatrix}
& A & B &
\end{pmatrix}
P$,
then there is an $m \times m$ monomial matrix $P^*$ such that
$B'=BP^*$.
\end{lem}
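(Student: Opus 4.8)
The statement to prove is Lemma~\ref{lem:1}: given a $k \times \ell$ matrix $A$ and $k \times m$ matrices $B, B'$ with $(A \mid B') = (A \mid B)P$ for some monomial $(\ell+m)\times(\ell+m)$ matrix $P$, there is an $m \times m$ monomial matrix $P^*$ with $B' = BP^*$. The plan is to analyze how $P$ acts on the column blocks. Write $P$ in block form with respect to the splitting of coordinates into the first $\ell$ and the last $m$:
\begin{equation*}
P = \begin{pmatrix} P_{11} & P_{12} \\ P_{21} & P_{22} \end{pmatrix},
\end{equation*}
where $P_{11}$ is $\ell \times \ell$, $P_{22}$ is $m \times m$, etc. Since $P$ is monomial, each of its rows and each of its columns has exactly one nonzero entry. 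Expanding the matrix equation gives $A = AP_{11} + BP_{21}$ and $B' = AP_{12} + BP_{22}$. The goal is to show $P_{12} = O$ (so that $B' = BP_{22}$) and that $P_{22}$ is itself monomial.

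First I would argue that $P_{12} = O$. The key combinatorial fact is that in a monomial matrix, the nonzero entries occupy a set of positions forming a permutation: if column $j$ (among the first $\ell$) has its unique nonzero entry in row $i$, then row $i$ has no other nonzero entry, so row $i$ contributes nothing to any other column. Consider the columns of the first block of $(A \mid B)P$, which must equal $A$. Each such column is obtained by picking out a single scaled column of $(A \mid B)$ — namely the column indexed by wherever that column of $P$ has its nonzero entry. Thus each of the first $\ell$ columns of $(A \mid B)P$ is a scalar multiple of some column of $(A \mid B)$, and these $\ell$ source columns are distinct (because $P$ is monomial, distinct columns of $P$ have their nonzero entries in distinct rows). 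If even one of the first $\ell$ columns of $P$ had its nonzero entry in a row indexed by the last $m$ coordinates, that would mean some column of $A$ equals a scalar multiple of some column of $B$ — which need not be false in general, so this line alone does not immediately force $P_{12}=O$.

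So the real content — and the step I expect to be the main obstacle — is handling the possibility that a column of $A$ coincides (up to scalar) with a column of $B$, which would a priori allow $P$ to "route" an $A$-column through the $B$-block. The resolution is a counting/pigeonhole argument rather than an entry-by-entry one. Let $S \subseteq \{1,\dots,\ell\}$ be the set of indices $j$ such that the $j$-th column of $P$ has its nonzero entry in one of the last $m$ rows; this is exactly the support of the columns of $P_{21}$, equivalently $|S| = \operatorname{rank}$-type count of nonzero columns of $P_{21}$. For each $j \in S$, the $j$-th column of the product is a scalar multiple of a column of $B$; since $P$ is monomial these point to $|S|$ distinct columns of $B$. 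Symmetrically, let $T \subseteq \{\ell+1,\dots,\ell+m\}$ be the set of last-block column indices of $P$ whose nonzero entry lies in one of the first $\ell$ rows; these are the nonzero columns of $P_{12}$, and $|T|$ columns of $B'$ are scalar multiples of columns of $A$. Because $P$ is a bijection on coordinates, the number of last-$m$ rows "used up" by the first $\ell$ columns equals $|S|$, leaving $m - |S|$ of those rows available for the last $m$ columns; hence the last $m$ columns of $P$ have exactly $m - |S|$ of their nonzero entries in the last-$m$ rows and $|T| = \ell - (\ell - |S|) = |S|$ of them in the first $\ell$ rows — wait, more carefully: the first $\ell$ columns have $\ell - |S|$ entries in the first block of rows and $|S|$ in the second; since each row block is saturated, the last $m$ columns have $\ell - (\ell-|S|) = |S|$ entries in the first row block, i.e.\ $|T| = |S|$, and $m - |S|$ entries in the second. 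Thus $P_{12}$ has exactly $|S|$ nonzero columns and $P_{22}$ has exactly $m - |S|$ nonzero columns, with $P_{22}$ having at most one nonzero entry per row and per column. This is not yet monomial unless $|S| = 0$.

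To finish, I would show $|S| = 0$. Here is where I would use that the hypothesis must hold as a matrix identity and invoke a minimality/normalization reduction: it suffices to prove the lemma after replacing $A$ by the submatrix of its distinct (up to scalar) columns — but cleaner is to observe that we get to \emph{choose} $P^*$, and the claim only asserts existence. So I will not try to show $P_{12}=O$ for the given $P$; instead I construct a new monomial matrix directly. Define $P^* $ as the $m\times m$ monomial matrix built as follows: for each of the last $m$ columns of $P$, if its nonzero entry (value $\alpha$ in row $i$) lies in the last block ($i = \ell + i'$), put $\alpha$ in the corresponding position of $P^*$; if instead it lies in the first block (row $i \le \ell$), then the corresponding column of $B'$ equals $\alpha$ times the $i$-th column of $A$, and since that $i$-th column of $A$ is \emph{not} among the first $\ell$ columns picked by $P_{11}$ (those are a permuted full set — actually they may be), we need another route. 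The robust fix: since $(A\mid B') = (A \mid B)P$ and $P$ is invertible, $(A \mid B) = (A \mid B')P^{-1}$ with $P^{-1}$ also monomial; so the roles of $B$ and $B'$ are symmetric, and by the column-count argument above applied to $P^{-1}$, the matrix $P^{-1}$ restricted to the last $m$ columns has the same structure. The clean completion is: let $J = \{\ell+1,\dots,\ell+m\}$; the monomial matrix $P$ maps the coordinate set $J$ bijectively to some coordinate set $P(J) \subseteq \{1,\dots,\ell+m\}$; the columns of $(A\mid B)P$ indexed by $J$ are exactly the columns of $(A \mid B)$ indexed by $P(J)$, rescaled. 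But those columns equal the columns of $B'$, which are by definition "new" columns; applying the same to the first $\ell$ columns shows $A$'s columns come from $P^{-1}$ of $\{1,\dots,\ell\}$. Running the pigeonhole on the $k$-row structure: actually the correct and standard argument is simply that a monomial $P$ satisfying such a block equation, together with the fact that we may assume $A$ has no column equal to a scalar multiple of another (else delete duplicates, which changes neither hypothesis nor conclusion since $B,B'$ are untouched), forces $P$ to be block-diagonal. Then $P_{22}$ is monomial and $P^* = P_{22}$ works. The one genuine subtlety — the main obstacle — is precisely this reduction to "$A$ column-reduced," i.e.\ verifying that deleting repeated columns of $A$ is legitimate: it is, because $(A\mid B)P = (A\mid B')$ persists under deleting the same repeated columns from both sides (the monomial $P$ then restricts to a smaller monomial matrix, as a repeated column of $A$ on the left forces a repeated source column on the right, which can be pruned consistently), and the conclusion $B' = BP^*$ does not mention $A$ at all.

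Summarizing the steps in order: (1) block-decompose $P$ as $\begin{pmatrix}P_{11}&P_{12}\\P_{21}&P_{22}\end{pmatrix}$ and write out $A = AP_{11}+BP_{21}$, $B' = AP_{12}+BP_{22}$; (2) reduce to the case where no column of $A$ is a scalar multiple of another column of $A$, by consistently deleting repeated columns from $A$ on both sides of $(A\mid B)P = (A\mid B')$; (3) in that reduced case, use the monomial (permutation-of-positions) structure of $P$ to show that the first $\ell$ columns of $P$ cannot route through the last $m$ rows — any such routing would force an $A$-column to equal a scalar multiple of a $B$-column, but then invertibility of $P$ and the column-reducedness of $A$ give a contradiction via counting distinct columns — hence $P_{21}=O$, and by invertibility $P_{12}=O$; (4) conclude $P$ is block diagonal, $P_{22}$ is an $m\times m$ monomial matrix, and $B' = BP_{22}$, so $P^* = P_{22}$ suffices. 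I expect step (3) to be where all the real work is.
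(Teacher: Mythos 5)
Your step (3) is where the argument fails, and it fails irreparably as stated: the hypothesis does \emph{not} force $P$ to be block diagonal, even when $A$ has no two columns that are scalar multiples of each other. Take $k=\ell=m=1$, $A=B=B'=\begin{pmatrix}1\end{pmatrix}$ and $P=\begin{pmatrix}0&1\\1&0\end{pmatrix}$: then $\begin{pmatrix}A&B'\end{pmatrix}=\begin{pmatrix}A&B\end{pmatrix}P$ holds, $A$ is trivially column-reduced, yet $P_{21}\neq O$, and your prescription $P^*=P_{22}=\begin{pmatrix}0\end{pmatrix}$ is neither monomial nor does it satisfy $B'=BP^*$. So the ``contradiction via counting distinct columns'' you invoke does not exist: a column of $A$ being a scalar multiple of a column of $B$ is perfectly compatible with the hypothesis, and in that case the given $P$ genuinely routes an $A$-column through the $B$-block. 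The conclusion is still true, but one must \emph{modify} $P$ rather than read off its lower-right block. That is exactly what the paper does: it inducts on $\ell$, reducing to $\ell=1$ with $A=a^T$; if $p_{1,1}\neq 0$ one can indeed just delete the first row and column of $P$, while if $p_{1,1}=0$ and $p_{1,j_0}\neq 0$, the $(j_0-1)$-th column of $B'$ equals $p_{1,j_0}a^T$, and multiplying on the right by the monomial matrix $Q$ that interchanges (with scalings $p_{1,j_0},p_{1,j_0}^{-1}$) columns $1$ and $j_0$ fixes $\begin{pmatrix}a^T&B'\end{pmatrix}$ while turning $PQ$ into the block form $\begin{pmatrix}1&0\\0&P^*\end{pmatrix}$, whence $B'=BP^*$.

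Your step (2) is also not justified as described: deleting a repeated column of $A$ on the left forces you, in order to keep $P$ square and monomial, to delete the row of $P$ carrying that column's nonzero entry, i.e.\ the corresponding \emph{source} column of $\begin{pmatrix}A&B\end{pmatrix}$ on the right; that source column may well lie in the $B$-block (both repeated $A$-columns can be sourced from $B$-columns), so the pruning can alter $B$, which the conclusion does mention. This is secondary, however, since even in the fully reduced situation the block-diagonality claim of step (3) is false, so the proposal as written does not prove the lemma.
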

\begin{proof}
The proof is by induction on $\ell$.
It is sufficient to show the case $\ell=1$.
Let $a$ be a vector of $\mathbb{F}_q^k$.
Suppose that there is an $(m+1) \times (m+1)$ monomial matrix $P=(p_{i,j})$ such that
\begin{equation}\label{eq:11}
\begin{pmatrix}
& a^T & B' &
\end{pmatrix}
=
\begin{pmatrix}
& a^T & B &
\end{pmatrix}
P.
\end{equation}

Suppose that $p_{1,1} \ne 0$.
Let $P'$ be the $m \times m$ monomial matrix obtained from $P$
by deleting the first row and the first column.
Since $p_{1,j}=0$ for each $j \in \{2,3,\ldots,m+1\}$,
we have $B'=BP'$.

Now suppose that $p_{1,1}=0$.
Then $p_{1,j_0}\ne 0$ for some $j_0 \in \{2,3,\ldots,m+1\}$.
From~\eqref{eq:11}, the $(j_0-1)$-th column of $B'$ is equal to $p_{1,j_0}a^T$.
Let $Q=(q_{i,j})$ be the $(m+1) \times (m+1)$ monomial matrix such that
$q_{i,i}=1$ for each $i \in \{2,3,\ldots,m+1\} \setminus \{j_0\}$, and
$(q_{1,j_0},q_{j_0,1})=(p_{1,j_0},p_{1,j_0}^{-1})$.
Then we have
\begin{equation}\label{eq:2}
\begin{pmatrix}
& a^T & B' &
\end{pmatrix}
Q=
\begin{pmatrix}
& a^T & B' &
\end{pmatrix}
\text{ and }
PQ=
\begin{pmatrix}
1 & 0 & \cdots & 0 \\
0 &&& \\
\vdots &&P^*& \\
0 &&& \\
\end{pmatrix},
\end{equation}
for some $m \times m$ monomial matrix $P^*$,
where the blanks are filled up with zero's.
From~\eqref{eq:11} and~\eqref{eq:2}, we have
\begin{align*}
\begin{pmatrix}
& a^T & B' &
\end{pmatrix}
=
\begin{pmatrix}
& a^T & B' &
\end{pmatrix}
Q 
=
\begin{pmatrix}
& a^T & B &
\end{pmatrix}
PQ 
=
\begin{pmatrix}
& a^T & BP^* &
\end{pmatrix}.
\end{align*}
Hence, we have $B'=BP^*$, which gives the desired conclusion.
\end{proof}

Throughout this paper,
we remark that the blanks are filled up with zero's
for each matrix.

%

\section{Basic properties on LCD codes}
\label{sec:basic}

In this section, we 
give some basic results related to LCD codes.  In particular, we
give an observation on LCD codes related to the simplex
codes (Lemma~\ref{lem:190125-5}).
This is a key idea for our characterization of LCD codes with
large minimum weights (Theorem~\ref{thm:main}),
which is the main result of this paper.
Throughout this section, we assume that $q \in \{2,3\}$.


\subsection{Equivalence classes}

Let $C$ be an LCD $[n,k,d]$ code over $\FF_q$.
Define an $[n+1,k,d]$ code $\overline{C}$ over $\FF_q$ as
$\overline{C}=\{(x,0) \mid x \in C\}$.
It is trivial that $\overline{C}$ is an LCD  code with dual distance $1$.

\begin{lem}[{\cite[Proposition~3]{AH-C}}]\label{lem:class}
Let $\cC^q_{n,k,d}$ denote all equivalence classes of LCD $[n,k,d]$
codes over $\FF_q$.
Let $\cD^q_{n,k,d}$ denote all equivalence classes of LCD $[n,k,d]$
codes over $\FF_q$ with dual distances $d^\perp\ge 2$.
Let $\overline{\cC^q_{n-1,k,d}}$ denote 
all equivalence classes containing $\overline{C_1},\overline{C_2},\ldots,
\overline{C_t}$,
where
$C_1,C_2,\ldots,C_t$ denote representatives of
$\cC^q_{n-1,k,d}$, and
$t=|\overline{\cC^q_{n-1,k,d}}|$.
Then 
$\cC^q_{n,k,d}= \cD^q_{n,k,d} \cup \overline{\cC^q_{n-1,k,d}}$.
\end{lem}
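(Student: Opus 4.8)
The plan is to prove the set equality $\cC^q_{n,k,d}= \cD^q_{n,k,d} \cup \overline{\cC^q_{n-1,k,d}}$ by establishing both inclusions, after first noting that $\cD^q_{n,k,d}$ and $\overline{\cC^q_{n-1,k,d}}$ are each obviously subsets of $\cC^q_{n,k,d}$: the former by definition (a code with $d^\perp \ge 2$ is in particular a code), and the latter because each $\overline{C_i}$ is an LCD $[n,k,d]$ code (as noted just before the lemma statement, $\overline{C}$ is LCD with dual distance $1$, hence $d^\perp = 1 < 2$, and its minimum weight and dimension are unchanged). This gives the inclusion $\supseteq$ for free.

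For the nontrivial inclusion $\subseteq$, I would take an arbitrary LCD $[n,k,d]$ code $C$ over $\FF_q$ and split into two cases according to its dual distance $d^\perp$. If $d^\perp \ge 2$, then by definition the equivalence class of $C$ lies in $\cD^q_{n,k,d}$ and we are done. If $d^\perp = 1$, then $C^\perp$ contains a weight-one codeword, which means some coordinate, say the $j$-th, is identically zero on all of $C$ (equivalently, $C$ has a zero column in every generator matrix at position $j$). After applying a coordinate permutation — which preserves the equivalence class, the parameters, and the LCD property — I may assume the last coordinate is the all-zero column. Then $C = \{(x,0) \mid x \in C'\}$ where $C'$ is the code of length $n-1$ obtained by deleting that coordinate; that is, $C = \overline{C'}$.

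The remaining point is to check that $C'$ is itself an LCD $[n-1,k,d]$ code, so that its equivalence class belongs to $\cC^q_{n-1,k,d}$ and hence $C = \overline{C'}$ has its class in $\overline{\cC^q_{n-1,k,d}}$. The dimension is $k$ since deleting an all-zero column does not merge any codewords; the minimum weight is still $d$ since the deleted coordinate contributed $0$ to every weight; and for the LCD property I would use Proposition~\ref{prop:lcd}: if $G = (G' \mid \0)$ is a generator matrix of $C$ with $G'$ a generator matrix of $C'$, then $GG^T = G'G'^T$, so $GG^T$ nonsingular forces $G'G'^T$ nonsingular, i.e.\ $C'$ is LCD. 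Finally, one must confirm the bookkeeping that the equivalence class of $\overline{C'}$ actually appears in the list $\overline{\cC^q_{n-1,k,d}}$: since $C'$ is LCD $[n-1,k,d]$, its class is one of the classes of $\cC^q_{n-1,k,d}$ represented by some $C_i$, so $C' \cong C_i$, and then $\overline{C'} \cong \overline{C_i}$ (a monomial equivalence on $n-1$ coordinates extends by the identity on the last coordinate to a monomial equivalence of the barred codes), placing the class of $C$ in $\overline{\cC^q_{n-1,k,d}}$.

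The main obstacle, such as it is, is the last verification — that $C' \cong C_i$ implies $\overline{C'} \cong \overline{C_i}$ — which is routine but worth stating carefully, together with the symmetric check that makes $\overline{\cC^q_{n-1,k,d}}$ well-defined (the choice of representatives $C_1,\dots,C_t$ does not matter). Neither case is genuinely hard; the proof is essentially a case analysis on $d^\perp$ combined with the elementary observation that ``dual distance $1$'' is exactly ``has a zero coordinate,'' and the hypothesis $q \in \{2,3\}$ plays no special role here beyond the standing assumption of the section.
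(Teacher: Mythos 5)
Your proof is correct: the paper itself does not prove this lemma but imports it as \cite[Proposition~3]{AH-C}, and your case analysis on $d^\perp$ (dual distance $1$ meaning a zero coordinate, puncturing it, checking that $GG^T=G'G'^T$ preserves the LCD property, and extending the equivalence $C'\cong C_i$ to $\overline{C'}\cong\overline{C_i}$) is exactly the standard argument behind that cited result. Nothing is missing.
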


For a classification of LCD $[n,k,d]$ codes over $\FF_q$,
by the above lemma, it is sufficient to consider
a classification of LCD $[n,k,d]$ codes over $\FF_q$
with dual distances $d^\perp \ge 2$.
In addition,
if there is an LCD $[n,k,d]$ code $C$ over $\mathbb{F}_q$ and
$d_q(n-1,k) \le d-1$, then $d(C^\perp) \ge 2$
(see~\eqref{eq:dmax} for the notation $d_q(n,k)$). 

\subsection{Relation between LCD codes with $d^\perp \ge 2$ and 
LCD codes with $d^\perp =1$}

\begin{lem}\label{lem:a}
Suppose that $q \in \{2,3\}$ and $k \ge 2$.
If there is an LCD $[n,k,d_0]$ code over $\FF_q$ with dual
distance $d^\perp \ge 2$,
then there is an LCD $[n+1,k,d]$ code over $\FF_q$ with
$d \in \{d_0, d_0+1\}$ and dual distance $d^\perp \ge 2$.
\end{lem}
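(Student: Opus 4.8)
The plan is to start from an LCD $[n,k,d_0]$ code $C$ over $\FF_q$ with $d^\perp\ge 2$, take a generator matrix $G$, and adjoin one cleverly chosen column to obtain a $k\times(n+1)$ matrix $G'=\begin{pmatrix}G & v\end{pmatrix}$, where $v\in\FF_q^k$ is a column vector. The resulting code $C'$ has length $n+1$, dimension $k$ (the first $n$ columns already have rank $k$), and minimum weight $d\in\{d_0,d_0+1\}$: indeed every nonzero codeword of $C'$ is obtained from a nonzero codeword of $C$ by appending one coordinate, so its weight is either the old weight or the old weight plus one; hence $d\ge d_0$ and $d\le d_0+1$. So the weight condition is automatic for \emph{any} choice of $v$, and the real content is to choose $v$ so that (a) $C'$ is LCD, i.e.\ $G'(G')^T=GG^T+vv^T$ is nonsingular, and (b) $d(C'^\perp)\ge 2$, i.e.\ no coordinate of $C'$ is identically zero and no two coordinates are scalar multiples of one another (equivalently, the columns of $G'$ are pairwise non-proportional and nonzero).

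First I would handle (b): since $d^\perp(C)\ge 2$, the $n$ columns of $G$ are nonzero and pairwise non-proportional, so I just need $v\ne\0$ and $v$ not proportional to any column of $G$. The number of forbidden $v$ (the zero vector together with the at most $(q-1)n$ vectors proportional to some column) is much smaller than $q^k$ once $k\ge 2$ is fixed and we are free to rescale—but rather than a counting argument I expect it is cleaner to argue directly. For (a), note $GG^T$ is already nonsingular since $C$ is LCD; I want $GG^T+vv^T$ nonsingular as well. Writing $H=GG^T$, we have $\det(H+vv^T)=\det(H)\,(1+v^TH^{-1}v)$ by the matrix determinant lemma, so the condition is exactly $v^TH^{-1}v\ne -1$ in $\FF_q$. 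Thus I need a vector $v\in\FF_q^k$ that simultaneously avoids the (at most $(q-1)n+1$) proportionality/zero constraints from (b) and avoids the quadric $\{v:v^TH^{-1}v=-1\}$ from (a).

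The key step, and the one I expect to be the main obstacle, is showing these constraints can always be met simultaneously. For $q=2$ the condition (a) becomes $v^TH^{-1}v\ne 1$, i.e.\ $v$ must lie on the quadric $v^TH^{-1}v=0$; since $H^{-1}$ is a symmetric nonsingular $k\times k$ matrix over $\FF_2$ with $k\ge 2$, this quadric (including $\0$) has enough points, and I would invoke Lemma~\ref{lem:190125-5} or the explicit structure of the simplex/quadratic forms recorded earlier, rather than re-deriving point counts. For $q=3$ one has more room: the set $\{v:v^TH^{-1}v\ne -1\}$ contains at least two of the three ``levels'' of the quadratic form plus possibly the zero vector, so it is a large fraction of $\FF_3^k$. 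In both cases the plan is: (1) enumerate the ``bad'' $v$ coming from (b); (2) show the ``good-for-(a)'' set is large enough to contain a vector outside the bad set; (3) conclude. I would lean on the basic lemmas of Section~\ref{sec:basic} (in particular the observation relating LCD codes to simplex codes, Lemma~\ref{lem:190125-5}) to supply the needed point-count on the quadric, which is where the binary case $k=2$ is tightest and must be checked with care.
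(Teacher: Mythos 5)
Your route---adjoin a single column $v$ to a generator matrix $G$ and use $\det(GG^T+vv^T)=\det(GG^T)\,(1+v^T(GG^T)^{-1}v)$---is genuinely different from the paper's proof, which instead brings a generator matrix into a normal form ($G_0G_0^T=I_k$ or a block form with $J_2$ blocks for $q=2$, via \cite{CMTQ}, and the decomposition from \cite{HS2} for $q=3$) and appends an explicitly chosen column $h_2$ or $h_3$. Your reduction of the weight claim (any one-column extension has $d\in\{d_0,d_0+1\}$) and of the LCD condition to ``find $v$ with $v^T(GG^T)^{-1}v\ne-1$'' is correct. But as written there are two genuine problems. First, your condition (b) is the condition for dual distance at least $3$, not $2$: $d^\perp\ge 2$ only forbids zero columns of a generator matrix, while proportional columns correspond to weight-$2$ dual codewords. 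Demanding that $v$ be non-proportional to every column of $G$ is therefore unnecessary, and worse, it is unsatisfiable in general: once the columns of $G$ meet all $[k]_q$ projective points (which happens for every code $C_{q,k}(m)$ with all $m_i\ge 1$, hence for arbitrarily large $n$ with $k$ fixed), every nonzero $v\in\FF_q^k$ is proportional to some column, and your counting remark ``much smaller than $q^k$'' fails because $n$ is not bounded in terms of $k$. The correct requirement for (b) is simply $v\ne\0_k$.

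Second, the decisive step---existence of $v\ne\0$ with $v^T(GG^T)^{-1}v\ne-1$---is exactly what you leave open, and Lemma~\ref{lem:190125-5} cannot supply it: that lemma concerns weights of $C_{q,k}(m)$ via a symmetric design and says nothing about point counts on quadrics. The needed facts are elementary but must be stated. Over $\FF_2$ the map $v\mapsto v^TMv$ is \emph{linear} for symmetric $M$ (it equals $\sum_i M_{ii}v_i$, since the off-diagonal terms cancel in pairs), so $\{v : v^T(GG^T)^{-1}v=0\}$ is a subspace of dimension at least $k-1\ge 1$ and contains a nonzero vector. Over $\FF_3$ the map $v\mapsto v^T(GG^T)^{-1}v$ is a nondegenerate quadratic form in $k\ge 2$ variables, hence represents every nonzero scalar; in particular some $v\ne\0$ has value $1\ne -1$. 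With these two repairs (drop the non-proportionality demand; insert the $\FF_2$-linearity and $\FF_3$-universality arguments) your proof is complete, and it is arguably more uniform than the paper's case analysis, which relies on the structural results of \cite{CMTQ} and \cite{HS2}.
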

\begin{proof}
 Suppose that there is an LCD $[n,k,d_0]$ code $C_0$ over $\FF_q$ with
 dual distance $d(C_0^\perp) \ge 2$.

Suppose that $q=2$.
Recall that a binary code is called {\em even} if the weights
of all codewords are even and 
a binary code which is not even is called {\em odd}.
If $C_0$ is odd,
then there is a generator matrix $G_0$ of $C_0$
such that $G_0G_0^T=I_k$~\cite[Theorem~3]{CMTQ}.
If $C_0$ is even,
then $k$ is even and there is a generator matrix $G'_0$ of $C_0$ such that
\[
G'_0{G'_0}^T=
\begin{pmatrix}
J_2 && \\
& \ddots & \\
&& J_2
\end{pmatrix},
\]
where
$J_2=
\begin{pmatrix}
0 & 1 \\
1 & 0
\end{pmatrix}
$~\cite[Theorem~5]{CMTQ}.

Suppose that $q=3$.
From the proof of~\cite[Proposition~4]{HS2},
$C_0$ has generator matrix of form
$
G_0=
\left(\begin{array}{ccc}
&x& \\
&G_1& \\
\end{array}\right)$,
satisfying  that
$x \cdot x \neq 0$ and $xG_1^T=\0_{k-1}$.
The code $C_1$ with generator matrix $G_1$ is a ternary LCD $[n,k-1]$ code.

\begin{itemize}
\item $k=2$:
$G_1$ is regarded as a codeword $x^* \in C_1$ with $x^* \cdot x^* \neq 0$,
since $C_1$ is LCD.

\item $k \ge 3$: Since $C_1$ is LCD,
$C_1$ has generator matrix of form
$
G'_1=
\left(\begin{array}{ccc}
&x'& \\
&G_2'& \\
\end{array}\right)$,
satisfying that
$x' \cdot x' \neq 0$ and $x'{G_2'}^T=\0_{k-2}$.
Since both matrices $G_1$ and $G'_1$ are generator matrices of $C_1$,
\[
G'_0=
\left(\begin{array}{ccc}
&x& \\
&G'_1& \\
\end{array}\right)
=
\left(\begin{array}{ccc}
&x& \\
&x'& \\
&G'_2& \\
\end{array}\right)
\]
is also a generator matrix of $C_0$.
Since $x\in C_1^\perp$ and $G'_1$ is a generator matrix of $C_1$,
we have $x \cdot x'=0$ and $x{G'_2}^T=\0_{k-2}$.
\end{itemize}

Finally, consider the following $k \times (n+1)$ $\FF_q$-matrix $G$:
\[G=
\begin{cases}
\begin{pmatrix}
&G_0 & h_2^T&
\end{pmatrix}
 & \text{if } q=2 \text{ and } C_0 \text{ is odd}, \\
\begin{pmatrix}
&G'_0 & h_2^T&
\end{pmatrix}
 & \text{if } q=2 \text{ and } C_0 \text{ is even}, \\
\begin{pmatrix}
&G_0 & h_3^T&
\end{pmatrix}
 & \text{if } q=3 \text{ and } k=2, \\
\begin{pmatrix}
&G'_0 & h_3^T&
\end{pmatrix}
 & \text{if } q=3 \text{ and } k \ge 3, \\
\end{cases}
\]
where
\begin{align*}
h_2&=
\begin{cases}
(1,1) & \text{if } k=2, \\
(1,1,\0_{k-2}) & \text{if } k \ge 3, \\
\end{cases}
\\
h_3&=
\begin{cases}
(1,0) & \text{if } k=2 \text{ and } x \cdot x=1, \\
(0,1) & \text{if } k=2, x \cdot x=2 \text{ and } x^* \cdot x^*=1, \\
(1,1) & \text{if } k=2 \text{ and } x \cdot x= x^* \cdot x^*=2, \\
(1,0,\0_{k-2}) & \text{if } k \ge 3 \text{ and } x \cdot x=1, \\
(0,1,\0_{k-2}) & \text{if } k \ge 3, x \cdot x=2 \text{ and } x' \cdot x'=1, \\
(1,1,\0_{k-2}) & \text{if } k \ge 3 \text{ and } x \cdot x=x' \cdot x'=2.
\end{cases}
\end{align*}
It is not difficult to show that $GG^T$ is nonsingular.
Hence, the code with generator matrix $G$ is an LCD $[n+1,k,d]$ code over $\FF_q$ with
$d \in \{d_0, d_0+1\}$ and dual distance $d^\perp \ge 2$.
\end{proof}

\begin{prop}\label{prop:dd1}
Suppose that $q \in \{2,3\}$, $k \ge 2$ and $d_0$ is a positive integer.
If there is no LCD $[n,k,d]$ code over $\FF_q$ with
$d \ge d_0$ and dual distance $d^\perp \ge 2$,
then there is no LCD $[n,k,d]$ code over $\FF_q$ with
$d \ge d_0$ and dual distance $1$.
\end{prop}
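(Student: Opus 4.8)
The plan is to prove the contrapositive: assuming there is an LCD $[n,k,d]$ code $C$ over $\FF_q$ with $d \ge d_0$ and dual distance $1$, I will produce an LCD $[n,k,d']$ code over $\FF_q$ with $d' \ge d_0$ and dual distance $d^\perp \ge 2$, contradicting the hypothesis of the proposition.

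First I would record the elementary observation that, for any code $C$ over $\FF_q$, one has $d(C^\perp) \ge 2$ if and only if no coordinate of $C$ is identically zero: a weight-one codeword of $C^\perp$ is precisely a nonzero scalar multiple $\alpha e_i$ of a standard basis vector, and $\alpha e_i \in C^\perp$ says exactly that the $i$-th coordinate vanishes on all of $C$. Starting from $C$, delete all coordinates that are identically zero on $C$; call the resulting code $C_0$, of length $m$. Since $C$ has dual distance $1$, at least one such coordinate exists, so $m < n$, and since $\dim C_0 = k \ge 1$ we have $m \ge 1$, so $C_0$ is a genuine code. Puncturing on identically-zero coordinates alters neither the dimension nor the weight of any codeword, so $C_0$ is an $[m,k,d]$ code; moreover a generator matrix of $C_0$ is obtained from one of $C$ by deleting zero columns, which leaves $GG^T$ unchanged, so $C_0$ is LCD by Proposition~\ref{prop:lcd}. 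By construction $C_0$ has no identically-zero coordinate, hence $d^\perp(C_0) \ge 2$.

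It then remains to build $C_0$ back up to length $n$. Since $q \in \{2,3\}$ and $k \ge 2$, Lemma~\ref{lem:a} applied to the LCD $[m,k,d]$ code $C_0$ with $d^\perp \ge 2$ yields an LCD $[m+1,k,d_1]$ code with $d_1 \ge d$ and $d^\perp \ge 2$; iterating this $n-m \ge 1$ times gives an LCD $[n,k,d']$ code over $\FF_q$ with $d' \ge d \ge d_0$ and $d^\perp \ge 2$, which is the desired contradiction. I do not expect a genuine obstacle here: the argument simply combines the trivial ``puncture the identically-zero coordinates'' reduction with the already-established Lemma~\ref{lem:a}. The only points needing a little care are the characterization of dual distance $\ge 2$ via identically-zero coordinates, the remark that this puncturing preserves the LCD property, and the bookkeeping that $1 \le m < n$ so that Lemma~\ref{lem:a} is legitimately applicable at each of the $n-m$ steps.
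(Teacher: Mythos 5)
Your proposal is correct and follows essentially the same route as the paper: delete the identically-zero coordinates (zero columns of a generator matrix) to obtain an LCD $[n-\ell,k,d]$ code with $d^\perp \ge 2$, then lengthen it back to length $n$ by repeated application of Lemma~\ref{lem:a}, which keeps the minimum weight at least $d \ge d_0$ and the dual distance at least $2$. The extra details you supply (why dual distance $\ge 2$ is equivalent to having no zero coordinate, and why puncturing zero columns preserves the LCD property via $GG^T$) are exactly the points the paper leaves implicit.
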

\begin{proof}
Suppose that there is an LCD $[n,k,d]$ code $C$ over $\FF_q$ with
$d \ge d_0$ and dual distance $1$.
By deleting all zero columns of a generator matrix of $C$,
an LCD $[n-\ell,k,d]$ code over $\FF_q$ with dual distance $d^\perp \ge 2$
is constructed for some positive integer $\ell$.
Then an LCD $[n,k,d]$ code over $\FF_q$
with $d \ge d_0$ and dual distance $d^\perp \ge 2$ is constructed
by Lemma~\ref{lem:a}.
\end{proof}

\subsection{LCD codes related to the simplex codes}

In this subsection, we suppose that $(q,k_0) \in \{(2,3),(3,2)\}$.
We define the $k \times [k]_q$ $\mathbb{F}_q$-matrices $S_{q,k}$
by inductive constructions as follows:
\begin{align*}
S_{q,1}&=
\begin{pmatrix}
1
\end{pmatrix}, \\
S_{2,k}&=
\left(
\begin{array}{ccc}
S_{2,k-1} & \0_{k-1}^T & S_{2,k-1} \\ 
\0_{[k-1]_2} & 1 & \1_{[k-1]_2}
\end{array}
\right),  \\
S_{3,k}&=
\left(
\begin{array}{cccc}
S_{3,k-1} & \0_{k-1}^T & S_{3,k-1} & S_{3,k-1} \\ 
\0_{[k-1]_3} & 1 & \1_{[k-1]_3} & 2\1_{[k-1]_3}
\end{array}
\right),
\end{align*}
for $k \ge 2$ (see~\eqref{eq:k} for the notation $[k]_q$).
The matrix $S_{q,k}$ is a generator matrix of the simplex
$[[k]_q,k,q^{k-1}]$ code over $\FF_q$.
The simplex $[[k]_q,k,q^{k-1}]$ code is a constant weight code~\cite[Theorem~2.7.5]{HP}.
Moreover, 
the simplex $[[k]_q,k,q^{k-1}]$ code is self-orthogonal if $k \ge k_0$~\cite[Theorems~1.4.8~(ii) and 1.4.10~(i)]{HP}.

Let $h_{q,k,i}$ be the $i$-th column of the matrix $S_{q,k}$.
For a vector $m=(m_1,m_2,\ldots,m_{[k]_q}) \in \mathbb{Z}_{\ge 0}^{[k]_q}$, we define a $k \times \sum_{i=1}^{[k]_q}m_i$ $\mathbb{F}_q$-matrix $G_{q,k}(m)$, which consists of $m_i$ columns $h_{q,k,i}$ for each $i$ as follows:
\begin{equation}\label{eq:Gqkm}
G_{q,k}(m)=
\left(
 h_{q,k,1} \cdots h_{q,k,1} h_{q,k,2} \cdots h_{q,k,2}
\cdots h_{q,k,[k]_q} \cdots h_{q,k,[k]_q}
\right).
\end{equation}
Here we remark that $m_i=0$ means no column of $G_{q,k}(m)$ is $h_{q,k,i}$.
Throughout this paper,
we denote by $C_{q,k}(m)$ the code with generator matrix $G_{q,k}(m)$.
For any $[n,k]$ code $C$ over $\mathbb{F}_q$ with
dual distance $d^\perp \ge 2$,
there is a vector $m=(m_1,m_2,\ldots,m_{[k]_q}) \in
\mathbb{Z}_{\ge 0}^{[k]_q}$
such that $C \cong C_{q,k}(m)$ and
$n=\sum_{i=1}^{[k]_q}m_i$.


It is well known that 
any $[[k]_q\cdot s,k,q^{k-1}s]$ code over $\mathbb{F}_q$ is
equivalent to $C_{q,k}(s\1_{[k]_q})$
if $(q,k_0) \in \{(2,3),(3,2)\}$, 
$k \ge k_0$ and $s$ is a positive integer~\cite{Bonisoli}
(see also~\cite{Maruta}).
Hence, we have the following:

\begin{lem}\label{lem:n0}
If $(q,k_0) \in \{(2,3),(3,2)\}$, 
$k \ge k_0$ and $n \equiv 0 \pmod{[k]_q}$, then
there is no LCD $[n,k,g_q(n,k)]$ code over $\mathbb{F}_q$.
\end{lem}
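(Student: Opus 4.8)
\textbf{Proof proposal for Lemma~\ref{lem:n0}.}

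The plan is to argue by contradiction, exploiting the rigidity result of Bonisoli that any $[[k]_q\cdot s,k,q^{k-1}s]$ code over $\FF_q$ is equivalent to the replicated simplex code $C_{q,k}(s\1_{[k]_q})$, together with the fact that the simplex code is self-orthogonal once $k \ge k_0$. First I would write $n = [k]_q \cdot s$ for a positive integer $s$, using the hypothesis $n \equiv 0 \pmod{[k]_q}$ (the case $n=0$ is vacuous, and for $n>0$ we may assume $s \ge 1$). Then I would compute $g_q(n,k)$ for such $n$: since $n = [k]_q \cdot s = s\sum_{i=0}^{k-1} q^i$ and $\sum_{i=0}^{k-1}\lceil q^{k-1}s / q^i \rceil = \sum_{i=0}^{k-1} q^{k-1-i}s = s[k]_q = n$, the value $d = q^{k-1}s$ satisfies the Griesmer constraint in~\eqref{eq:g} with equality, so $g_q(n,k) \ge q^{k-1}s$; and a short check that $d = q^{k-1}s+1$ violates the constraint gives $g_q(n,k) = q^{k-1}s$. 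Thus an LCD $[n,k,g_q(n,k)]$ code would be an LCD $[[k]_q\cdot s, k, q^{k-1}s]$ code meeting the Griesmer bound.

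Next I would invoke Bonisoli's theorem: any such Griesmer-optimal code is equivalent to $C_{q,k}(s\1_{[k]_q})$, which has generator matrix $(S_{q,k})^{(s)}$, the $s$-fold juxtaposition of the simplex generator matrix $S_{q,k}$. Since $k \ge k_0$, the simplex code is self-orthogonal, i.e. $S_{q,k}S_{q,k}^T = O$, hence $(S_{q,k})^{(s)}\bigl((S_{q,k})^{(s)}\bigr)^T = s\cdot S_{q,k}S_{q,k}^T = O$. By Proposition~\ref{prop:lcd} this means $C_{q,k}(s\1_{[k]_q})$ is \emph{not} LCD — indeed it is self-orthogonal. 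Because the LCD property is preserved under code equivalence (a monomial change of coordinates conjugates $GG^T$ by a diagonal-times-permutation matrix up to the relevant scalings, and over $\FF_2,\FF_3$ one checks singularity is preserved), any code equivalent to it fails to be LCD. This contradicts the assumed existence of an LCD $[n,k,g_q(n,k)]$ code, completing the proof.

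The only mild subtlety — and the step I would be most careful about — is the interface between \emph{equivalence} and the LCD property when $q = 3$: a monomial transformation $P$ sends a generator matrix $G$ to $UGP$ for some nonsingular $U$, and one needs $(UGP)(UGP)^T = UG(PP^T)G^TU^T$ to be singular whenever $GG^T$ is. Here $PP^T$ is a diagonal matrix whose entries are squares of nonzero field elements, which over $\FF_3$ are all equal to $1$, so $PP^T = I_n$ and singularity is genuinely preserved; over $\FF_2$ this is immediate. (Equivalently, one simply notes that self-orthogonality — hence non-LCD-ness — is an equivalence invariant, sidestepping the computation entirely.) Everything else is bookkeeping: the Griesmer evaluation of $g_q(n,k)$ at multiples of $[k]_q$, and citing Bonisoli~\cite{Bonisoli} and the self-orthogonality of the simplex code~\cite[Theorems~1.4.8~(ii) and 1.4.10~(i)]{HP}.
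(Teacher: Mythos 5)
Your proof is correct and follows essentially the same route as the paper, which derives Lemma~\ref{lem:n0} directly from Bonisoli's theorem that any $[[k]_q\cdot s,k,q^{k-1}s]$ code is equivalent to $C_{q,k}(s\1_{[k]_q})$, combined with the self-orthogonality of the simplex code for $k \ge k_0$. Your extra checks (the Griesmer evaluation $g_q([k]_q s,k)=q^{k-1}s$ and the invariance of the LCD property under monomial equivalence for $q\in\{2,3\}$) are exactly the details the paper leaves implicit.
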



\begin{lem}\label{lem:190228-1}
Suppose that $(q,k_0) \in \{(2,3),(3,2)\}$, 
$k \ge k_0$, $s$ is a positive integer and 
$m=(m_1,m_2,\ldots,m_{[k]_q}) \in \mathbb{Z}_{\ge 0}^{[k]_q}$.
If $C_{q,k}(m)$ is an LCD
$[\sum_{i=1}^{[k]_q}m_i,k,d_0]$
code over $\FF_q$, then the code $C$ with generator matrix
\[
\left(\begin{array}{cc}
 S_{q,k}^{(s)} &G_{q,k}(m)
       \end{array}\right)
\]
is an LCD $[n,k,d]$ code, where
$
n=\sum_{i=1}^{[k]_q}m_i+[k]_q \cdot s$  and  $d=d_0+q^{k-1}s$.
\end{lem}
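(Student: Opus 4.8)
The plan is to split the argument into two independent parts: first verify the stated parameters $n$ and $d$ of the code $C$, then verify that $C$ is LCD by checking that $GG^T$ is nonsingular, where $G = \left(\begin{array}{cc} S_{q,k}^{(s)} & G_{q,k}(m)\end{array}\right)$. The length claim $n = \sum_{i=1}^{[k]_q} m_i + [k]_q \cdot s$ is immediate: $G_{q,k}(m)$ contributes $\sum_i m_i$ columns and each of the $s$ copies of $S_{q,k}$ contributes $[k]_q$ columns. For the minimum weight, I would use that the simplex $[[k]_q,k,q^{k-1}]$ code is a constant weight code (cited from~\cite[Theorem~2.7.5]{HP}), so every nonzero codeword of $C_{q,k}(\1_{[k]_q})$ has weight exactly $q^{k-1}$, hence every nonzero codeword in the $S_{q,k}^{(s)}$ block has weight exactly $q^{k-1}s$. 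Adding this to the weight contributed by the $G_{q,k}(m)$ block (which is at least $d_0$ on nonzero codewords, with equality attained since $C_{q,k}(m)$ has minimum weight $d_0$), the minimum weight of $C$ is exactly $d_0 + q^{k-1}s$. I would phrase this via the generator-matrix rows: for $u \in \FF_q^k \setminus \{\0_k\}$, $\wt(uG) = \wt(u S_{q,k}^{(s)}) + \wt(u G_{q,k}(m)) = q^{k-1}s + \wt(uG_{q,k}(m))$, then take the minimum over $u$.

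The LCD part is the main point. Compute
\[
GG^T = S_{q,k}^{(s)} \left(S_{q,k}^{(s)}\right)^T + G_{q,k}(m)G_{q,k}(m)^T = s\, S_{q,k}S_{q,k}^T + G_{q,k}(m)G_{q,k}(m)^T.
\]
Here I would invoke that the simplex code is self-orthogonal for $k \ge k_0$ (cited from~\cite[Theorems~1.4.8~(ii) and 1.4.10~(i)]{HP}), which gives $S_{q,k}S_{q,k}^T = O$ over $\FF_q$. Therefore $GG^T = G_{q,k}(m)G_{q,k}(m)^T$, and the latter is nonsingular precisely because $C_{q,k}(m)$ is assumed to be LCD (Proposition~\ref{prop:lcd}). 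Hence $GG^T$ is nonsingular and $C$ is LCD.

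I expect the only genuine subtlety to be the self-orthogonality step: one must be careful that $S_{q,k}S_{q,k}^T = O$ holds in $\FF_q$ (not merely that rows have self-inner-product zero), but this is exactly what self-orthogonality of the simplex code means, and it is already quoted in the excerpt for $k \ge k_0$ with $(q,k_0) \in \{(2,3),(3,2)\}$, which are the standing hypotheses. Everything else — the block decomposition of $GG^T$, the additivity of weights across the two column blocks, and the constant-weight property of the simplex block — is routine. A minor point worth stating explicitly is that the rank of $G$ is $k$ (so $C$ really is a $[n,k,d]$ code): this follows since $GG^T$ is nonsingular, forcing $\rank G = k$.
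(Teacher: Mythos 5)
Your proposal is correct and follows essentially the same route as the paper: self-orthogonality of the simplex code (for $k \ge k_0$) makes $GG^T = G_{q,k}(m)G_{q,k}(m)^T$, which is nonsingular since $C_{q,k}(m)$ is LCD, and the constant-weight property of the simplex block pins down the minimum weight as exactly $d_0 + q^{k-1}s$. The only cosmetic difference is that the paper first states the general juxtaposition fact (self-orthogonal plus LCD gives LCD with minimum weight at least $d_1+d_2$) and then exhibits a codeword of the exact weight, whereas you compute $\wt(uG) = q^{k-1}s + \wt(uG_{q,k}(m))$ directly; these are the same argument.
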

\begin{proof}
Let $G_1$ (resp.\ $G_2$) be a generator matrix of 
a self-orthogonal $[n_1,k,d_1]$ code (resp.\ an LCD $[n_2,k,d_2]$ code)
over $\mathbb{F}_q$.
It is trivial that the code with generator matrix
$
\begin{pmatrix}
& G_1 & G_2& 
\end{pmatrix}
$ is an LCD $[n_1+n_2,k]$ code with minimum weight at least $d_1+d_2$.

The code $D$ with generator matrix $S_{q,k}^{(s)}$
is a self-orthogonal $[[k]_q\cdot s,k,q^{k-1}s]$ code.
From the assumption, $C_{q,k}(m)$ is an LCD
$[\sum_{i=1}^{[k]_q}m_i,k,d_0]$ code.
By the above argument, $C$ is 
an LCD $[\sum_{i=1}^{[k]_q}m_i+[k]_q \cdot s,k]$
code with minimum weight at least $d_0+q^{k-1}s$.
Since $D$ is a constant  weight code,
there is a codeword of weight $d_0+q^{k-1}s$ in $C$.
The result follows.
\end{proof}



Now we give definitions concerning symmetric designs.
A $2$-$(v,\ell,\lambda)$ {\em design} $\cD$ is a set $X$ of $v$ {\em points}
together
with a collection of $\ell$-subsets of $X$ called {\em blocks}
such that every $2$-subset of $X$ is contained in exactly $\lambda$ blocks.
The number of blocks that contain a given
point is traditionally denoted by $r$, and the total number of
blocks is denoted by $b$.
The $b \times v$ {\em incidence matrix} $A=(a_{i,j})$ of $\cD$ is defined by
$a_{i,j}=1$ if the $j$-th point is contained in the $i$-th block
and $a_{i,j}=0$ otherwise.
A $2$-$(v,\ell,\lambda)$ design $\cD$ is called {\em symmetric} if $v=b$.
It is known that $\cD$ is symmetric if and only if $r=\ell$.
For undefined terms concerning designs, see e.g.~\cite{CL}.

Suppose that there is a symmetric $2$-$(v,\ell,\lambda)$ design $\cD$.
Let $A$ be the incidence matrix of $\cD$.
Let $m=(m_1,m_2,\ldots,m_{v}) $ be a vector of  $\mathbb{Z}_{\ge 0}^{v}$.
If each entry of the $v \times 1$ $\ZZ$-matrix $A m^T$ is 
at least $d$,
then
\begin{equation}\label{eq:1}
\frac{\ell d-\lambda \sum_{j=1}^v m_j}{\ell-\lambda} \le m_i \le
\sum_{j=1}^v m_j-\frac{v-\ell}{\ell-\lambda}d,
\end{equation}
for each $i \in \{1,2,\ldots,v\}$~\cite[Lemma~3.1]{AHS}.
The following lemma is a key idea for
the determination of possible vectors $m$ for
the codes $C_{q,k}(m)$
with generator matrices $G_{q,k}(m)$ of form~\eqref{eq:Gqkm}.

\begin{lem}\label{lem:190125-5}
Suppose that $(q,k_0) \in \{(2,3),(3,2)\}$ and $k \ge k_0$.
If the code $C_{q,k}(m)$ has minimum weight at least $d$, then
\begin{equation}\label{eq:mi}
 qd-(q-1)n \le m_i \le n-\frac{q^{k-1}-1}{(q-1)q^{k-2}}d,
\end{equation}
for each $i \in \{1,2,\ldots,[k]_q\}$, where
 $m=(m_1,m_2,\ldots,m_{[k]_q}) \in \mathbb{Z}_{\ge 0}^{[k]_q}$
 and $n=\sum_{i=1}^{[k]_q}m_i$.
\end{lem}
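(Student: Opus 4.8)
The plan is to recognize the columns of $S_{q,k}$ as the points of a symmetric $2$-design---the complement of the hyperplane design $\mathrm{PG}(k-1,q)$---and then feed the appropriate parameters into the inequality~\eqref{eq:1}. Recall that the columns $h_{q,k,1},\ldots,h_{q,k,[k]_q}$ of $S_{q,k}$ are (up to scalar) representatives of the $[k]_q=\frac{q^k-1}{q-1}$ points of $\mathrm{PG}(k-1,q)$, and a codeword of $C_{q,k}(m)$ determined by a nonzero $a\in\FF_q^k$ has weight $\sum_{i\,:\,a\cdot h_{q,k,i}\neq 0} m_i$; equivalently, the positions $i$ with $a\cdot h_{q,k,i}=0$ form a hyperplane of $\mathrm{PG}(k-1,q)$, which contains exactly $[k-1]_q=\frac{q^{k-1}-1}{q-1}$ points. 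So the complements of the hyperplanes form the blocks of a symmetric $2$-$(v,\ell,\lambda)$ design with $v=[k]_q$, $\ell=v-[k-1]_q=q^{k-1}$, and $\lambda$ the number of hyperplanes missing two fixed points, which is $[k]_q-2[k-1]_q+[k-2]_q=q^{k-2}$ (using that each pair of points lies on $[k-1]_{q}$... one checks $\lambda=q^{k-2}(q-1)$ is the standard value; I would verify the exact constant by the identity $\lambda(v-1)=\ell(\ell-1)$). The weight of the codeword indexed by $a$ is then exactly $\sum_{j\in B}m_j$ where $B$ is the block (complement of $\ker a$), and as $a$ ranges over nonzero vectors of $\FF_q^k$ every block arises. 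Hence ``$C_{q,k}(m)$ has minimum weight at least $d$'' is exactly the statement that every entry of $Am^T$ is at least $d$, where $A$ is the incidence matrix of this symmetric design.

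With that translation in hand, I would simply invoke~\eqref{eq:1} (the cited~\cite[Lemma~3.1]{AHS}) with $v=[k]_q$, $\ell=q^{k-1}$, $\lambda=q^{k-2}(q-1)$, and $\sum_j m_j=n$. The left bound gives
\[
m_i\ \ge\ \frac{\ell d-\lambda n}{\ell-\lambda}=\frac{q^{k-1}d-q^{k-2}(q-1)n}{q^{k-1}-q^{k-2}(q-1)}=\frac{q^{k-1}d-q^{k-2}(q-1)n}{q^{k-2}}=qd-(q-1)n,
\]
which is the desired lower bound. The right bound gives
\[
m_i\ \le\ n-\frac{v-\ell}{\ell-\lambda}d=n-\frac{[k]_q-q^{k-1}}{q^{k-2}}d=n-\frac{[k-1]_q}{q^{k-2}}d=n-\frac{q^{k-1}-1}{(q-1)q^{k-2}}d,
\]
which is exactly the stated upper bound. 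So once the design-theoretic identification is made, the proof is a one-line substitution plus arithmetic simplification of the two fractions.

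The only genuine content---and the step I would be most careful about---is justifying the design identification and pinning down $\lambda$ correctly; in particular checking that for $k\ge k_0$ (so $k\ge 2$) the incidence structure really is a $2$-design, i.e.\ that $\lambda$ is independent of the chosen pair of points, and confirming via $\lambda(v-1)=\ell(\ell-1)$ that $\lambda=q^{k-2}(q-1)$ rather than an off-by-a-factor value. A subtlety worth a sentence: the hypothesis on the code is only ``minimum weight at least $d$,'' but since it suffices that \emph{every} block-sum $\sum_{j\in B}m_j$ be $\ge d$ and every such sum equals the weight of the corresponding nonzero codeword, the hypothesis transfers verbatim; no appeal to the minimum weight being \emph{equal} to $d$, nor to $C_{q,k}(m)$ being LCD, is needed here. (The condition $k\ge k_0$ is also more than enough to keep $\ell-\lambda=q^{k-2}>0$, so the division in~\eqref{eq:1} is legitimate.)
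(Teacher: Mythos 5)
Your proposal is correct and follows essentially the same route as the paper: identify the supports of the nonzero simplex codewords (equivalently, the complements of hyperplanes of $\mathrm{PG}(k-1,q)$) with the blocks of a symmetric $2$-$([k]_q,q^{k-1},(q-1)q^{k-2})$ design, observe that the minimum-weight hypothesis says every entry of $Am^T$ is at least $d$, and substitute into~\eqref{eq:1}; the paper merely cites the Assmus--Mattson theorem for the design where you verify it directly. The only blemish is the garbled intermediate equality $[k]_q-2[k-1]_q+[k-2]_q=q^{k-2}$ (the correct value is $(q-1)q^{k-2}$), but since that is the value you actually substitute, nothing is affected.
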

\begin{proof}
By the Assmus--Mattson theorem, the supports of all nonzero codewords
in the simplex $[[k]_q,k,q^{k-1}]$ code over $\FF_q$ yield a
symmetric $2$-$([k]_q,q^{k-1},(q-1)q^{k-2})$ design $\cD_{q,k}$ if $k \ge k_0$.
Let $A$ be the incidence matrix of $\cD_{q,k}$.
Note that the simplex $[[k]_q,k,q^{k-1}]$ code is a constant weight code.
Thus, by considering the construction of $C_{q,k}(m)$,
the weight of any nonzero codeword of $C_{q,k}(m)$
is written as one of the entries of $A(m_1,m_2,\ldots,m_v)^T$.
Since $C_{q,k}(m)$ has minimum weight at least $d$,
each entry of $A(m_1,m_2,\ldots,m_v)^T$ is at least $d$.
From~\eqref{eq:1}, we have the
desired conclusion.
\end{proof}

%


\begin{rem}
Recently, quaternary Hermitian LCD codes with large minimum weights have been 
studied by considering simplex codes in~\cite{AHS} and~\cite{LLGF}.
Lemma~\ref{lem:190228-1}
is an $\FF_q$-analogy ($q\in\{2,3\}$) of~\cite[Lemmas~3.2 and 3.3]{LLGF}
(see also~\cite{FLFR} and~\cite{LN} for $q=2$).
Lemma~\ref{lem:190125-5}
is an $\FF_q$-analogy ($q\in\{2,3\}$) of~\cite[Lemma~3.2]{AHS}.
Lemma~\ref{lem:char} is also an
$\FF_q$-analogy ($q\in\{2,3\}$) of~\cite[Theorem~3.4]{AHS}.
\end{rem}

\section{Characterization of LCD codes}
\label{sec:char}

Throughout this section, 
we assume that $(q,k_0) \in \{(2,3),(3,2)\}$ and $qd-(q-1)n \ge 1$.
In this section, we give a 
characterization of LCD codes with
large minimum weights (Theorem~\ref{thm:main}),
which is the main result of this paper.
In addition, 
we modify Theorem~\ref{thm:main} to the form to be used easily
under some assumption~\eqref{eq:as} for our study 
in Sections~\ref{sec:2} and~\ref{sec:3}
 (Theorem~\ref{thm:main2}).

\subsection{Theorem~\ref{thm:main} and its proof}



%
%

\begin{lem}\label{lem:char}
Suppose that $(q,k_0) \in \{(2,3),(3,2)\}$,
$k \ge k_0$ and $qd-(q-1)n \ge 1$.
Let $r_{q,n,k,d}$ denote the integer defined in~\eqref{eq:r}.
\begin{enumerate}
\item If $q \cdot r_{q,n,k,d} < k$,
then there is no LCD $[n,k,d]$ code over $\mathbb{F}_q$
with dual distance $d^\perp \ge 2$.
\item
If there is an LCD $[n,k,d]$ code $C$ over $\FF_q$ with
dual distance $d(C^\perp) \ge 2$,
then there is an LCD $[n,k,d]$ code $C'$ such that
$C \cong C'$ and 
$C'$ has generator matrix of the following form:  
\begin{equation}\label{eq:G}
G=
\begin{pmatrix}
& S_{q,k}^{(qd-(q-1)n)} &  G_0 &
\end{pmatrix},
\end{equation}
where 
$G_0$ is a generator matrix of some LCD
$[q \cdot r_{q,n,k,d},k,(q-1)r_{q,n,k,d}]$ code.
 \end{enumerate}
\end{lem}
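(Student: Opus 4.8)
\textbf{Proof proposal for Lemma~\ref{lem:char}.}

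The plan is to use Lemma~\ref{lem:190125-5} as the engine for part~(i) and then to leverage the structure it forces, together with Bonisoli's theorem, to extract the self-orthogonal simplex part in part~(ii). First I would recall that any $[n,k]$ code $C$ over $\FF_q$ with dual distance $d^\perp \ge 2$ is equivalent to some $C_{q,k}(m)$ with $n=\sum_{i=1}^{[k]_q} m_i$ and $m \in \ZZ_{\ge 0}^{[k]_q}$, so without loss of generality $C = C_{q,k}(m)$. Since $C$ has minimum weight $d$, Lemma~\ref{lem:190125-5} applies and gives $m_i \ge qd - (q-1)n$ for every $i$. Under the hypothesis $qd-(q-1)n \ge 1$ this says every coordinate column $h_{q,k,i}$ of the simplex matrix $S_{q,k}$ appears at least $qd-(q-1)n$ times in $G_{q,k}(m)$.

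For part~(i): summing the lower bound over all $[k]_q$ coordinates gives $n = \sum_i m_i \ge [k]_q\,(qd-(q-1)n)$, and a short rearrangement turns this into a lower bound on $r_{q,n,k,d} = q^{k-1}n - [k]_q d$; specifically $q\cdot r_{q,n,k,d} = q^k n - q[k]_q d = (q-1)[k]_q\bigl(\tfrac{q^k n}{(q-1)[k]_q} - \tfrac{q}{q-1}d\bigr)$, and using $[k]_q = \frac{q^k-1}{q-1}$ one checks that $q\cdot r_{q,n,k,d} \ge k$ whenever a code actually exists. Concretely, if an LCD $[n,k,d]$ code with $d^\perp\ge 2$ exists then $m\neq \0$ and the inequality $m_i \ge qd-(q-1)n\ge 1$ forces $n \ge [k]_q(qd-(q-1)n)$, which one manipulates to show $q\cdot r_{q,n,k,d}\ge k$; contrapositively, $q\cdot r_{q,n,k,d} < k$ rules out existence. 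I would present this as the arithmetic identity $q\cdot r_{q,n,k,d} = q^{k-1}\bigl(n - [k]_q(qd-(q-1)n)\bigr) + (\text{nonnegative terms})$ — I expect the precise bookkeeping to be the one genuinely fiddly calculation, but it is elementary once the substitution $[k]_q=\frac{q^k-1}{q-1}$ is made.

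For part~(ii): start from $C\cong C_{q,k}(m)$ with $m_i \ge qd-(q-1)n =: s$ for all $i$. Write $m = s\1_{[k]_q} + m'$ with $m' \in \ZZ_{\ge 0}^{[k]_q}$. Then $G_{q,k}(m)$ can be column-permuted so that it equals $\bigl(S_{q,k}^{(s)} \mid G_{q,k}(m')\bigr)$ — this is exactly the decomposition underlying Lemma~\ref{lem:190228-1}, read in reverse. Since $k\ge k_0$, the block $S_{q,k}^{(s)}$ generates a self-orthogonal $[[k]_q s, k, q^{k-1}s]$ code, which is a constant-weight code. Because $C$ is LCD, Proposition~\ref{prop:lcd} says $GG^T$ is nonsingular; since $S_{q,k}^{(s)}(S_{q,k}^{(s)})^T = O$ by self-orthogonality, $GG^T = G_0 G_0^T$ where $G_0 = G_{q,k}(m')$, so $G_0 G_0^T$ is nonsingular and the code $C_0$ with generator matrix $G_0$ is LCD. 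Counting columns: $G_0$ has $\sum_i m'_i = n - [k]_q s$ columns; a direct substitution $s = qd-(q-1)n$ and $[k]_q = \frac{q^k-1}{q-1}$ gives $n - [k]_q s = q^k n - q[k]_q d = q\cdot r_{q,n,k,d}$ (the same identity as above). For the minimum weight of $C_0$: every codeword of $C$ has weight equal to $q^{k-1}s$ (the constant contribution from the simplex blocks) plus the weight of the corresponding codeword of $C_0$, so $d(C_0) = d - q^{k-1}s$, and substituting $s$ yields $d - q^{k-1}(qd-(q-1)n) = q^k n \cdot \tfrac{q-1}{\ ?\ }$ — more cleanly, $d - q^{k-1}s = (q-1)(q^{k-1}n - [k]_q d) = (q-1)r_{q,n,k,d}$ after collecting terms. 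Thus $G$ has the asserted form~\eqref{eq:G} with $G_0$ a generator matrix of an LCD $[q\cdot r_{q,n,k,d}, k, (q-1)r_{q,n,k,d}]$ code, and applying the equivalence (column permutation, which is a monomial transformation) produces the desired $C' \cong C$.

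The main obstacle I anticipate is purely notational rather than conceptual: verifying that the column count $n - [k]_q s$ and the weight drop $d - q^{k-1}s$ collapse exactly to $q\cdot r_{q,n,k,d}$ and $(q-1)r_{q,n,k,d}$ respectively. Both reduce to the single algebraic identity $q^{k-1}n - [k]_q d = q^{k-1}n - [k]_q d$ combined with $q^{k-1}\cdot q = q^k$ and $q[k]_q = [k]_q + q^{k-1}(q-1)\cdot\tfrac{?}{?}$; I would isolate this as a one-line computation at the start so the rest of the argument reads cleanly. The other point requiring a word of care is that the minimum weight of $C_{q,k}(m)$ is \emph{exactly} $d$ (not merely $\ge d$): this is guaranteed because the simplex design is a $2$-design, so every coordinate of $A m^T$ — equivalently every nonzero codeword weight — is realized, and the hypothesis that $C$ has minimum weight $d$ pins it down; combined with the constant-weight property of $S_{q,k}^{(s)}$, the weight of the minimum-weight codeword of $C_0$ is forced to be $d - q^{k-1}s$ rather than anything larger.
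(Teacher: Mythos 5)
Your part (ii) follows the paper's route: reduce to $C_{q,k}(m)$, apply Lemma~\ref{lem:190125-5} to get $m_i \ge qd-(q-1)n$ for every $i$, split off the block $S_{q,k}^{(qd-(q-1)n)}$ by a column permutation, use $S_{q,k}S_{q,k}^T=O$ to get $GG^T=G_0G_0^T$ (so $C_0$ is LCD), and use the constant-weight property of the simplex blocks to pin the minimum weight of $C_0$ at exactly $d-q^{k-1}(qd-(q-1)n)=(q-1)r_{q,n,k,d}$; the column count $n-[k]_q(qd-(q-1)n)=q\cdot r_{q,n,k,d}$ is immediate from $1+(q-1)[k]_q=q^k$. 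This is sound; the paper packages the exact-weight computation through Lemma~\ref{lem:190228-1}, but your direct argument is equivalent.

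The gap is in part (i). Summing $m_i\ge qd-(q-1)n$ over the $[k]_q$ coordinates gives $n\ge [k]_q\,(qd-(q-1)n)$, which is precisely $q\cdot r_{q,n,k,d}\ge 0$, and no algebraic manipulation of that inequality can upgrade the bound $0$ to $k$. (Your proposed identity is also off: one has $q\cdot r_{q,n,k,d}= n-[k]_q\,(qd-(q-1)n)$ exactly, with no factor $q^{k-1}$ and no extra nonnegative terms.) Moreover, ruling out $0\le q\cdot r_{q,n,k,d}<k$ genuinely requires the LCD hypothesis, which your (i) never uses: for instance $r_{q,n,k,d}=0$ corresponds to the repeated simplex code $C_{q,k}(s\1_{[k]_q})$, which satisfies all the remaining hypotheses but is self-orthogonal rather than LCD. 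The missing step --- which is how the paper proves (i), and for which you already have every ingredient in your part (ii) --- is the rank argument: since $C$ is LCD, $GG^T=G_0G_0^T$ is a nonsingular $k\times k$ matrix, so the number $q\cdot r_{q,n,k,d}$ of columns of $G_0$ satisfies $q\cdot r_{q,n,k,d}\ge\rank(G_0)\ge\rank(G_0G_0^T)=k$; assertion (i) is the contrapositive. The same observation is also what entitles you to call $G_0$ a generator matrix of a $k$-dimensional code in (ii), a point worth stating explicitly.
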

\begin{proof}
Since $C$ is an $[n,k]$ code over $\mathbb{F}_q$
with dual distance $d(C^\perp) \ge 2$,
there is a vector $m=(m_1,m_2,\ldots,m_{[k]_q}) \in
\mathbb{Z}_{\ge 0}^{[k]_q}$
such that $C \cong C_{q,k}(m)$ and
$n=\sum_{i=1}^{[k]_q}m_i$.
Since the minimum weight of $C$ is $d$, we have
\[qd-(q-1)n \le m_i,\]
by Lemma~\ref{lem:190125-5}.
Since the generator matrix $G_{q,k}(m)$ of $C_{q,k}(m)$
consists of at least $qd-(q-1)n$ columns
$h_{q,k,i}$ for each $i \in\{1,2,\ldots,[k]_q\}$, we
obtain a matrix $G$ of form~\eqref{eq:G},
 by permuting columns of $G_{q,k}(m)$.
Here $G_0$ is a $k \times n_0$ matrix, where
\[
n_0= n-(qd-(q-1)n)[k]_q
= q(q^{k-1}n-[k]_q \cdot d)=q \cdot r_{q,n,k,d}. 
\]
Since $S_{q,k}S_{q,k}^T=O$, we have $GG^T=G_0G_0^T$.
Since $C$ is LCD,  we have
\begin{equation}\label{eq:rank}
n_0 \ge \rank(G_0) \ge \rank(G_0G_0^T)=\rank(GG^T)=k.
\end{equation}
This proves the assertion (i).

Let $C_0$ be the code with generator matrix $G_0$.
Again, \eqref{eq:rank} shows that 
$C_0$ is an LCD code of dimension $k$.
It remains to show that 
the minimum weight  $d_0$ of $C_0$ is $(q-1)r_{q,n,k,d}$.
The code $C'$ with generator matrix $S_{q,k}^{(qd-(q-1)n)}$ is
a self-orthogonal $[n',k,d']$ code, where
\[
n'=(qd-(q-1)n)[k]_q \text{ and }
d'=(qd-(q-1)n)q^{k-1}.
\]
By Lemma~\ref{lem:190228-1}, 
we have 
\[
d=d_0+d' \text{ and }
d_0=(q-1)\left(q^{k-1}n-[k]_q \cdot d\right)=(q-1)r_{q,n,k,d}.\]
This completes the proof.
\end{proof}

\begin{rem}
If $qd-(q-1)n \ge 1$, then we have
\begin{align*}
(q-1)(n-q \cdot r_{q,n,k,d})
&\ge -(q-1)n(q^k-1)+((q-1)n+1)(q^k-1) \\
&=q^k-1.
\end{align*}
     This means that the length of $C_0$ is less than
     the length of $C$.
\end{rem}

Let $C$ and $C'$ be LCD $[n,k,d]$ codes over $\FF_q$ with
dual distances $d(C^\perp) \ge 2$ and $d({C'}^\perp) \ge 2$.
By Lemma~\ref{lem:char},
there are LCD $[n,k,d]$ codes $D$ and $D'$ over $\FF_q$ satisfying
the following conditions:
\begin{enumerate}
\item $C \cong D$ and $C' \cong D'$,
\item $D$ and $D'$ have generator matrices
\[
G=
\left(\begin{array}{cccc}
&S_{q,k}^{(qd-(q-1)n)} &  G_0 &
\end{array}\right) \text{ and }
G'=
\left(\begin{array}{ccccc}
&S_{q,k}^{(qd-(q-1)n)} &  G'_0 &
\end{array}\right),
\]
where $G_0$ and $G'_0$ are generator matrices of some LCD
$[q \cdot r_{q,n,k,d},k,(q-1)r_{q,n,k,d}]$ codes $C_0$ and $C'_0$,
respectively.
\end{enumerate}

\begin{lem}\label{lem:main1}
Let $C,C',C_0$ and $C'_0$ be the codes described as above.
If $C_0 \cong C'_0$, then $C \cong C'$.
\end{lem}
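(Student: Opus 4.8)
The plan is to reverse the construction in Lemma~\ref{lem:char}, recovering an equivalence of the full codes $C$ and $C'$ from the assumed equivalence $C_0 \cong C_0'$. Since $C \cong D$ and $C' \cong D'$, and equivalence is transitive, it suffices to prove $D \cong D'$; so I work with the explicit generator matrices $G$ and $G'$. First I would invoke the hypothesis $C_0 \cong C_0'$ together with Lemma~\ref{lem:GM} to obtain a nonsingular $k \times k$ matrix $U$ and a $(q \cdot r_{q,n,k,d}) \times (q \cdot r_{q,n,k,d})$ monomial matrix $P_0$ with $G_0 = U G_0' P_0$. The natural candidate for an equivalence $D \to D'$ is to apply $U$ on the left of $G'$ and a suitable monomial matrix on the right. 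The block $S_{q,k}^{(qd-(q-1)n)}$ is the obstruction: $U S_{q,k}^{(qd-(q-1)n)}$ need not equal $S_{q,k}^{(qd-(q-1)n)}$ up to column monomial operations in general, but the key point is that $U$ is an invertible linear map and the columns of $S_{q,k}$ run over one representative from each of the $[k]_q$ one-dimensional subspaces of $\FF_q^k$ (equivalently, $S_{q,k}$ generates the simplex code). Hence $U$ permutes these one-dimensional subspaces, so each column $h_{q,k,i}$ of $S_{q,k}$ is sent by $U$ to a nonzero scalar multiple of some column $h_{q,k,\sigma(i)}$, for a permutation $\sigma$ of $\{1,\dots,[k]_q\}$ and scalars $\lambda_i \in \FF_q^\times$. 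Writing this as $U S_{q,k} = S_{q,k} Q$ for an appropriate $[k]_q \times [k]_q$ monomial matrix $Q$, we get $U S_{q,k}^{(qd-(q-1)n)} = S_{q,k}^{(qd-(q-1)n)} Q^{(qd-(q-1)n)}$ after a further column permutation grouping the $(qd-(q-1)n)$ copies correctly — so this block, too, is fixed up to right multiplication by a monomial matrix.

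Putting the two blocks together, I would set $\widetilde{P}$ to be the block-diagonal monomial matrix with the first block the monomial matrix realizing $U S_{q,k}^{(qd-(q-1)n)} = S_{q,k}^{(qd-(q-1)n)} \cdot (\text{block 1})$ and the second block $P_0^{-1}$, and then check that
\[
U G' \widetilde{P}
=
U
\begin{pmatrix}
S_{q,k}^{(qd-(q-1)n)} & G_0'
\end{pmatrix}
\widetilde{P}
=
\begin{pmatrix}
S_{q,k}^{(qd-(q-1)n)} & U G_0' P_0^{-1}
\end{pmatrix}.
\]
Wait — I need $U G_0' P_0^{-1} = G_0$, but the relation I extracted is $G_0 = U G_0' P_0$, i.e. $U G_0' = G_0 P_0^{-1}$; so I should instead use $P_0$ (not its inverse) as the second block and land on $\begin{pmatrix} S_{q,k}^{(qd-(q-1)n)} & G_0 \end{pmatrix} = G$ directly. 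In either formulation the upshot is that $U G' \widetilde{P} = G$ for a nonsingular $U$ and a monomial $\widetilde{P}$, which by definition means the row space of $G'$ and the row space of $G$ are equivalent codes, i.e. $D \cong D'$. Combined with $C \cong D$ and $C' \cong D'$ this yields $C \cong C'$.

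The main obstacle I anticipate is the bookkeeping in the claim $U S_{q,k}^{(qd-(q-1)n)} = S_{q,k}^{(qd-(q-1)n)} \cdot (\text{monomial})$: one must be careful that $U$ preserving the set of one-dimensional subspaces (which follows because the rows of $S_{q,k}$ and of $U S_{q,k}$ both span the simplex code, so the columns of each list exactly one nonzero vector from each line through the origin) really does translate into a clean monomial relation, and that concatenating $(qd-(q-1)n)$ copies does not disrupt this — it does not, since one can permute the copies block-wise. A secondary, purely cosmetic point is tracking whether $P_0$ or $P_0^{-1}$ appears; this is settled by writing the Lemma~\ref{lem:GM} relation in the form $U G_0' P_0 = G_0$ from the outset. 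Everything else — nonsingularity of $U$, the monomial structure being closed under the relevant products and block-diagonal juxtaposition — is routine.
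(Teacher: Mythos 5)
Your proposal is correct and follows essentially the same route as the paper: obtain $G_0=UG_0'P_0$ from Lemma~\ref{lem:GM}, observe that $US_{q,k}$ agrees with $S_{q,k}$ up to a monomial matrix $Q$ (the paper asserts this directly from the definition of $S_{q,k}$, while you justify it via $U$ permuting the one-dimensional subspaces), and then assemble the block-diagonal monomial matrix $\mathrm{diag}(Q,\ldots,Q,P_0)$ to exhibit the equivalence $C\cong C'$. The only differences are cosmetic (your extra justification of the monomial relation and the $P_0$ versus $P_0^{-1}$ bookkeeping, which you resolve correctly).
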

\begin{proof}
Since $C_0 \cong C'_0$,
there is a $k \times k$ nonsingular matrix $U$
and 
there is a $q \cdot r_{q,n,k,d} \times q \cdot r_{q,n,k,d}$ monomial
matrix $P$ 
such that $G_0=UG_0'P$ by Lemma~\ref{lem:GM}.
From the definition of $S_{q,k}$,  there is a
$[k]_q \times [k]_q$ monomial matrix $Q$ such that $S_{q,k}=(US_{q,k})Q$.
Thus, we have
\begin{align*}
\left(\begin{array}{cccc}
&S_{q,k}^{(qd-(q-1)n)} &  G_0 &
\end{array}\right)
&=
\left(\begin{array}{cccc}
&(US_{q,k}Q)^{(qd-(q-1)n)} & UG'_0P&
  \end{array}\right)
\\
&=
U\left(\begin{array}{cccc}
&S_{q,k}^{(qd-(q-1)n)} &  G'_0 &
\end{array}\right)
\begin{pmatrix}
Q    &&& \\
&\ddots&  &\\\
&      & Q & \\
& && P
\end{pmatrix}.
\end{align*}
It follows that $C \cong C'$.
\end{proof}



\begin{lem}\label{lem:main2}
Suppose that $q \in \{2,3\}$ and $s \in \ZZ_{\ge 0}$.
Let $B$ and $B'$ be $[\ell,k]$ codes over $\FF_q$ with generator matrices $M$ and $M'$, respectively. 
Let $C$ and $C'$ be $[n,k]$ codes over $\FF_q$ with generator matrices
$
\begin{pmatrix}
& S_{q,k}^{(s)} &  M &
\end{pmatrix}$
and 
$
\begin{pmatrix}
& S_{q,k}^{(s)} &  M' &
\end{pmatrix}$, respectively, where $n=[k]_q \cdot s+\ell$.
If $B \not\cong B'$, then $C \not\cong C'$.
\end{lem}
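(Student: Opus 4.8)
The plan is to prove the contrapositive: assuming $C\cong C'$, I will deduce $B\cong B'$. By Lemma~\ref{lem:GM}, the equivalence $C\cong C'$ provides a $k\times k$ nonsingular matrix $U$ and an $n\times n$ monomial matrix $P$ with
\[
\begin{pmatrix} S_{q,k}^{(s)} & M \end{pmatrix}
=U\begin{pmatrix} S_{q,k}^{(s)} & M' \end{pmatrix}P .
\]
The first step is to absorb the nonsingular factor $U$ into the simplex blocks at the cost of only a monomial transformation. Exactly as in the proof of Lemma~\ref{lem:main1}, the columns of $S_{q,k}$ form a set of representatives of the one-dimensional subspaces of $\FF_q^k$, so the columns of $US_{q,k}$ agree with those of $S_{q,k}$ up to nonzero scalars and a permutation; hence there is a $[k]_q\times[k]_q$ monomial matrix $Q$ with $S_{q,k}=US_{q,k}Q$, i.e.\ $US_{q,k}=S_{q,k}Q^{-1}$. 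Writing $\widetilde Q$ for the block-diagonal monomial matrix of order $[k]_q\cdot s$ whose $s$ diagonal blocks all equal $Q^{-1}$, a direct computation gives $US_{q,k}^{(s)}=S_{q,k}^{(s)}\widetilde Q$, and therefore
\[
U\begin{pmatrix} S_{q,k}^{(s)} & M' \end{pmatrix}
=\begin{pmatrix} S_{q,k}^{(s)} & UM' \end{pmatrix}
\begin{pmatrix} \widetilde Q & O \\ O & I_\ell \end{pmatrix}.
\]

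Combining the two displays, I obtain $\begin{pmatrix} S_{q,k}^{(s)} & M \end{pmatrix}=\begin{pmatrix} S_{q,k}^{(s)} & UM' \end{pmatrix}P'$, where $P'$ is the product of the monomial matrix $\begin{pmatrix} \widetilde Q & O \\ O & I_\ell \end{pmatrix}$ with $P$, hence itself monomial. Now Lemma~\ref{lem:1} applies verbatim with $A=S_{q,k}^{(s)}$ (a $k\times[k]_q s$ matrix), $B=UM'$ and $B'=M$ (both $k\times\ell$): it yields an $\ell\times\ell$ monomial matrix $P^*$ with $M=UM'P^*$. Since $U$ is nonsingular, the rows of $UM'$ span $B'$, so the rows of $M=UM'P^*$ span $\{bP^*\mid b\in B'\}$; thus $B\cong B'$, which is precisely the contrapositive of the assertion. (When $s=0$ the statement is immediate since then $C=B$ and $C'=B'$ have the same generator matrices.)

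The step I regard as the crux is pushing $U$ past the simplex blocks so that Lemma~\ref{lem:1} becomes applicable; once the identity $S_{q,k}=US_{q,k}Q$ is in hand this is routine bookkeeping, but it is what makes the argument work, and that identity is exactly the fact already invoked in the proof of Lemma~\ref{lem:main1}, coming from the description of the columns of $S_{q,k}$ as a set of projective representatives. The hypothesis $q\in\{2,3\}$ is used only to keep us within the setting in which $S_{q,k}$ has been introduced; no other difficulty is expected.
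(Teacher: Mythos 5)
Your proposal is correct and follows essentially the same route as the paper's proof: prove the contrapositive via Lemma~\ref{lem:GM}, absorb the nonsingular factor $U$ into the simplex blocks using the monomial relation $S_{q,k}=(US_{q,k})Q$ coming from the definition of $S_{q,k}$, and then apply Lemma~\ref{lem:1} to extract a monomial matrix giving $M=UM'P^*$, hence $B\cong B'$. The only differences are cosmetic (which of $C,C'$ is expressed in terms of the other, and the bookkeeping of the block-diagonal monomial matrix), so there is nothing to add.
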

\begin{proof}
Suppose that $C \cong C'$.
By Lemma~\ref{lem:GM},
there is a $k \times k$ nonsingular matrix $U$ and
there is an $n \times n$ monomial matrix $P$ such that
\begin{equation}\label{eq:PGQ}
\begin{pmatrix}
& S_{q,k}^{(s)} &  M' &
\end{pmatrix}
=U
\begin{pmatrix}
& S_{q,k}^{(s)} &  M &
\end{pmatrix}
P. 
\end{equation}
Since $U$ is nonsingular,
it follows from the definition of $S_{q,k}$
that there is a $[k]_q \times [k]_q$ monomial matrix $P^*$ such that
\begin{equation}\label{eq:PGQ^*}
 S_{q,k}=(US_{q,k})P^*.
\end{equation}
Define the $n \times n$ monomial matrix $Q$ as follows:
\begin{equation}\label{eq:Q}
Q=
P^{-1}
\begin{pmatrix}
P^*    &&& \\
&\ddots&  &\\\
&      & P^* & \\
& && I_m
\end{pmatrix}.
\end{equation}
From~\eqref{eq:PGQ}, \eqref{eq:PGQ^*} and \eqref{eq:Q}, we have
\begin{align*}
\begin{pmatrix}
& S_{q,k}^{(s)} &  M' &
\end{pmatrix}
&=
\begin{pmatrix}
& (US_{q,k}P^*)^{(s)} & UMI_m &
\end{pmatrix}
Q^{-1} \\
&=
\begin{pmatrix}
& S_{q,k}^{(s)} & UM &
\end{pmatrix}
Q^{-1}.
\end{align*}
By Lemma~\ref{lem:1},
there is an $m \times m$ monomial matrix $R$ such that
$M'=UMR$.
This implies that $B \cong B'$.
\end{proof}

We are now in a position to give the following
characterization of LCD codes with large minimum weights,
which is the main result of this paper.

\begin{thm}\label{thm:main}
Suppose that $(q,k_0) \in \{(2,3),(3,2)\}$, $qd-(q-1)n \ge 1$,
$k \ge k_0$ and $q \cdot r_{q,n,k,d} \ge k$, where
$r_{q,n,k,d}$ is the integer defined in~\eqref{eq:r}.
\begin{enumerate}
\item
There is a one-to-one correspondence between
equivalence classes of LCD $[n,k,d]$ codes over $\FF_q$
with dual distances $d^\perp \ge 2$ 
and 
equivalence classes of LCD
$[q \cdot r_{q,n,k,d},k,(q-1)r_{q,n,k,d}]$ codes over $\FF_q$
with dual distances $d^\perp \ge 2$.
\item
If there is no LCD $[q \cdot r_{q,n,k,d},k,(q-1)r_{q,n,k,d}]$ code
over $\mathbb{F}_q$ with dual distance $d^\perp \ge 2$,
then there is no LCD $[n,k,d]$ code over $\mathbb{F}_q$.
\end{enumerate}
\end{thm}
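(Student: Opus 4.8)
The plan is to derive Theorem~\ref{thm:main} from the preceding lemmas, almost all of the work having been done already. For part~(i), the strategy is to exhibit the bijection explicitly. Given an LCD $[n,k,d]$ code $C$ over $\FF_q$ with $d^\perp \ge 2$, Lemma~\ref{lem:char}~(ii) produces an equivalent code $C'$ with generator matrix
\[
G=\begin{pmatrix}
& S_{q,k}^{(qd-(q-1)n)} & G_0 &
\end{pmatrix},
\]
where $G_0$ generates an LCD $[q\cdot r_{q,n,k,d},k,(q-1)r_{q,n,k,d}]$ code $C_0$. (Note that the hypothesis $q\cdot r_{q,n,k,d}\ge k$ is exactly what allows us to bypass Lemma~\ref{lem:char}~(i), so an LCD code of the stated length and dimension does make sense.) First I would check that $C_0$ may be taken with $d^\perp\ge 2$: if $G_0$ had a zero column, then so would $G$, contradicting $d^\perp(C)\ge 2$; alternatively, this follows because the columns of $G_0$ are among the columns $h_{q,k,i}$, each of which is nonzero. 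This defines a map from equivalence classes of LCD $[n,k,d]$ codes with $d^\perp\ge 2$ to equivalence classes of LCD $[q\cdot r_{q,n,k,d},k,(q-1)r_{q,n,k,d}]$ codes with $d^\perp\ge 2$.

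The map is well-defined and injective by Lemmas~\ref{lem:main1} and~\ref{lem:main2} respectively: Lemma~\ref{lem:main1} says that if the associated codes $C_0$ and $C_0'$ are equivalent then $C\cong C'$ (so the class of $C_0$ does not depend on the choice of $C'$ within the class of $C$, and the map is injective), while Lemma~\ref{lem:main2}, applied with $s=qd-(q-1)n$ and $M=G_0$, $M'=G_0'$, gives the contrapositive: if $C_0\not\cong C_0'$ then $C\not\cong C'$. For surjectivity, I would start from an arbitrary LCD $[q\cdot r_{q,n,k,d},k,(q-1)r_{q,n,k,d}]$ code $D_0$ over $\FF_q$ with $d^\perp\ge 2$ and form the code $D$ with generator matrix $\begin{pmatrix}& S_{q,k}^{(qd-(q-1)n)} & G_0 &\end{pmatrix}$ where $G_0$ generates $D_0$; by Lemma~\ref{lem:190228-1} (with $s=qd-(q-1)n$) this $D$ is an LCD code, and its length is $[k]_q(qd-(q-1)n)+q\cdot r_{q,n,k,d}=n$ while its minimum weight is $(q-1)r_{q,n,k,d}+q^{k-1}(qd-(q-1)n)=d$; moreover $d^\perp(D)\ge 2$ since $D_0$ has no zero column and the columns of $S_{q,k}$ are nonzero. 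Thus $D$ maps to the class of $D_0$, proving the correspondence is onto.

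Part~(ii) is then immediate. If there is no LCD $[q\cdot r_{q,n,k,d},k,(q-1)r_{q,n,k,d}]$ code over $\FF_q$ with $d^\perp\ge 2$, then by the bijection of part~(i) there is no LCD $[n,k,d]$ code over $\FF_q$ with $d^\perp\ge 2$. To upgrade this to the absence of \emph{any} LCD $[n,k,d]$ code, I would invoke Proposition~\ref{prop:dd1}: having no LCD $[n,k,d']$ code with $d'\ge d$ and $d^\perp\ge 2$ would, by that proposition, imply there is no LCD $[n,k,d']$ code with $d'\ge d$ and $d^\perp=1$ either. The small gap to bridge is that part~(i) only rules out $d^\perp\ge 2$ at the \emph{exact} minimum weight $d$; I would handle this by noting that $r_{q,n,k,d'}\le r_{q,n,k,d}$ for $d'\ge d$ and tracing through whether the nonexistence propagates, or more simply by observing that an LCD $[n,k,d']$ code with $d'>d$ and $d^\perp\ge 2$ would itself have minimum weight $d'$, and applying part~(i) at the parameters $(n,k,d')$ — but this requires re-checking that the hypotheses $q d'-(q-1)n\ge 1$ and $q\cdot r_{q,n,k,d'}\ge k$ still hold, which they need not. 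The cleanest route, and the one I expect to be the main (minor) obstacle, is to argue directly that if an LCD $[n,k,d']$ code with $d'>d$ existed then deleting or modifying columns would yield one with minimum weight exactly $d$; since all the constructive machinery (Lemmas~\ref{lem:a}, \ref{lem:190228-1}) only \emph{raises} weights, the correct statement is probably just that part~(ii) as literally stated concerns the code of minimum weight exactly $d$, and the phrase "there is no LCD $[n,k,d]$ code" means exactly that — so the proof is simply: part~(i) kills the $d^\perp\ge 2$ case, and Proposition~\ref{prop:dd1} (applied with $d_0=d$, using that no LCD $[n,k,d'']$ code with $d''\ge d$ and $d^\perp\ge 2$ exists, which again follows by monotonicity of $r_{q,n,k,\cdot}$) kills the $d^\perp=1$ case.
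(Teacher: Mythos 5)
Your part (i) is correct and is essentially the paper's own argument: Lemma~\ref{lem:char}~(ii) gives the decomposition, Lemma~\ref{lem:main2} (in contrapositive form) makes the passage from $C$ to $C_0$ well defined on equivalence classes, Lemma~\ref{lem:main1} gives injectivity, and surjectivity, with the parameter check $n=[k]_q(qd-(q-1)n)+q\,r_{q,n,k,d}$ and $d=(q-1)r_{q,n,k,d}+q^{k-1}(qd-(q-1)n)$, follows from Lemma~\ref{lem:190228-1}; your remark that the relevant generator matrices have no zero column, so that $d^\perp\ge 2$ is preserved in both directions, is the right way to keep the correspondence inside the classes with $d^\perp\ge 2$. (You interchange the names of Lemmas~\ref{lem:main1} and~\ref{lem:main2} when assigning them to injectivity and well-definedness, but you invoke both with the correct content.)

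For part (ii) you follow the paper's route (part (i) combined with Proposition~\ref{prop:dd1}), and you correctly isolate the delicate point: Proposition~\ref{prop:dd1} with $d_0=d$ requires the nonexistence of LCD $[n,k,d'']$ codes with $d''\ge d$ and $d^\perp\ge 2$, whereas part (i) only excludes $d''=d$, since a dual-distance-$1$ code repaired by Lemma~\ref{lem:a} may return with minimum weight strictly larger than $d$. However, your final assertion that the case $d''>d$ ``follows by monotonicity of $r_{q,n,k,\cdot}$'' is not a proof. If an LCD $[n,k,d'']$ code with $d''>d$ and $d^\perp\ge 2$ existed, then Lemma~\ref{lem:char} applied at $(n,k,d'')$ yields an LCD $[q\,r'',k,(q-1)r'']$ code with $r''=r_{q,n,k,d''}=r_{q,n,k,d}-[k]_q(d''-d)<r_{q,n,k,d}$, about which the hypothesis of (ii) says nothing; alternatively, decomposing at level $d$ (Lemma~\ref{lem:190125-5} only needs minimum weight at least $d$) yields an LCD $[q\,r_{q,n,k,d},k,d_0]$ code with $d_0=d''-q^{k-1}(qd-(q-1)n)>(q-1)r_{q,n,k,d}$ and $d^\perp\ge 2$, which again is not excluded, because the hypothesis concerns minimum weight exactly $(q-1)r_{q,n,k,d}$; and LCD codes of length $q\,r$ with $d^\perp\ge 2$ and minimum weight exceeding $(q-1)r$ do exist in general (for instance binary LCD $[26,3,14]$ codes, while $(q-1)r=13$ there), so you cannot silently cap $d_0$ at $(q-1)r$. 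Hence the branch ``dual distance $1$, and the reconstruction overshoots the weight'' is not closed by your argument. In fairness, the paper's own proof of (ii) is the same one-line reduction to Proposition~\ref{prop:dd1} and (i) and does not address this point explicitly; in the paper's applications the difficulty is harmless because there $d$ sits at the Griesmer bound or the weights above $d$ are excluded separately (Lemma~\ref{lem:n0}), but a self-contained proof of (ii) as stated must either add such an ingredient or show directly that an LCD $[q\,r_{q,n,k,d},k,d_0]$ code with $d_0>(q-1)r_{q,n,k,d}$ and $d^\perp\ge 2$ would force an LCD $[q\,r_{q,n,k,d},k,(q-1)r_{q,n,k,d}]$ code with $d^\perp\ge 2$, which the cited lemmas do not provide.
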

\begin{proof}
The assertion (i) follows from Lemmas~\ref{lem:char}, \ref{lem:main1} and~\ref{lem:main2}.
The assertion (ii)  follows from Proposition~\ref{prop:dd1} and (i) 
(or Lemma~\ref{lem:char}).
\end{proof}

Roughly speaking,
Theorem~\ref{thm:main} (i) says that a classification of
LCD codes with large minimum weights for small length
yields that of LCD codes with large minimum weights for large length.
In addition, 
Theorem~\ref{thm:main} (ii) says that the nonexistence of
LCD codes with large minimum weights for small length
yields that of LCD codes with large minimum weights for large length.

\subsection{Modification of Theorem~\ref{thm:main}}
As the next step, we consider a modification of Theorem~\ref{thm:main}
by adding some assumption~\eqref{eq:as} on minimum weights (Theorem~\ref{thm:main2}).
Assume that we write 
\[
n=[k]_q \cdot s+t,
\]
where $s \in \ZZ_{\ge 0}$
and $t \in \{0,1,\ldots,[k]_q-1\}$.
In addition, assume the following:
\begin{equation}\label{eq:as}
\begin{split}
&\text{the minimum weight $d$ is written as}\\
&d(s,t)=q^{k-1}s+\alpha(t), \\
&\text{where $\alpha(t)$ is a constant depending on only $t$.} 
\end{split}
\end{equation}
The condition $q \cdot d(s,t)-(q-1)([k]_q \cdot s+t) \ge 1$ in 
Theorem~\ref{thm:main}
is equivalent to the condition
$s \ge s'_{q,([k]_q \cdot s+t),k,d(s,t)}$, where
\begin{equation}\label{eq:s0}
 s'_{q,([k]_q \cdot s+t),k,d(s,t)}=
  \frac{qr_{q,([k]_q \cdot s+t),k,d(s,t)}-t}{[k]_q}+1. 
\end{equation}
From~\eqref{eq:r}, we have 
\begin{equation}\label{eq:r2}
r_{q,([k]_q \cdot s+t),k,d(s,t)}
=q^{k-1}([k]_q \cdot s+t)-[k]_q \cdot d(s,t)
=q^{k-1}t -[k]_q \cdot \alpha(t).
\end{equation}
From~\eqref{eq:s0} and \eqref{eq:r2}, we have the following:

\begin{lem}\label{lem:rqcon}
Both $r_{q,([k]_q \cdot s+t),k,d(s,t)}$ 
and 
$s'_{q,([k]_q \cdot s+t),k,d(s,t)}$
depend on only $q,k,t$ and do not depend on $s$.
\end{lem}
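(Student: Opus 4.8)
The proof is essentially a direct substitution argument, reading off the claimed independence from the two displayed formulas that precede the statement. The plan is to observe that equations~\eqref{eq:r2} and~\eqref{eq:s0} already express both quantities in closed form using only $q$, $k$, $t$, $\alpha(t)$, and $[k]_q$, none of which involves $s$.

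First I would treat $r_{q,([k]_q \cdot s+t),k,d(s,t)}$. By definition~\eqref{eq:r} together with the assumption~\eqref{eq:as} that $d(s,t)=q^{k-1}s+\alpha(t)$, the term $q^{k-1}([k]_q \cdot s+t)$ contributes $q^{k-1}[k]_q \cdot s$ and $[k]_q \cdot d(s,t)$ contributes $[k]_q q^{k-1}s$; these cancel, leaving $r_{q,([k]_q \cdot s+t),k,d(s,t)}=q^{k-1}t-[k]_q \cdot \alpha(t)$, which is exactly~\eqref{eq:r2}. Since $q^{k-1}$, $t$, $[k]_q$, and $\alpha(t)$ all depend only on $q$, $k$, $t$, the first quantity does too.

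Next I would handle $s'_{q,([k]_q \cdot s+t),k,d(s,t)}$. Its definition~\eqref{eq:s0} is
$\dfrac{q\, r_{q,([k]_q \cdot s+t),k,d(s,t)}-t}{[k]_q}+1$. Substituting the expression for $r$ just obtained gives
$\dfrac{q(q^{k-1}t-[k]_q\cdot\alpha(t))-t}{[k]_q}+1$, which visibly depends only on $q$, $k$, $t$. This completes the argument; there is no real obstacle, since the cancellation of the $s$-dependent terms is forced by the precise form~\eqref{eq:as} of $d(s,t)$. The only point worth a sentence of care is that $[k]_q$ divides $q\,r_{q,([k]_q\cdot s+t),k,d(s,t)}-t$, so that $s'$ is an integer; but this is guaranteed because $s'$ arises from rewriting the integer inequality $q\,d(s,t)-(q-1)n\ge 1$ as an inequality on the integer $s$, so the quotient in~\eqref{eq:s0} is automatically an integer and I would simply note this rather than verify a divisibility by hand.
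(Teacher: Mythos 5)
Your proposal is correct and matches the paper's own argument: the paper derives exactly your identity $r_{q,([k]_q\cdot s+t),k,d(s,t)}=q^{k-1}t-[k]_q\cdot\alpha(t)$ in~\eqref{eq:r2} by substituting $d(s,t)=q^{k-1}s+\alpha(t)$ into~\eqref{eq:r} (the $s$-terms cancel), and then reads off the independence of $s'$ from~\eqref{eq:s0}. Your extra remark on the integrality of $s'$ is fine but not needed for the lemma as stated.
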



\begin{thm}
\label{thm:main2}
Suppose that $(q,k_0) \in \{(2,3),(3,2)\}$ and $k \ge k_0$.
Write $n=[k]_q \cdot s+t \ge k$,
where $s \in \ZZ_{\ge 0}$
and $t \in \{0,1,\ldots,[k]_q-1\}$.
Assume~\eqref{eq:as}, that is,
$d$ is written as $d(s,t)=q^{k-1}s+\alpha(t)$,
where $\alpha(t)$ is a constant depending on only $t$.
Let $r$ and $s'$ denote the integers 
$r_{q,[k]_q \cdot s+t ,k,d(s,t)}$ and $s'_{q,[k]_q \cdot s+t ,k,d(s,t)}$ 
defined in~\eqref{eq:r} and~\eqref{eq:s0}, respectively.
Suppose that $q r \ge k$. 
\begin{enumerate}
\item
There is a one-to-one correspondence between
equivalence classes of LCD codes over $\FF_q$
with dual distances $d^\perp \ge 2$ and parameters
\begin{equation}\label{eq:mainp1}
[qr,k,(q-1)r]
=
[[k]_q \cdot (s'-1)+t,k,q^{k-1}(s'-1)+\alpha(t)],
\end{equation}
and 
equivalence classes of LCD codes over $\FF_q$
with dual distances $d^\perp \ge 2$ and parameters
\begin{equation}\label{eq:mainp2}
[[k]_q \cdot s+t,k,q^{k-1}s+\alpha(t)],
\end{equation}
for every integer
$s \ge s'$.
\item
If there is no LCD code over $\FF_q$
with dual distance $d^\perp \ge 2$ and parameters~\eqref{eq:mainp1},
then
there is no  LCD code over $\FF_q$ with parameters~\eqref{eq:mainp2}
for every integer $s \ge s'$.
\end{enumerate}
\end{thm}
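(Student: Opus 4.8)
The plan is to derive Theorem~\ref{thm:main2} as a direct corollary of Theorem~\ref{thm:main} by substituting the hypothesis~\eqref{eq:as} into the quantities appearing there. First I would observe that, under the parametrization $n=[k]_q\cdot s+t$ and $d=d(s,t)=q^{k-1}s+\alpha(t)$, Lemma~\ref{lem:rqcon} tells us that $r=r_{q,n,k,d(s,t)}$ and $s'=s'_{q,n,k,d(s,t)}$ depend only on $q,k,t$, not on $s$; in particular the hypothesis $qr\ge k$ of the theorem is exactly the hypothesis $q\cdot r_{q,n,k,d}\ge k$ of Theorem~\ref{thm:main}, independent of which $s\ge s'$ we pick.

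Next I would check that the condition ``$s\ge s'$'' is equivalent to the condition ``$qd-(q-1)n\ge1$'' of Theorem~\ref{thm:main}. This is precisely the remark made just before~\eqref{eq:s0}: the inequality $q\cdot d(s,t)-(q-1)([k]_q\cdot s+t)\ge 1$ rearranges to $s\ge s'_{q,([k]_q\cdot s+t),k,d(s,t)}=s'$. So for every $s\ge s'$ all three hypotheses of Theorem~\ref{thm:main} — namely $(q,k_0)\in\{(2,3),(3,2)\}$, $k\ge k_0$, $qd-(q-1)n\ge1$, and $q\cdot r_{q,n,k,d}\ge k$ — are satisfied. Applying Theorem~\ref{thm:main}~(i) then gives, for each such $s$, a one-to-one correspondence between equivalence classes of LCD $[n,k,d]=[[k]_q\cdot s+t,\,k,\,q^{k-1}s+\alpha(t)]$ codes over $\FF_q$ with $d^\perp\ge2$ and equivalence classes of LCD $[q\cdot r_{q,n,k,d},\,k,\,(q-1)r_{q,n,k,d}]$ codes over $\FF_q$ with $d^\perp\ge2$.

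What remains is purely bookkeeping: I must verify that the ``small'' parameters $[qr,k,(q-1)r]$ coincide with $[[k]_q\cdot(s'-1)+t,\,k,\,q^{k-1}(s'-1)+\alpha(t)]$, i.e.\ that~\eqref{eq:mainp1} holds. Using~\eqref{eq:r2}, which gives $r=q^{k-1}t-[k]_q\cdot\alpha(t)$, together with the definition~\eqref{eq:s0} of $s'$, one computes $qr=[k]_q(s'-1)+t$ and hence $(q-1)r=q^{k-1}(s'-1)+\alpha(t)$ after substituting; this is a short algebraic manipulation. Note that this ``small'' code has the same residue $t$ mod $[k]_q$ as the original, so its own associated $r$ and $s'$ values agree (again by Lemma~\ref{lem:rqcon}), which is what makes the statement internally consistent. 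Since the target code in the correspondence is independent of $s$, chaining the correspondences for two values $s_1,s_2\ge s'$ yields a one-to-one correspondence between the two families in~\eqref{eq:mainp1} and~\eqref{eq:mainp2}; taking $s_1=s'$ gives the stated form, proving~(i). For~(ii), if there is no LCD code with parameters~\eqref{eq:mainp1} and $d^\perp\ge2$, then Theorem~\ref{thm:main}~(ii) immediately gives, for each $s\ge s'$, that there is no LCD code with parameters~\eqref{eq:mainp2} at all (not merely none with $d^\perp\ge2$).

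I do not expect a genuine obstacle here; the only place demanding care is the identity~\eqref{eq:mainp1}, where one must correctly track the definitions~\eqref{eq:r}, \eqref{eq:s0}, \eqref{eq:r2} and confirm the arithmetic $qr=[k]_q(s'-1)+t$. A secondary point worth stating explicitly is that the hypothesis ``$s\ge s'$'' is needed for \emph{both} directions of the equivalence and for~(ii), and that the boundary case $s=s'$ corresponds to $qd-(q-1)n=1$, so it is included; this should be remarked so the reader sees why $s'$ (rather than $s'+1$) is the right threshold.
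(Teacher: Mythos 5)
Your proposal is correct and follows essentially the same route as the paper: verify via Lemma~\ref{lem:rqcon} and the identity $qd-(q-1)n=s-s'+1$ that the hypotheses of Theorem~\ref{thm:main} hold for every $s\ge s'$, check the arithmetic $qr=[k]_q(s'-1)+t$ and $(q-1)r=q^{k-1}(s'-1)+\alpha(t)$ from \eqref{eq:r2} and \eqref{eq:s0}, and then apply Theorem~\ref{thm:main}~(i) and (ii). The only difference is cosmetic: the paper's proof additionally treats the case $s<s'-1$ (via Lemma~\ref{lem:190228-1}), which goes beyond the range $s\ge s'$ actually asserted in the statement, so your argument fully covers what is claimed, and your explicit appeal to Theorem~\ref{thm:main}~(ii) for part (ii) is if anything slightly more careful than the paper's ``follows directly from (i)''.
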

\begin{proof}
From~\eqref{eq:s0}, we have
\begin{equation}\label{eq:q1}
q r
=[k]_q (s'-1)+t.
\end{equation}
From~\eqref{eq:r2} and \eqref{eq:q1}, we have
\begin{equation}\label{eq:q2}
\begin{split}
(q-1)r &= \frac{q-1}{q}([k]_q (s'-1)+t)\\ 
&=q^{k-1}(s'-1) +\frac{1}{q}(-(s'-1)+t(q-1))\\
&= q^{k-1} (s'-1)+\alpha(t).
\end{split}
\end{equation}
The assertion (i) follows from Theorem~\ref{thm:main},
Lemma~\ref{lem:rqcon}, \eqref{eq:q1} and \eqref{eq:q2}.

For $s \ge s'$, the assertion (ii)  follows directly from (i). 
Consider the case $s < s'-1$.
Suppose that
there is an LCD $[[k]_q \cdot s''+t,k,q^{k-1}s''+\alpha(t)]$ code over
$\mathbb{F}_q$ for some  $s'' < s'-1$.
Then, by Lemma~\ref{lem:190228-1},
there is an LCD $[[k]_q \cdot (s'-1)+t,k,q^{k-1}(s'-1)+\alpha(t)]$
code over $\mathbb{F}_q$.
This is a contradiction.
\end{proof}

The above theorem can be regarded as a slight improvement of
Theorem~\ref{thm:main} 
under the assumption~\eqref{eq:as}.
Roughly speaking,
Theorem~\ref{thm:main2} (i) says that a classification of
LCD codes with large minimum weights for small length
yields that of LCD codes with large minimum weights for
a family of lengths.
In addition, 
Theorem~\ref{thm:main2} (ii) says that the nonexistence of
LCD codes with large minimum weights for small length
yields that of LCD codes with large minimum weights for
a family of lengths.

By using Theorem~\ref{thm:main2},
for arbitrary $n$,
we complete the determination of 
the largest minimum weights $d_q(n,k)$
and  a classification of optimal 
LCD $[n,k]$ codes over $\FF_q$, where
$k \le 6-q$ and $q=2$ (resp.\ $q=3$)
in Section~\ref{sec:2} (resp.\ \ref{sec:3}).
We remark that the assumption~\eqref{eq:as} is automatically satisfied
for our study in these sections.


\section{Classification method}
\label{sec:Cmethod}

In Sections~\ref{sec:2} and~\ref{sec:3},
by Theorem~\ref{thm:main},
for arbitrary $n$, 
we complete the determination of 
the largest minimum weights $d_q(n,k)$
and  a classification of optimal 
LCD $[n,k]$ codes over $\FF_q$, where
$k \le 6-q$ and $q \in \{2,3\}$.
These results are obtained by showing 
the nonexistence of
LCD codes with large minimum weights
and giving a classification of optimal
LCD codes for small lengths by the following method.

Here we suppose that $(q,k) \in \{(2,3),(2,4),(3,2),(3,3)\}$.
Any LCD $[n,k]$ code over $\FF_q$ with dual distance $d^\perp \ge 2$
is equivalent to a code $C_{q,k}(m)$ which has generator matrix
$G_{q,k}(m)$ of form~\eqref{eq:Gqkm}
for some $m=(m_1,m_2,\ldots,m_{[k]_q}) \in \ZZ_{\ge 0}^{[k]_q}$ and
$n=\sum_{i=1}^{[k]_q}m_i$.
In addition,
any $[n,k]$ code over $\FF_q$ is equivalent to some
code with generator matrix of form 
$
\left(\begin{array}{cc}
I_k &  A 
\end{array}\right),
$
where $A$ is a $k \times (n-k)$ matrix.
Hence, we may assume without loss of generality that
\[
\begin{array}{rr}
m_1,m_2,m_4 \ge 1 \text{ if } (q,k)=(2,3), &
m_1,m_2,m_4,m_8 \ge 1 \text{ if } (q,k)=(2,4), \\
m_1,m_2 \ge 1 \text{ if } (q,k)=(3,2), &
m_1,m_2,m_5 \ge 1 \text{ if } (q,k)=(3,3). 
\end{array}
\]
Fix parameters $q,n,k$ and $d$.
By considering all possible vectors $m$ satisfying the above condition
and~\eqref{eq:mi},
we found all LCD $[n,k,d]$ codes over $\FF_q$
which must be checked further for equivalences.
For calculations of determinants of $GG^T$ for generator matrices $G$,
the NTL function {\tt determinant}~\cite{ntl} was used. 
Of course, if we found no LCD $[n,k,d']$ code over $\FF_q$
for $d \le d' \le g_q(n,k)$, then
we have $d_q(n,k) \le d-1$.
After checking whether codes are equivalent or not, 
we complete a classification of LCD $[n,k,d]$ codes over $\FF_q$.
We consider the corresponding digraphs for equivalence testing,
and we used {\tt nauty}~\cite{nauty} for digraph isomorphism testing
(see~\cite[Section~5]{AH-C} for the details).

All computer calculations in this paper were
done by programs in the language C.
Some verification was done by {\sc Magma}~\cite{Magma}.
Especially, 
when a classification
of LCD $[n,k,d]$ codes over $\FF_q$ with dual distances $d^\perp \ge 2$
was done, we verified by {\sc Magma} that 
all our codes are
LCD $[n,k,d]$ codes over $\FF_q$ with dual distances $d^\perp \ge 2$
and inequivalent codes.

\section{Binary optimal LCD codes}
\label{sec:2}

\subsection{Binary optimal LCD codes of dimension 3}

The largest minimum weights $d_2(n,3)$ were determined in~\cite{HS}
for arbitrary $n$ (see Table~\ref{Tab:2-3} for $d_2(n,3)$).
In addition, 
a classification of binary optimal LCD $[n,3]$ codes
was given in~\cite{HS} for $n \equiv 0,2,3,5 \pmod{7}$.
In this subsection, 
a classification of binary optimal LCD $[n,3]$ codes
is given for $n \equiv 1,4,6 \pmod{7}$.

\begin{table}[thbp]
\caption{Binary optimal LCD $[n,3]$ codes with dual distances $d^\perp\ge 2$}
\label{Tab:2-3}
\begin{center}
{\small
\begin{tabular}{c|c|c|c|c|c}
\noalign{\hrule height0.8pt}
 $n$ & $d_2(n,3)$ & $s'$ & $r$ & $N_2(n,3)$  & Reference\\
\hline
 $7s$   &$ 4s-1 $&$3$	 &   7& 1  &\cite{HS}\\
 $7s+1$ &$ 4s-1 $&$4$	 &  11& 7  & Proposition~\ref{prop:2-3-2}\\
 $7s+2$ &$ 4s   $&$3$	 &   8& 1  &\cite{HS}\\
 $7s+3$ &$ 4s+1 $&$2$	 &   5& 1  &\cite{HS}\\
 $7s+4$ &$ 4s+1 $&$3$	 &   9& 5  & Proposition~\ref{prop:2-3-2}\\
 $7s+5$ &$ 4s+2 $&$2$	 &   6& 1  &\cite{HS}\\
 $7s+6$ &$ 4s+2 $&$3$	 &  10& 5  & Proposition~\ref{prop:2-3-2}\\
\noalign{\hrule height0.8pt}
\end{tabular}
}
\end{center}
\end{table}

We apply Theorem~\ref{thm:main2} (i) to the case $(q,k)=(2,3)$.
For $n \ge 4$,
write $n=7s+t$, where $s \in \ZZ_{\ge 0}$ and $t \in \{1,4,6\}$.
Let $r=r_{2,7s+t,3,d_2(7s+t,3)}$ and $s'=s'_{2,7s+t,3,d_2(7s+t,3)}$
be the integers defined
in~\eqref{eq:r} and~\eqref{eq:s0}, respectively.
For each $7s+t$, we list $d_2(7s+t,3)$, $s'$ and $r$ in Table~\ref{Tab:2-3}.
Then $d_2(7s+t,3)$ is written as 
$4s+\alpha(t)$, where $\alpha(t)$ is a constant depending on only $t$.
Since the minimum weight satisfies 
the assumption~\eqref{eq:as} in Theorem~\ref{thm:main2},
we have the following:

\begin{prop}\label{prop:2-3}
There is a one-to-one correspondence between
equivalence classes of binary LCD
$[2r,3,r]$ codes with dual distances $d^\perp \ge 2$
and 
equivalence classes of binary LCD $[7s+t,3,d_2(7s+t,3)]$ codes 
with dual distances $d^\perp \ge 2$ 
for every integer $s \ge s'$. 
\end{prop}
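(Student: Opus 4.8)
\textbf{Proof proposal for Proposition~\ref{prop:2-3}.}

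The plan is to verify that Proposition~\ref{prop:2-3} is a direct instance of Theorem~\ref{thm:main2}~(i) with $(q,k)=(2,3)$, so the work consists entirely of checking that the hypotheses of that theorem are met for each residue $t \in \{1,4,6\}$ and that the parameters stated there specialize to the parameters claimed here. First I would recall that for $(q,k)=(2,3)$ we have $(q,k_0)=(2,3)$ with $k=k_0$, so $k \ge k_0$ holds; and $[k]_q = [3]_2 = 7$, $q^{k-1}=4$, $(q-1)=1$, so the parameters in~\eqref{eq:mainp1} and~\eqref{eq:mainp2} of Theorem~\ref{thm:main2} become $[2r,3,r]$ and $[7s+t,3,4s+\alpha(t)]$ respectively. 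These match the parameters in the statement of Proposition~\ref{prop:2-3} exactly.

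Next I would check assumption~\eqref{eq:as}: from Table~\ref{Tab:2-3}, for each $t\in\{1,4,6\}$ the value $d_2(7s+t,3)$ is an affine function of $s$ with slope $4 = q^{k-1}$, namely $d_2(7s+1,3)=4s-1$, $d_2(7s+4,3)=4s+1$, $d_2(7s+6,3)=4s+2$, so $\alpha(1)=-1$, $\alpha(4)=1$, $\alpha(6)=2$ are constants depending only on $t$. Thus the minimum weight $d=d(s,t)=q^{k-1}s+\alpha(t)$ has the required form, and the remaining hypotheses $n = [k]_q\cdot s + t \ge k$ (immediate for $n\ge 4$) and $qr \ge k$, i.e. $2r \ge 3$, which follows from the tabulated values $r\in\{11,9,10\}$ for $t\in\{1,4,6\}$, are satisfied. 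The values of $r=r_{2,7s+t,3,d_2(7s+t,3)}$ and $s'=s'_{2,7s+t,3,d_2(7s+t,3)}$ listed in Table~\ref{Tab:2-3} are, by Lemma~\ref{lem:rqcon}, independent of $s$, so a single value of each suffices per residue class; I would note in passing that these can be recomputed from~\eqref{eq:r2} and~\eqref{eq:s0} as a consistency check, but this is routine arithmetic.

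With all hypotheses verified, Theorem~\ref{thm:main2}~(i) applied with $(q,k)=(2,3)$ yields precisely the asserted one-to-one correspondence between equivalence classes of binary LCD $[2r,3,r]$ codes with $d^\perp\ge 2$ and equivalence classes of binary LCD $[7s+t,3,d_2(7s+t,3)]$ codes with $d^\perp\ge 2$, for every integer $s\ge s'$. I do not anticipate any genuine obstacle here: the proposition is a packaging of Theorem~\ref{thm:main2} for a concrete family, and the only thing to be careful about is bookkeeping — making sure the tabulated $d_2(7s+t,3)$, $r$, and $s'$ are consistent with~\eqref{eq:r}, \eqref{eq:r2}, \eqref{eq:s0}, and that the optimality values $d_2(7s+t,3)$ invoked from~\cite{HS} are correctly transcribed into the affine form required by~\eqref{eq:as}. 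The one substantive point worth stating explicitly in the proof is that $d_2(7s+t,3)$ really does equal $4s+\alpha(t)$ for all $s$ in the relevant range (not just eventually), which is what licenses the application of~\eqref{eq:as}; this is exactly the content of the third and fourth columns of Table~\ref{Tab:2-3} together with the results of~\cite{HS}.
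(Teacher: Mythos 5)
Your proposal is correct and follows essentially the same route as the paper: the paper's argument is precisely to verify assumption~\eqref{eq:as} via the affine form $d_2(7s+t,3)=4s+\alpha(t)$ read off from Table~\ref{Tab:2-3} and then invoke Theorem~\ref{thm:main2}~(i) with $(q,k)=(2,3)$. Your additional checks ($k\ge k_0$, $qr\ge k$, consistency of $r$ and $s'$ via Lemma~\ref{lem:rqcon}) are exactly the bookkeeping the paper leaves implicit.
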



\begin{table}[thbp]
\caption{$N_2(n,3)$}
\label{Tab:2-3-2}
\begin{center}
{\small
\begin{tabular}{c|c||c|c||c|c||c|c}
\noalign{\hrule height0.8pt}
 $n$ & $N_2(n,3)$ & $n$ & $N_2(n,3)$ &
 $n$ & $N_2(n,3)$ & $n$ & $N_2(n,3)$ \\
\hline
 4&  1& 10&  1& 16&  1& 22&  7 \\
 5&  1& 11&  5& 17&  1& 23&  1 \\
 6&  2& 12&  1& 18&  5& 24&  1 \\
 7&  1& 13&  5& 19&  1& 25&  5 \\
 8&  2& 14&  1& 20&  5& &\\
 9&  1& 15&  7& 21&  1& &\\
\noalign{\hrule height0.8pt}
 \end{tabular}
}
\end{center}
\end{table}

A classification of binary optimal LCD $[n,3]$ codes 
was done for $n \le 25$~\cite[Table~3]{AH-C}. 
We calculated the numbers $N_2(n,3)$ of inequivalent
binary optimal LCD $[n,3]$ codes with dual distances
$d^\perp \ge 2$, where the numbers $N_2(n,3)$ are listed in
Table~\ref{Tab:2-3-2}.

\begin{itemize}
\item Case $s \ge s'$:
Since $N_2(18,3)=5$, $N_2(20,3)=5$ and $N_2(22,3)=7$,
by Proposition~\ref{prop:2-3},
we have a classification of binary optimal
LCD $[7s+t,3]$ codes with dual distances $d^\perp \ge 2$ for
$s \ge s'$.

\item Case $s < s'$:
From Table~\ref{Tab:2-3-2},
it is known that
$N_{2}(15,3)=N_{2}(22,3)=7$,
$N_{2}(11,3)=N_{2}(18,3)=5$ and 
$N_{2}(13,3)=N_{2}(20,3)=5$.
\end{itemize}
Therefore, we have the following:

\begin{prop}\label{prop:2-3-2}
\begin{enumerate}
\item There are $7$ inequivalent binary optimal LCD $[7s+1,3]$
codes with dual distances $d^\perp\ge 2$ for every integer $s \ge 2$.
\item Suppose that $t \in \{4,6\}$.
Then there are $5$ inequivalent binary optimal LCD $[7s+t,3]$
codes with dual distances $d^\perp\ge 2$ for every integer $s \ge 1$.
\end{enumerate}
\end{prop}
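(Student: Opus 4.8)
\textbf{Proof proposal for Proposition~\ref{prop:2-3-2}.}

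The plan is to combine Proposition~\ref{prop:2-3} (the one-to-one correspondence coming from Theorem~\ref{thm:main2}~(i)) with the explicit small-length classification data recorded in Table~\ref{Tab:2-3-2}. First I would set up the parameters: writing $n = 7s+t$ with $t \in \{1,4,6\}$, I read off from Table~\ref{Tab:2-3} the values $(t,s',r) = (1,4,11),\,(4,3,9),\,(6,3,10)$, so that $2r \in \{22,18,20\}$ respectively, and in each case $2r = 7(s'-1)+t$. By Proposition~\ref{prop:2-3}, for every integer $s \ge s'$ the number of equivalence classes of binary optimal LCD $[7s+t,3]$ codes with $d^\perp \ge 2$ equals the number of equivalence classes of binary optimal LCD $[2r,3,r]$ codes with $d^\perp \ge 2$, i.e.\ $N_2(2r,3)$. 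From Table~\ref{Tab:2-3-2} these are $N_2(22,3)=7$, $N_2(18,3)=5$, $N_2(20,3)=5$. This settles the counts for all $s \ge s'$, in particular for $s \ge 2$ when $t=1$ and for $s \ge 3$ when $t \in \{4,6\}$.

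It remains to handle the finitely many "small" cases $s < s'$ that are not yet covered: for $t=4,6$ this is only $s \in \{1,2\}$, and for $t=1$ (where the proposition only claims $s \ge 2$) nothing further is needed beyond noting $s=2$ is already the base case $n=15$. So I would simply check these values directly against Table~\ref{Tab:2-3-2}: $n=11$ gives $N_2(11,3)=5$ and $n=18$ gives $N_2(18,3)=5$ (the $t=4$ cases $s=1,2$); $n=13$ gives $N_2(13,3)=5$ and $n=20$ gives $N_2(20,3)=5$ (the $t=6$ cases $s=1,2$); and $n=15$ gives $N_2(15,3)=7$ (the $t=1$ case $s=2$). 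In every instance the small-length count matches the stable value obtained from the correspondence, so the assertions hold uniformly: $N_2(7s+1,3)=7$ for all $s \ge 2$, and $N_2(7s+t,3)=5$ for all $s \ge 1$ when $t \in \{4,6\}$.

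The only genuine content beyond bookkeeping is the justification that Proposition~\ref{prop:2-3} applies, namely that the hypotheses of Theorem~\ref{thm:main2} are met: $q=2$, $k=3=k_0$, $n = 7s+t \ge 3$, the minimum weight $d_2(7s+t,3)$ is of the form $4s+\alpha(t)$ (which is exactly assumption~\eqref{eq:as}, verified case-by-case from Table~\ref{Tab:2-3}), and $qr = 2r \ge k = 3$ holds since $r \in \{9,10,11\}$. The main (minor) obstacle is really just ensuring the range of $s$ in the proposition's statement is exactly the range for which the correspondence plus the tabulated small cases give complete coverage — i.e.\ confirming that for $t\in\{4,6\}$ the cases $s=1,2$ are indeed in the table and agree, and that for $t=1$ the excluded case $s\in\{0,1\}$ is correctly excluded (there the optimal-code counts $N_2(1,3),N_2(8,3)$ differ or are degenerate, so the claim is only made for $s\ge 2$). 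No hard analysis is involved; the substance was already done in Theorem~\ref{thm:main2} and the computer classification underlying Table~\ref{Tab:2-3-2}.
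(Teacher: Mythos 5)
Your proposal is correct and takes essentially the same route as the paper: Proposition~\ref{prop:2-3} (i.e.\ Theorem~\ref{thm:main2}~(i)) settles all $s \ge s'$ via $N_2(22,3)=7$, $N_2(18,3)=N_2(20,3)=5$, and the directly tabulated values $N_2(11,3)=N_2(13,3)=N_2(18,3)=N_2(20,3)=5$, $N_2(15,3)=N_2(22,3)=7$ from Table~\ref{Tab:2-3-2} cover the remaining small lengths. Only a labeling quibble, not a gap: for $t=1$ the correspondence itself covers only $s \ge 4$ (the base length $2r$ is $n=22$, not $n=15$), so the cases $s=2,3$ (lengths $15$ and $22$) are both settled by the table entries you already quote.
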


\begin{rem}
If there is a binary $[n,n-3,d]$ code with $d \ge 3$, then
$n \le 7$ by the sphere-packing bound. 
Thus, the dual distances of all the codes in the above proposition
are exactly $2$.
\end{rem}

By Lemma~\ref{lem:class}, the above proposition
completes a classification of binary
optimal LCD codes of dimension $3$.

\subsection{Binary optimal LCD codes of dimension 4}

In this subsection, we give
a classification of binary optimal LCD codes of dimension $4$.

\subsubsection{Determination of $d_2(n,4)$}
\label{sec:2-4-D}

It is known that
\begin{align*}
& d_2(n,4)=
\begin{cases}
\left\lfloor \frac{8n}{15} \right\rfloor
 &\text{ if }n \equiv 5,9,13 \pmod{15},\\
\left\lfloor \frac{8n}{15} \right\rfloor-1
 &\text{ if }n \equiv 2,3,4,6,10 \pmod{15},
\end{cases}
\\& d_2(n,4)\ge
\begin{cases}
\left\lfloor \frac{8n}{15} \right\rfloor-1
&\text{ if }n \equiv 1,7,8,11,12,14 \pmod{15}, \\
\left\lfloor \frac{8n}{15} \right\rfloor-2
&\text{ if }n \equiv 0 \pmod{15},
\end{cases}
\end{align*}
\cite[Theorem~1 and Proposition~1]{AH}.


By Lemma~\ref{lem:n0},
there is no binary LCD $[15s,4,8s]$ code
for every positive integer $s$.
We apply Theorem~\ref{thm:main2} (ii) to the case $(q,k)=(2,4)$.
For $n \ge 7$,
write $n=15s+t$, where $s \in \ZZ_{\ge 0}$ and $t \in \{0,1,7,8,11,12,14\}$.
Suppose that $d(s,t)=8s-1$ if $t=0$ and $d(s,t)=g_2(15s+t,4)$
otherwise.
Let $r=r_{2,15s+t,4,d(s,t)}$ 
be the integer defined
in~\eqref{eq:r}.
For each $15s+t$, we list $d(s,t)$ and $r$
in Table~\ref{Tab:2-4-0}.
Then $d(s,t)$ is written as
$8s+\alpha(t)$, where $\alpha(t)$ is a constant depending on only $t$.
Since the minimum weight satisfies 
the assumption~\eqref{eq:as} in Theorem~\ref{thm:main2},
we have the following:

\begin{prop}\label{prop:2-4-0}
If there is no binary LCD $[2r,4,r]$ code
with dual distance $d^\perp \ge 2$,
then there is no binary LCD $[15s+t,4,d(s,t)]$ code
for every nonnegative integer $s$.
\end{prop}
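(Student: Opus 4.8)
The plan is to apply Theorem~\ref{thm:main2}~(ii) with $(q,k)=(2,4)$, so the main work is verifying that all of its hypotheses hold in the stated range and that the parameters match up. First I would fix $n \ge 7$ and write $n = 15s+t$ with $s \in \ZZ_{\ge 0}$ and $t \in \{0,1,7,8,11,12,14\}$, noting that $[4]_2 = 15$ and $2^{k-1} = 8$ here. The minimum-weight target is $d(s,t) = 8s-1$ for $t=0$ and $d(s,t) = g_2(15s+t,4)$ otherwise; using the explicit formula for $g_2(\cdot,4)$ (equivalently the cited formulas for $d_2(n,4)$ and its lower bounds, or a direct computation from~\eqref{eq:g}) one checks that $\lfloor 8(15s+t)/15\rfloor = 8s + \lfloor 8t/15\rfloor$, so indeed $d(s,t) = 8s + \alpha(t)$ for a constant $\alpha(t)$ depending only on $t$. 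This is exactly assumption~\eqref{eq:as}, so Theorem~\ref{thm:main2} is applicable in principle.

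Next I would pin down the auxiliary integers. By Lemma~\ref{lem:rqcon}, $r = r_{2,15s+t,4,d(s,t)}$ and $s' = s'_{2,15s+t,4,d(s,t)}$ depend only on $t$; concretely, from~\eqref{eq:r2}, $r = 8t - 15\alpha(t)$, which one tabulates for each of the seven values of $t$ (this is the content of Table~\ref{Tab:2-4-0}). One must also check the two side conditions of Theorem~\ref{thm:main2}: that $n = 15s+t \ge k = 4$, which is automatic since $n \ge 7$, and that $2r \ge k = 4$, i.e.\ $r \ge 2$, which is verified case-by-case from the table of $r$-values (each of the seven $t$'s gives $r \ge 2$). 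With these in hand, Theorem~\ref{thm:main2}~(ii) yields: if there is no binary LCD code with parameters $[2r,4,r]$ and dual distance $d^\perp \ge 2$, then there is no binary LCD code with parameters $[15s+t,4,d(s,t)] = [15s+t,4,8s+\alpha(t)]$ for every integer $s \ge s'$.

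It remains to upgrade ``for every $s \ge s'$'' to ``for every nonnegative integer $s$''. For $s=0,1,\dots,s'-1$ this is not covered by part~(ii) directly, but it follows by the same contrapositive argument used in the proof of Theorem~\ref{thm:main2}~(ii): if some LCD $[15s''+t,4,8s''+\alpha(t)]$ code existed with $s'' < s'-1$, then Lemma~\ref{lem:190228-1} (adjoining $s'-1-s''$ copies of $S_{2,4}$) would produce an LCD $[15(s'-1)+t,4,8(s'-1)+\alpha(t)] = [2r,4,r]$ code, and one checks its dual distance is $\ge 2$ (or invoke Proposition~\ref{prop:dd1}), contradicting the hypothesis; the case $s'' = s'-1$ is the hypothesis itself. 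The step I expect to require the most care is the bookkeeping in the second paragraph: correctly evaluating $g_2(15s+t,4)$ from the Griesmer-type formula~\eqref{eq:g}, confirming it equals $8s + \alpha(t)$ for each $t$ (so that~\eqref{eq:as} genuinely holds and is not just asserted), and checking $2r \ge 4$ for the $t=0$ case where $d(s,t)=8s-1$ is one below $g_2$. Once the table entries are validated, the logical structure is an immediate citation of Theorem~\ref{thm:main2}~(ii) plus the short Lemma~\ref{lem:190228-1} argument for small $s$.
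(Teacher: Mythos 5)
Your proposal is correct and follows essentially the same route as the paper: verify assumption~\eqref{eq:as} and the values of $r$ (Table~\ref{Tab:2-4-0}), check $2r\ge k$, and then invoke Theorem~\ref{thm:main2}~(ii) for $(q,k)=(2,4)$. Your extra step extending the range from $s\ge s'$ to all nonnegative $s$ via Lemma~\ref{lem:190228-1} (with Proposition~\ref{prop:dd1} for the dual-distance-$1$ case) is exactly the argument already contained in the paper's proof of Theorem~\ref{thm:main2}~(ii), so it is a welcome clarification rather than a different method.
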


\begin{table}[thbp]
\caption{$d(s,t)$ and $r$ in Proposition~\ref{prop:2-4-0}}
\label{Tab:2-4-0}
\begin{center}
{\small
\begin{tabular}{c|c|c||c|c|c}
\noalign{\hrule height0.8pt}
 $n$ & $d(s,t)$ & $r$ & $n$ & $d(s,t)$ & $r$ \\
\hline
 $15s$   & $8s-1$ &  15 & $15s+11$& $8s+5$ &  13 \\
 $15s+1$ & $8s  $ &   8 & $15s+12$& $8s+6$ &   6 \\
 $15s+7$ & $8s+3$ &  11 & $15s+14$& $8s+7$ &   7 \\
 $15s+8$ & $8s+4$ &   4 & & &     \\
 \noalign{\hrule height0.8pt}
\end{tabular}
}
\end{center}
\end{table}

It is known that
there is no binary LCD $[2r,4,r]$ code for
\[
r \in \{4,6,7,8,11,13,15\},
\]
\cite[Table~14 and Proposition~2]{AH} and~\cite[Table~3]{HS}.
By Proposition~\ref{prop:2-4-0}, 
we have the following:

\begin{prop}
Suppose that $n \ge 4$.  Then
\[
d_2(n,4)=
\begin{cases}
\left\lfloor \frac{8n}{15} \right\rfloor & \text{if } n \equiv 5,9,13 \pmod{15}, \\
\left\lfloor \frac{8n}{15} \right\rfloor-1 & \text{if } n \equiv 1,2,3,4,6,7,8,10,11,12,14 \pmod{15}, \\
\left\lfloor \frac{8n}{15} \right\rfloor-2 & \text{if } n \equiv 0 \pmod{15}.
\end{cases}
\]
\end{prop}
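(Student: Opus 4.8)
The plan is to combine the known lower bounds on $d_2(n,4)$ (quoted from~\cite{AH}) with matching upper bounds obtained from the nonexistence results above. First I would note that for every residue class the stated value of $d_2(n,4)$ is already a lower bound: for $n \equiv 5,9,13 \pmod{15}$ this is exactly the quoted equality, while for the remaining residues the quoted bound is $d_2(n,4) \ge \lfloor 8n/15 \rfloor - 1$ (or $-2$ when $n \equiv 0$), so it only remains to rule out codes with minimum weight one larger than claimed.

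The heart of the argument is the upper bound. For $n \equiv 2,3,4,6,10 \pmod{15}$ the equality is already in~\cite{AH}, so nothing new is needed there. For the residues $t \in \{0,1,7,8,11,12,14\}$ I would invoke Proposition~\ref{prop:2-4-0}: writing $n = 15s+t$ and taking $d(s,t)$ as in Table~\ref{Tab:2-4-0} (which equals $\lfloor 8n/15\rfloor$ for $t=1,7,8,11,12,14$ and equals $8s-1 = \lfloor 8n/15\rfloor - 1$ for $t=0$), the proposition reduces the nonexistence of a binary LCD $[15s+t,4,d(s,t)]$ code to the nonexistence of a binary LCD $[2r,4,r]$ code with dual distance $d^\perp \ge 2$, where $r$ is read off from the table and lies in $\{4,6,7,8,11,13,15\}$. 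By the cited computational results (\cite[Table~14 and Proposition~2]{AH} and~\cite[Table~3]{HS}) no such $[2r,4,r]$ code exists for any of these seven values of $r$. Hence there is no binary LCD $[15s+t,4,d(s,t)]$ code, which gives $d_2(15s+t,4) \le d(s,t)-1$; combined with the lower bound this forces equality $d_2(n,4) = d(s,t)-1$. For $t=0$ this reads $d_2(n,4) = 8s-2 = \lfloor 8n/15\rfloor - 2$, and for the other six residues it reads $d_2(n,4) = \lfloor 8n/15\rfloor - 1$, exactly as claimed.

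I would then just assemble the seven residue classes treated via Proposition~\ref{prop:2-4-0} together with the five residues $2,3,4,6,10$ already settled in~\cite{AH} and the three residues $5,9,13$ where equality is quoted, checking that these twelve residues plus $\{0\}$ exhaust $\ZZ/15\ZZ$ and that in each case the asserted formula matches. Two small bookkeeping points need care: one must verify that $g_2(15s+t,4)$ in the table genuinely equals $\lfloor 8n/15 \rfloor$ for the relevant $t$ (so that "minimum weight one more than claimed" is precisely $d(s,t)$ and the Griesmer-type value $g_2$ does not allow an even larger weight to sneak in), and one must confirm that the hypothesis $qr \ge k$, i.e. $2r \ge 4$, of Theorem~\ref{thm:main2} holds — which it does since $r \ge 4$ throughout the table. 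The main obstacle is essentially none beyond careful case-tracking: all the genuine content (the nonexistence of the small $[2r,4,r]$ LCD codes) is imported from prior work and from Proposition~\ref{prop:2-4-0}, so the proof is a short synthesis rather than a new computation.
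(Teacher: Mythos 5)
Your overall route is the same as the paper's: lower bounds quoted from~\cite{AH}, upper bounds obtained by feeding the known nonexistence of binary LCD $[2r,4,r]$ codes for $r \in \{4,6,7,8,11,13,15\}$ into Proposition~\ref{prop:2-4-0}. For the residues $t \in \{1,7,8,11,12,14\}$ this is complete, since there $d(s,t)=g_2(15s+t,4)=\left\lfloor 8n/15\right\rfloor$, so ruling out minimum weight exactly $d(s,t)$ does cap $d_2$ at $d(s,t)-1$.

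The gap is the residue class $n \equiv 0 \pmod{15}$. There you take $d(s,0)=8s-1=\left\lfloor 8n/15\right\rfloor-1$, and from the nonexistence of LCD $[15s,4,8s-1]$ codes you conclude $d_2(15s,4)\le 8s-2$. That inference is not valid as stated: in this paper $[n,k,d]$ means minimum weight exactly $d$, and $g_2(15s,4)=8s$, so the Griesmer bound still permits an LCD $[15s,4,8s]$ code, which your argument never excludes (nonexistence at weight $8s-1$ does not propagate upward, since puncturing need not preserve the LCD property). Your own ``bookkeeping check'' that $g_2$ equals $\left\lfloor 8n/15\right\rfloor$ so nothing larger can sneak in is exactly the point that fails at $t=0$. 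The paper closes this with Lemma~\ref{lem:n0}: a $[15s,4,8s]$ code meets the Griesmer bound with $n \equiv 0 \pmod{[4]_2}$, hence by Bonisoli's theorem it is equivalent to the $s$-fold repeated simplex code, which is self-orthogonal and therefore not LCD. Adding that one citation repairs the proof; everything else in your synthesis matches the paper's argument.
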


\subsubsection{Classification of binary optimal LCD codes of dimension 4}

We apply Theorem~\ref{thm:main2} (i) to the case $(q,k)=(2,4)$.
For $n \ge 4$,
write $n=15s+t$, where $s \in \ZZ_{\ge 0}$ and $t \in \{0,1,\ldots,14\}$.
Let $r=r_{2,15s+t,4,d_2(15s+t,4)}$ and $s'=s'_{2,15s+t,4,d_2(15s+t,4)}$
be the integers defined
in~\eqref{eq:r} and~\eqref{eq:s0}, respectively.
For each $15s+t$, we list $d_2(15s+t,4)$, $s'$ and $r$ in
Table~\ref{Tab:2-4}.
Then $d_2(15s+t,4)$ is written as
$8s+\alpha(t)$, where $\alpha(t)$ is a constant depending on only $t$.
Since the minimum weight satisfies 
the assumption~\eqref{eq:as} in Theorem~\ref{thm:main2},
we have the following:

\begin{prop}\label{prop:2-4}
There is a one-to-one correspondence between
equivalence classes of binary LCD
$[2r,4,r]$ codes with dual distances $d^\perp \ge 2$
and
equivalence classes of binary LCD $[15s+t,4,d_2(15s+t,4)]$ codes 
with dual distances $d^\perp \ge 2$
for every integer $s \ge s'$.
\end{prop}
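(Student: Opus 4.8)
The plan is to simply invoke Theorem~\ref{thm:main2}~(i) with the specialization $(q,k)=(2,4)$, so that $[k]_q = [4]_2 = 15$ and $q^{k-1} = 8$. First I would note that for $n \ge 4$ we may write $n = 15s + t$ with $s \in \ZZ_{\ge 0}$ and $t \in \{0,1,\ldots,14\}$, which matches the hypothesis of Theorem~\ref{thm:main2} that $n = [k]_q \cdot s + t \ge k$ (here $k=4 \le n$). Next I would verify the assumption~\eqref{eq:as}: from the determination of $d_2(n,4)$ recorded just above (the Proposition in Section~\ref{sec:2-4-D}), for each fixed residue $t$ the value $d_2(15s+t,4)$ equals $8s + \alpha(t)$ with $\alpha(t)$ independent of $s$ — this is exactly the content of Table~\ref{Tab:2-4} and is already asserted in the paragraph preceding the statement. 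Since $q^{k-1} = 8$, the hypothesis of~\eqref{eq:as} holds verbatim.

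With~\eqref{eq:as} in hand, the integers $r = r_{2,15s+t,4,d_2(15s+t,4)}$ and $s' = s'_{2,15s+t,4,d_2(15s+t,4)}$ depend only on $t$ by Lemma~\ref{lem:rqcon}, and their values are listed in Table~\ref{Tab:2-4}; in particular one reads off that $qr = 2r \ge k = 4$ in every case, so the remaining hypothesis $qr \ge k$ of Theorem~\ref{thm:main2} is satisfied. Applying Theorem~\ref{thm:main2}~(i) then yields, for each $t$ and every integer $s \ge s'$, a one-to-one correspondence between equivalence classes of binary LCD codes with dual distance $d^\perp \ge 2$ and parameters $[2r,4,r] = [15(s'-1)+t,\,4,\,8(s'-1)+\alpha(t)]$ on the one hand, and equivalence classes of binary LCD codes with dual distance $d^\perp \ge 2$ and parameters $[15s+t,4,8s+\alpha(t)] = [15s+t,4,d_2(15s+t,4)]$ on the other. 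This is precisely the assertion of the proposition.

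There is essentially no obstacle here: the statement is a direct corollary of the already-proved Theorem~\ref{thm:main2}~(i), and the only things to check are the bookkeeping items — that $n = [4]_2 s + t \ge 4$, that~\eqref{eq:as} holds with $\alpha(t)$ the constants in Table~\ref{Tab:2-4}, and that $2r \ge 4$ for each residue $t$. The mildest point worth spelling out is that the parameters $[2r,4,r]$ appearing in the abstract conclusion~\eqref{eq:mainp1} of Theorem~\ref{thm:main2} coincide with the claimed $[2r,4,r]$; this is immediate from $qr = [k]_q(s'-1)+t$ and $(q-1)r = q^{k-1}(s'-1)+\alpha(t)$, i.e.\ from~\eqref{eq:q1} and~\eqref{eq:q2} specialized to $q=2$, $k=4$. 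So the proof is two lines: observe that the hypotheses of Theorem~\ref{thm:main2} are met for $(q,k)=(2,4)$ by Table~\ref{Tab:2-4} and the formula for $d_2(n,4)$, then quote Theorem~\ref{thm:main2}~(i).
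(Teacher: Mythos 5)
Your proposal is correct and takes essentially the same route as the paper: both verify that $d_2(15s+t,4)=8s+\alpha(t)$ satisfies assumption~\eqref{eq:as} via Table~\ref{Tab:2-4} and then simply quote Theorem~\ref{thm:main2}~(i) for $(q,k)=(2,4)$. The extra details you spell out (checking $2r\ge 4$ and matching the parameters through~\eqref{eq:q1} and~\eqref{eq:q2}) are harmless bookkeeping that the paper leaves implicit.
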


\begin{table}[thbp]
\caption{Binary optimal LCD $[n,4]$ codes with dual distances $d^\perp\ge 2$}
\label{Tab:2-4}
\begin{center}
{\small
\begin{tabular}{c|c|c|c|c}
\noalign{\hrule height0.8pt}
 $n$ & $d_2(n,4)$&$s'$  & $r$ &  $N_2(n,4)$\\
\hline
 $15s$   & $ 8s-2 $&$ 5$  & 30 & 404 \\
 $15s+1$ & $ 8s-1 $&$ 4$  & 23 &  10 \\
 $15s+2$ & $ 8s   $&$ 3$  & 16 &   2 \\
 $15s+3$ & $ 8s   $&$ 4$  & 24 &  39 \\
 $15s+4$ & $ 8s+1 $&$ 3$  & 17 &   2 \\
 $15s+5$ & $ 8s+2 $&$ 2$  & 10 &   1 \\
 $15s+6$ & $ 8s+2 $&$ 3$  & 18 &  10 \\
 $15s+7$ & $ 8s+2 $&$ 4$  & 26 & 121 \\
 $15s+8$ & $ 8s+3 $&$ 3$  & 19 &   2 \\
 $15s+9$ & $ 8s+4 $&$ 2$  & 12 &   1 \\
 $15s+10$& $ 8s+4 $&$ 3$  & 20 &  11 \\
 $15s+11$& $ 8s+4 $&$ 4$  & 28 & 151 \\
 $15s+12$& $ 8s+5 $&$ 3$  & 21 &   9 \\
 $15s+13$& $ 8s+6 $&$ 2$  & 14 &   2 \\
 $15s+14$& $ 8s+6 $&$ 3$  & 22 &  33 \\
 \noalign{\hrule height0.8pt}
\end{tabular}
}
\end{center}
\end{table}

\begin{table}[thbp]
\caption{$N_2(n,4)$}
\label{Tab:2-4-2}
\begin{center}
{\small
 \begin{tabular}{c|c||c|c||c|c||c|c}
\noalign{\hrule height0.8pt}
$n$ & $N_2(n,4)$ &$n$ & $N_2(n,4)$ &
$n$ & $N_2(n,4)$ &$n$ & $N_2(n,4)$ \\
\hline
 5&  1 &16&  7 &27&  9 &40& 11 \\
 6&  3 &17&  2 &28&  2 &41&151 \\
 7&  5 &18& 20 &29& 33 &42&  9 \\
 8&  1 &19&  2 &30&310 &44& 33 \\
 9&  1 &20&  1 &31& 10 &45&404 \\
10&  4 &21& 10 &32&  2 &46& 10 \\
11& 15 &22& 76 &33& 39 &48& 39 \\
12&  6 &23&  2 &34&  2 &52&121 \\
13&  2 &24&  1 &36& 10 &56&151 \\
14& 14 &25& 11 &37&121 &60&404 \\
15& 73 &26&106 &38&  2 &&\\
\noalign{\hrule height0.8pt}
 \end{tabular}
}
\end{center}
\end{table}

A classification of binary LCD codes was done in~\cite[Table~6]{AH-C}
for lengths up to $13$.
We calculated the numbers $N_2(n,4)$ of the inequivalent
binary optimal LCD $[n,4]$ codes with dual distances
$d^\perp \ge 2$, where
the numbers $N_2(n,4)$ are listed in Table~\ref{Tab:2-4-2}.

\begin{itemize}
\item Case $s \ge s'$:
By the method given in Section~\ref{sec:Cmethod},
our computer search
completes a classification of binary
LCD $[2r,4,r]$ codes with dual distances
$d^\perp \ge 2$ for 
\begin{equation*}\label{eq:2-4-r}
 r\in \{10,
 12,
 14,
 16,
 17,
 18,
 19,
 20,
 21,
 22,
 23,
 24,
 26,
 28,
 30\}.
\end{equation*}
The numbers $N_2(2r,4)$ of the inequivalent
binary LCD $[2r,4,r]$ codes with dual distances
$d^\perp \ge 2$ are listed in Table~\ref{Tab:2-4-2}.
These codes $B_{2r,4,i}$ $(i\in\{1,2,\ldots,N_2(2r,4)\})$
are presented by codes $C_{2,4}(m)$
with generator matrices $G_{2,4}(m)$ of form~\eqref{eq:Gqkm}.
For $r \in \{10,12,14,16,17,19\}$,
in order to display the codes $B_{2r,4,i}$,
the vectors $m$
are listed in Table~\ref{Tab:2-4-3}.
For all the parameters $r$, the vectors $m$
are available at
\url{http://www.math.is.tohoku.ac.jp/~mharada/AHS}.

\begin{table}[thb]
\caption{$B_{n,4,i}$}
\label{Tab:2-4-3}
\begin{center}
{\small
\begin{tabular}{c|c|c}
\noalign{\hrule height0.8pt}
$C_{2,4}(m)$ & $(2r,r)$ & $m$ \\\hline
$B_{20,4,1}$&$(20,10)$ &$(2,2,1,2,1,1,1,2,1,1,1,1,1,1,2)$\\
$B_{24,4,1}$&$(24,12)$ &$(2,2,2,2,2,1,1,2,1,2,1,2,1,1,2)$\\
$B_{28,4,1}$&$(28,14)$ &$(3,3,1,3,1,1,2,3,1,1,2,1,2,2,2)$\\
$B_{28,4,2}$&          &$(2,2,1,3,2,2,2,3,2,2,2,1,1,1,2)$\\
$B_{32,4,1}$&$(32,16)$ &$(3,3,2,3,2,1,2,3,1,2,2,2,2,2,2)$\\
$B_{32,4,2}$&          &$(3,3,1,3,2,1,3,3,1,2,3,2,2,2,1)$\\
$B_{34,4,1}$&$(34,17)$ &$(3,3,2,3,2,2,2,3,2,2,2,2,2,2,2)$\\
$B_{34,4,2}$&          &$(3,3,2,3,2,2,1,3,2,2,2,2,2,2,3)$\\
$B_{38,4,1}$&$(38,19)$ &$(3,3,3,3,3,2,2,3,2,3,2,3,2,2,2)$\\
$B_{38,4,2}$&          &$(3,3,3,3,3,2,1,3,2,3,2,3,2,2,3)$\\
 \hline
$C_{2,4}(m)$ & $(n,d)$ & $m$ \\\hline
$B_{17,4,1}$&$(17,8)$ & $(2,2,1,2,1,0,1,2,0,1,1,1,1,1,1)$\\
$B_{17,4,2}$&         & $(2,2,0,2,1,0,2,2,0,1,2,1,1,1,0)$\\
$B_{19,4,1}$&$(19,9)$ & $(2,2,1,2,1,1,1,2,1,1,1,1,1,1,1)$\\
$B_{19,4,2}$&         & $(2,2,1,2,1,1,0,2,1,1,1,1,1,1,2)$\\
$B_{23,4,1}$&$(23,11)$& $(2,2,2,2,2,1,1,2,1,2,1,2,1,1,1)$\\
$B_{23,4,2}$&         & $(2,2,2,2,2,1,0,2,1,2,1,2,1,1,2)$\\
 \noalign{\hrule height0.8pt}
 \end{tabular}
}
\end{center}
\end{table}

      By Proposition~\ref{prop:2-4},
we have  a classification of binary optimal
LCD $[15s+t,4]$ codes with dual distances $d^\perp \ge 2$
for $s \ge s'$.

\item Case $s < s'$:
By the method given in Section~\ref{sec:Cmethod},
our computer search
completes a classification of binary optimal
LCD $[n,4]$ codes with dual distances
$d^\perp \ge 2$ for 
\begin{equation*}\label{eq:2-4-n}
n \in \{14,15,16,17,18,19,21,22,23,25,
26,27,29,30,31,33,37,41,45\}.
\end{equation*}
The numbers $N_2(n,4)$ are listed in Table~\ref{Tab:2-4-2}.
These codes $B_{n,4,i}$ $(i\in\{1,2,\ldots,N_2(n,4)\})$
are presented by codes $C_{2,4}(m)$
with generator matrices $G_{2,4}(m)$ of form~\eqref{eq:Gqkm}.
For $(n,d) \in \{(17,8),(19,9),(23,11)\}$,
in order to display the codes $B_{n,4,i}$,
the vectors $m$
are listed in Table~\ref{Tab:2-4-3}.
For all the lengths $n$, the vectors $m$
are available at
\url{http://www.math.is.tohoku.ac.jp/~mharada/AHS}.
\end{itemize}
Therefore, we have the following:

\begin{prop}\label{prop:2-4-2}
\begin{enumerate}
\item
There are $404$ inequivalent binary optimal LCD $[15s,4]$
codes with dual distances $d^\perp\ge 2$ for every integer $s \ge 3$.
\item Suppose that $t \in \{1,6\}$.
Then there are $10$ inequivalent binary optimal LCD $[15s+t,4]$
codes with dual distances $d^\perp\ge 2$ for every integer $s \ge 2$
if $t=1$ and  $s \ge 1$ if $t=6$.

\item Suppose that $t \in \{2,4,8,13\}$.
Then
there are $2$ inequivalent binary optimal LCD $[15s+t,4]$
codes with dual distances $d^\perp\ge 2$ for every integer
$s \ge 1$ if $t\in\{2,4,8\}$ and $s \ge 0$ if $t=13$.
\item
There are $39$ inequivalent binary optimal LCD $[15s+3,4]$
codes with dual distances $d^\perp\ge 2$ for every integer $s \ge 2$.
\item Suppose that $t \in \{5,9\}$.
Then
there is a unique  binary optimal LCD $[15s+t,4]$
code with dual distance $d^\perp\ge 2$, up to equivalence,
for every integer $s \ge 2$.

\item
There are $121$ inequivalent binary optimal LCD $[15s+7,4]$
codes with dual distances $d^\perp\ge 2$ for every integer $s \ge 2$.
\item
There are $11$ inequivalent binary optimal LCD $[15s+10,4]$
codes with dual distances $d^\perp\ge 2$ for every integer $s \ge 1$.
\item
There are $151$ inequivalent binary optimal LCD $[15s+11,4]$
codes with dual distances $d^\perp\ge 2$ for every integer $s \ge 2$.
\item
There are $9$ inequivalent binary optimal LCD $[15s+12,4]$
codes with dual distances $d^\perp\ge 2$ for every integer $s \ge 1$.
\item
There are $33$ inequivalent binary optimal LCD $[15s+14,4]$
codes with dual distances $d^\perp\ge 2$ for every integer $s \ge 1$.
 \end{enumerate}
\end{prop}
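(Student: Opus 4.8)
The plan is to collapse each of the ten residue classes $t \bmod 15$ into a single finite classification problem by means of Proposition~\ref{prop:2-4}, and then to settle the finitely many exceptional small lengths by direct computation. First I would observe that, specialising \eqref{eq:q1} and \eqref{eq:q2} to $(q,k)=(2,4)$, the parameters $[2r,4,r]$ occurring in Proposition~\ref{prop:2-4} are precisely $[15(s'-1)+t,\,4,\,8(s'-1)+\alpha(t)]$; since every row of Table~\ref{Tab:2-4} has $s'\ge 2$ we have $s'-1\ge 1$, hence $15(s'-1)+t\ge 15\ge 4$, and therefore the determination of $d_2(n,4)$ already proved in Section~\ref{sec:2-4-D} gives $d_2(2r,4)=8(s'-1)+\alpha(t)=r$. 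Thus the binary LCD $[2r,4,r]$ codes with $d^\perp\ge 2$ are exactly the optimal ones at length $2r$, that is, the $s=s'-1$ member of the family, and Proposition~\ref{prop:2-4} tells us that their number $N_2(2r,4)$ equals the number of binary optimal LCD $[15s+t,4]$ codes with $d^\perp\ge 2$ for every $s\ge s'-1$.

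Second, I would classify the binary LCD $[2r,4,r]$ codes with $d^\perp\ge 2$ for the fifteen values of $r$ listed in Table~\ref{Tab:2-4} by the procedure of Section~\ref{sec:Cmethod}: run over all $m\in\ZZ_{\ge 0}^{15}$ with $m_1,m_2,m_4,m_8\ge 1$ whose coordinates all lie in the interval \eqref{eq:mi}, keep those $m$ for which $G_{2,4}(m)G_{2,4}(m)^{T}$ is nonsingular, and sort the resulting codes into equivalence classes via the associated digraphs and {\tt nauty}~\cite{nauty}. This yields the numbers $N_2(2r,4)$ recorded in Table~\ref{Tab:2-4-2}, and by the first step these are the counts of binary optimal LCD $[15s+t,4]$ codes with $d^\perp\ge 2$ for all $s\ge s'-1$.

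Third, for each $t$ I would dispose of the remaining lengths $15s+t\ge 4$ with $s<s'-1$: those with $15s+t\le 13$ are already covered by the classification in \cite{AH-C}, and the finitely many others (the lengths $n$ with $14\le n\le 45$ not yet handled) are classified directly by the same method of Section~\ref{sec:Cmethod}, which supplies the remaining entries of Table~\ref{Tab:2-4-2}. For each residue class the claimed lower bound on $s$ is then read off as the smallest $s$ from which $N_2(15s+t,4)$ coincides with $N_2(2r,4)$; for instance $N_2(45,4)=N_2(60,4)=404$ whereas $N_2(30,4)=310$, which forces $s\ge 3$ in item (i), and likewise $N_2(21,4)=N_2(36,4)=10$ whereas $N_2(6,4)=3$, which forces $s\ge 1$ in the $t=6$ case of item (ii). Assembling the ten residue classes then gives the proposition.

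The one genuine difficulty is \emph{computational}: the enumeration-and-isomorphism classification of the larger codes --- chiefly $[60,4,30]$ with its $404$ inequivalent codes, $[56,4,28]$ with $151$, and $[52,4,26]$ with $121$, together with the small lengths up to $n=45$ --- is where essentially all of the effort lies, and it is these runs whose correctness should be independently checked (for example by re-verifying with {\sc Magma}~\cite{Magma} that every listed code is LCD with $d^\perp\ge 2$ and that the list has no repetitions, as indicated in Section~\ref{sec:Cmethod}). Once those finite lists are in hand, the rest of the argument is the bookkeeping described above.
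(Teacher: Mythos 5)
Your proposal is correct and follows essentially the same route as the paper: it invokes Proposition~\ref{prop:2-4} (i.e.\ Theorem~\ref{thm:main2}~(i) specialised to $(q,k)=(2,4)$), classifies the binary LCD $[2r,4,r]$ codes with $d^\perp\ge 2$ by the computational method of Section~\ref{sec:Cmethod}, and settles the remaining small lengths ($n\le 13$ via~\cite{AH-C}, the rest up to $n=45$ by direct search), exactly as in the paper's case analysis $s\ge s'$ versus $s<s'$. Your extra observation that $d_2(2r,4)=r$, so that the $[2r,4,r]$ classification is itself the $s=s'-1$ member of each family, is a harmless repackaging of the same bookkeeping.
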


\begin{rem}
If there is a binary $[n,n-4,d]$ code with $d \ge 3$, then
$n \le 15$ by the sphere-packing bound. 
Thus, the dual distances of all the codes in the above proposition
are exactly $2$.
\end{rem}

By Lemma~\ref{lem:class}, the above proposition
completes a classification of binary
optimal LCD codes of dimension $4$.

\section{Ternary optimal LCD codes}
\label{sec:3}

\subsection{Ternary optimal LCD codes of dimension 2}

In this subsection, we give
a classification of ternary optimal LCD codes of dimension $2$.

\subsubsection{Determination of $d_3(n,2)$}
Here we show that
$d_3(n,2)=\left\lfloor \frac{3n}{4} \right\rfloor$ if $n \equiv 1,2 \pmod{4}$ and
$d_3(n,2)=\left\lfloor \frac{3n}{4} \right\rfloor-1$ if $n \equiv 0,3 \pmod{4}$.

It is trivial that $\FF_3^2$ is the ternary LCD $[2,2,1]$ code.
By~\cite[Proposition~5 and Table~4]{AH-C},
there is a ternary LCD $[n,2,d]$ code for
$(n,d)\in\{(3,1),(4,2),(5,3)\}$.
Suppose that $d=g_3(n,2)$ if $n \equiv 1,2 \pmod{4}$ and 
$d=g_3(n,2)-1$ otherwise, 
where the values $g_3(n,2)$ are listed in  Table~\ref{table:G-bound-3-2}.
By Lemma~\ref{lem:190228-1}, ternary LCD $[n,2,d]$ codes
are constructed for $n \ge 2$.

\begin{table}[thb]
\caption{$g_3(n,2)$}
\label{table:G-bound-3-2}
\begin{center}
{\small
\begin{tabular}{c|c||c|c}
\noalign{\hrule height0.8pt}
$n$ & $g_3(n,2)$ & $n$ & $g_3(n,2)$ \\
\hline
$4s$   & $3s$   & $4s+2$ & $3s+1$ \\ 
$4s+1$ & $3s$   & $4s+3$ & $3s+2$ \\
\noalign{\hrule height0.8pt}
\end{tabular}
}
\end{center}
\end{table}

By Lemma~\ref{lem:n0},
there is no ternary LCD $[4s,2,3s]$ code
for every positive integer $s$.
We investigate the existence of a ternary LCD $[4s+3,2,g_3(4s+3,2)]$ code.
Then $g_3(4s+3,2)$ is written as $3s+2$.
Thus, the minimum weight satisfies 
the assumption~\eqref{eq:as} in Theorem~\ref{thm:main2}.
Also, there is no ternary LCD $[3,2,2]$ code~\cite[Proposition~5]{AH-C}.
Hence, by applying Theorem~\ref{thm:main2} (ii), we have the following:

\begin{prop}
There is no ternary LCD $[4s+3,2,3s+2]$ code
for every positive integer $s$.
\end{prop}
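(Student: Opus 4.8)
The plan is to apply Theorem~\ref{thm:main2}~(ii) with $(q,k)=(3,2)$, so that $[k]_q=4$ and $q^{k-1}=3$. First I would write $n=4s+3$, i.e.\ take $t=3$ in the notation of Theorem~\ref{thm:main2}, and set $d(s,3)=3s+2$. One checks that this fits the required form $d(s,t)=q^{k-1}s+\alpha(t)$ with $\alpha(3)=2$, so the assumption~\eqref{eq:as} holds; this has already been noted in the paragraph preceding the statement. It remains only to compute the integer $r=r_{3,4s+3,2,3s+2}$ and verify $qr\ge k$, and then to identify the ``small length'' code whose nonexistence we need.

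By~\eqref{eq:r2}, $r=r_{3,4s+3,2,d(s,3)}=q^{k-1}t-[k]_q\alpha(t)=3\cdot 3-4\cdot 2=1$, which is independent of $s$ as guaranteed by Lemma~\ref{lem:rqcon}. Hence $qr=3\ge 2=k$, so the hypothesis $qr\ge k$ of Theorem~\ref{thm:main2} is satisfied. The conclusion of Theorem~\ref{thm:main2}~(ii) is then: if there is no ternary LCD code with dual distance $d^\perp\ge 2$ and parameters $[qr,k,(q-1)r]=[3,2,2]$, then there is no ternary LCD code with parameters $[4s+3,2,3s+2]$ for every integer $s\ge s'$. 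Here I should also note that $s'=s'_{3,4s+3,2,d(s,3)}$ is, by~\eqref{eq:s0}, $(qr-t)/[k]_q+1=(3-3)/4+1=1$, so the statement ``for every integer $s\ge s'$'' is exactly ``for every positive integer $s$,'' matching the claim.

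The remaining ingredient is the nonexistence of a ternary LCD $[3,2,2]$ code, which I would simply quote from~\cite[Proposition~5]{AH-C} (a classification of short ternary LCD codes). One small subtlety: Theorem~\ref{thm:main2}~(ii) as applied gives the nonexistence of a ternary LCD $[3,2,2]$ code \emph{with dual distance $d^\perp\ge 2$} as its hypothesis, so strictly I need that there is no ternary LCD $[3,2,2]$ code at all (which is the stronger statement quoted), or equivalently invoke Proposition~\ref{prop:dd1} to reduce to the $d^\perp\ge 2$ case; either way the hypothesis of Theorem~\ref{thm:main2}~(ii) is met. Putting these together yields the claim.

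There is essentially no obstacle here: the proof is a bookkeeping verification that the general machinery of Theorem~\ref{thm:main2} applies with the correct values of $t$, $r$, and $s'$, together with one cited nonexistence fact for length~$3$. The only thing to be careful about is the arithmetic giving $r=1$ and $s'=1$, and making sure the small-length case $n=3$ is genuinely settled by the reference rather than left as a separate obligation.
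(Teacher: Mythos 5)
Your proposal is correct and follows the paper's own route: the paper likewise obtains this proposition by applying Theorem~\ref{thm:main2}~(ii) with $(q,k)=(3,2)$, $t=3$ and $d(s,3)=3s+2=g_3(4s+3,2)$, quoting the nonexistence of a ternary LCD $[3,2,2]$ code from \cite[Proposition~5]{AH-C}. Your explicit bookkeeping ($r=1$, $s'=1$, $qr\ge k$, and the remark that the cited nonexistence is stronger than the $d^\perp\ge 2$ hypothesis) is exactly what the paper leaves implicit.
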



Therefore, we have the following:
\begin{prop}
Suppose that $n \ge 2$.  Then
\[
d_3(n,2)=
\begin{cases}
\left\lfloor \frac{3n}{4} \right\rfloor & \text{if } n \equiv 1,2 \pmod{4}, \\
\left\lfloor \frac{3n}{4} \right\rfloor-1 & \text{if } n \equiv 0,3 \pmod{4}.
\end{cases}
\]
\end{prop}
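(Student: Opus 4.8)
The plan is to assemble the formula for $d_3(n,2)$ from the partial results already laid out in this subsection, splitting into the four residue classes of $n$ modulo $4$. First I would record the upper bound: by the Griesmer bound every ternary $[n,2]$ code has minimum weight at most $g_3(n,2)$, whose values appear in Table~\ref{table:G-bound-3-2}, namely $g_3(4s,2)=3s$, $g_3(4s+1,2)=3s$, $g_3(4s+2,2)=3s+1$, $g_3(4s+3,2)=3s+2$; note that in all cases $g_3(n,2)=\lfloor 3n/4\rfloor$. For the residues $n\equiv 0,3\pmod 4$ this Griesmer value is \emph{not} attained by an LCD code: Lemma~\ref{lem:n0} (with $(q,k_0)=(3,2)$) rules out a ternary LCD $[4s,2,3s]$ code for $s\ge 1$, and the preceding Proposition rules out a ternary LCD $[4s+3,2,3s+2]$ code for $s\ge 1$. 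Hence for $n\equiv 0,3\pmod 4$ with $n\ge 3$ we get $d_3(n,2)\le \lfloor 3n/4\rfloor-1$, and for $n\equiv 1,2\pmod 4$ we trivially have $d_3(n,2)\le\lfloor 3n/4\rfloor$.

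Next I would establish the matching lower bounds, i.e.\ construct LCD codes meeting the claimed value of $d_3(n,2)$ for every $n\ge 2$. The base cases are already given: $\FF_3^2$ is a ternary LCD $[2,2,1]$ code, and by \cite[Proposition~5 and Table~4]{AH-C} there are ternary LCD codes with parameters $[3,2,1]$, $[4,2,2]$, $[5,2,3]$. For general $n$, set $d=g_3(n,2)$ if $n\equiv 1,2\pmod 4$ and $d=g_3(n,2)-1$ otherwise. Writing $n=4s+t$ with $t\in\{0,1,2,3\}$, one checks that $d$ then equals $3s'_{0}+\alpha(t)$ with base length $t$ (or the small base cases above when $s$ is minimal), so the target code arises from a smaller one by appending copies of the simplex generator matrix $S_{3,2}$; concretely Lemma~\ref{lem:190228-1} (with $k=k_0=2$, which satisfies $k\ge k_0$) takes an LCD $[n_0,2,d_0]$ code to an LCD $[n_0+4s,2,d_0+3s]$ code. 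Starting from the appropriate base case in each residue class, this yields ternary LCD $[n,2,d]$ codes for all $n\ge 2$, giving $d_3(n,2)\ge d$ in every case.

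Combining the two bounds residue class by residue class: for $n\equiv 1,2\pmod 4$ we have $\lfloor 3n/4\rfloor\le d_3(n,2)\le \lfloor 3n/4\rfloor$, and for $n\equiv 0,3\pmod 4$ we have $\lfloor 3n/4\rfloor-1\le d_3(n,2)\le \lfloor 3n/4\rfloor-1$; this is exactly the claimed formula. (The small lengths $n=2,3,4,5$ are handled directly by the base cases, and one should double-check that the constructed $d$ indeed matches $\lfloor 3n/4\rfloor$ resp.\ $\lfloor 3n/4\rfloor-1$ there, which is immediate from the table of $g_3(n,2)$.)

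The main obstacle is bookkeeping rather than any genuine difficulty: one must verify that in each residue class the chosen value of $d$ is consistent with Lemma~\ref{lem:190228-1}'s additive step $d\mapsto d+3$ under $n\mapsto n+4$, i.e.\ that $d$ really has the form ``$3s+{}$(constant depending only on $t$)'', and that the base case picked for each $t$ is one of the explicitly known LCD codes. Once the arithmetic of $g_3(n,2)$ versus $\lfloor 3n/4\rfloor$ and the offsets $\alpha(t)$ are tabulated, the proof is a routine synthesis of Lemma~\ref{lem:n0}, the preceding Proposition, Lemma~\ref{lem:190228-1}, and the base cases from \cite{AH-C}.
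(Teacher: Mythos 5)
Your proposal is correct and follows essentially the same route as the paper: lower bounds from the base cases $[2,2,1]$, $[3,2,1]$, $[4,2,2]$, $[5,2,3]$ extended by Lemma~\ref{lem:190228-1} (appending copies of $S_{3,2}$), and upper bounds from the Griesmer value $g_3(n,2)=\lfloor 3n/4\rfloor$ together with Lemma~\ref{lem:n0} for $n\equiv 0\pmod 4$ and the preceding nonexistence proposition for $n\equiv 3\pmod 4$. This is exactly how the paper assembles the formula, so no further comment is needed.
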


\begin{rem}
 After we completed this work, we became aware of~\cite{PZK},
 where the above proposition was given by
 using an elementary method.
\end{rem}

\subsubsection{Classification of ternary optimal LCD codes of dimension 2}

We apply Theorem~\ref{thm:main2} (i) to the case $(q,k)=(3,2)$.
For $n \ge 2$,
write $n=4s+t$, where $s \in \ZZ_{\ge 0}$ and $t \in \{0,1,2,3\}$.
Let $r=r_{3,4s+t,2,d_3(4s+t,2)}$ and $s'=s'_{3,4s+t,2,d_3(4s+t,2)}$
be the integers defined
in~\eqref{eq:r} and~\eqref{eq:s0}, respectively.
For each $4s+t$, we list $d_3(4s+t,2)$, $s'$ and $r$ in Table~\ref{Tab:3-2}.
Then $d_3(4s+t,2)$ is written as
$3s+\alpha(t)$, where $\alpha(t)$ is a constant depending on only $t$.
Since the minimum weight satisfies 
the assumption~\eqref{eq:as} in Theorem~\ref{thm:main2},
we have the following:

\begin{prop}\label{prop:3-2}
There is a one-to-one correspondence between
equivalence classes of ternary LCD
 $[3r,2,2r]$ codes with dual distances $d^\perp \ge 2$
 and
 equivalence classes of ternary LCD $[4s+t,2,d_3(4s+t,2)]$ codes
with dual distances $d^\perp \ge 2$
for every integer $s \ge s'$.
\end{prop}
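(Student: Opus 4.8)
\textbf{Proof proposal for Proposition~\ref{prop:3-2}.}
The plan is to invoke Theorem~\ref{thm:main2}~(i) directly with $(q,k_0)=(3,2)$. First I would check the hypotheses of that theorem one at a time. We have $(q,k_0)=(3,2)\in\{(2,3),(3,2)\}$ and $k=k_0=2\ge k_0$, so the structural assumptions hold. Writing $n=4s+t=[2]_3\cdot s+t$ with $s\in\ZZ_{\ge0}$ and $t\in\{0,1,2,3\}$ is exactly the decomposition $n=[k]_q\cdot s+t$ with $t\in\{0,1,\dots,[k]_q-1\}$ required in Theorem~\ref{thm:main2}, since $[2]_3=\frac{3^2-1}{3-1}=4$. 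The constraint $n\ge k$ is automatic for $n\ge2=k$, which is the range stated in the proposition.

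Next I would verify assumption~\eqref{eq:as}: that $d=d_3(4s+t,2)$ has the form $q^{k-1}s+\alpha(t)=3s+\alpha(t)$ with $\alpha(t)$ depending only on $t$. By the formula for $d_3(n,2)$ proved just above (namely $d_3(n,2)=\lfloor 3n/4\rfloor$ when $n\equiv1,2\pmod4$ and $\lfloor 3n/4\rfloor-1$ when $n\equiv0,3\pmod4$), one reads off $\alpha(0)=-1$, $\alpha(1)=0$, $\alpha(2)=1$, $\alpha(3)=1$; these are the entries recorded in Table~\ref{Tab:3-2}. Since $\lfloor 3(4s+t)/4\rfloor=3s+\lfloor 3t/4\rfloor$, this is a routine check for each of the four residues, and it confirms that the minimum weight of an optimal ternary LCD $[n,2]$ code satisfies~\eqref{eq:as}. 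The remaining hypothesis is $qr\ge k$, i.e. $3r\ge2$, where $r=r_{3,4s+t,2,d_3(4s+t,2)}$; by~\eqref{eq:r2} this $r$ depends only on $t$ (Lemma~\ref{lem:rqcon}), and from Table~\ref{Tab:3-2} one sees $r\in\{4,1,2,5\}$ for $t\in\{0,1,2,3\}$, so $3r\ge3\ge2=k$ holds in every case.

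With all hypotheses of Theorem~\ref{thm:main2}~(i) in place, its conclusion gives a one-to-one correspondence between equivalence classes of ternary LCD codes with dual distance $d^\perp\ge2$ and parameters $[qr,k,(q-1)r]=[3r,2,2r]$ and equivalence classes of ternary LCD codes with $d^\perp\ge2$ and parameters $[[k]_q\cdot s+t,k,q^{k-1}s+\alpha(t)]=[4s+t,2,d_3(4s+t,2)]$, valid for every integer $s\ge s'$, where $s'=s'_{3,4s+t,2,d_3(4s+t,2)}$ is the integer from~\eqref{eq:s0} (again depending only on $t$, by Lemma~\ref{lem:rqcon}, with the values listed in Table~\ref{Tab:3-2}). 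Identifying $[qr,k,(q-1)r]$ with $[3r,2,2r]$ is immediate since $q=3$, $k=2$; and $[4s+t,2,d_3(4s+t,2)]$ is precisely the parameter set of an optimal ternary LCD $[4s+t,2]$ code, because $d_3(4s+t,2)$ is by definition the largest attainable minimum weight. This is exactly the stated conclusion, so the proof is essentially just a verification that the general machinery applies. The only place where anything could go wrong is the bookkeeping in checking~\eqref{eq:as} and computing $r$, $s'$ from~\eqref{eq:r2} and~\eqref{eq:s0} for each residue $t$; I would expect this to be the main (though still routine) obstacle, and I would present it compactly by pointing to Table~\ref{Tab:3-2} rather than writing out all four cases.
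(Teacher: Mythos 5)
Your proposal is correct and follows essentially the same route as the paper's own proof: both simply verify that $d_3(4s+t,2)=3s+\alpha(t)$ satisfies assumption~\eqref{eq:as}, read off $r$ and $s'$ from Table~\ref{Tab:3-2} (using Lemma~\ref{lem:rqcon}), check $qr\ge k$, and invoke Theorem~\ref{thm:main2}~(i) with $(q,k)=(3,2)$. One trivial slip: for $t=1$ the value is $r=3$ (by \eqref{eq:r2}, $r=3t-4\alpha(t)$), not $r=1$, but since $3r\ge 2$ holds in either case this does not affect the argument.
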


\begin{table}[thbp]
\caption{Ternary optimal LCD $[n,2]$ codes with  dual distances $d^\perp\ge 2$}
\label{Tab:3-2}
\begin{center}
{\small
\begin{tabular}{c|c|c|c|c}
\noalign{\hrule height0.8pt}
 $n$ &$d_3(n,2)$& $s'$ & $r$ & $N_3(n,2)$\\
\hline
 $4s  $ & $3s-1$ &$4$ & 4 & 2\\
 $4s+1$ & $3s$   &$3$ & 3 & 1\\
 $4s+2$ & $3s+1$ &$2$ & 2 & 1\\
 $4s+3$ & $3s+1$ &$4$ & 5 & 3\\
\noalign{\hrule height0.8pt}
\end{tabular}
}
\end{center}
\end{table}

\begin{table}[thbp]
\caption{$N_3(n,2)$}
\label{Tab:3-2-2}
\begin{center}
{\small
\begin{tabular}{c|c||c|c||c|c||c|c}
\noalign{\hrule height0.8pt}
$n$ & $N_3(n,2)$&$n$ & $N_3(n,2)$&
$n$ & $N_3(n,2)$&$n$ & $N_3(n,2)$\\
\hline
  3& 1&  6& 1&  9& 1& 12& 2\\
  4& 2&  7& 2& 10& 1& 15& 3\\
  5& 1&  8& 2& 11& 3&   &  \\
\noalign{\hrule height0.8pt}
\end{tabular}
}
\end{center}
\end{table}

A classification of ternary LCD codes
is known~\cite[Table~4]{AH-C} for lengths up to $10$.
We calculated the numbers $N_3(3r,2)$ of inequivalent
ternary LCD $[3r,2,2r]$ codes with dual distances $d^\perp \ge 2$,
where the numbers $N_3(3r,2)$ are listed in Table~\ref{Tab:3-2-2}.

\begin{itemize}
\item Case $s \ge s'$:
From Table~\ref{Tab:3-2-2},
it is known that $N_3(6,2)=1$ and $N_3(9,2)=1$.
By the method given in Section~\ref{sec:Cmethod},
our computer search completes a classification of ternary 
LCD $[3r,2,2r]$ codes with dual distances
$d^\perp \ge 2$ for $r \in \{4,5\}$. 
The numbers $N_3(3r,2)$ are listed in Table~\ref{Tab:3-2-2}.
For $n \in \{12,15\}$,
the codes $T_{n,2,i}$ $(i\in\{1,2,\ldots,N_3(n,2)\})$
are presented by codes $C_{3,2}(m)$
with generator matrices $G_{3,2}(m)$ of form~\eqref{eq:Gqkm}.
In order to display the codes $T_{n,2,i}$,
the vectors $m$
are listed in Table~\ref{Tab:3-2-3}.
By Proposition~\ref{prop:3-2},
we have a classification of ternary optimal
LCD $[4s+t,2]$ codes  with dual distances $d^\perp \ge 2$
for $s \ge s'$.

\item Case $s < s'$:
By the method given in Section~\ref{sec:Cmethod},
our computer search shows that there are $3$ inequivalent
ternary optimal LCD $[11,2,7]$ codes with dual distances
$d^\perp \ge 2$.
The codes $T_{11,2,i}$ $(i\in\{1,2,3\})$
are presented by codes $C_{3,2}(m)$
with generator matrices $G_{3,2}(m)$ of form~\eqref{eq:Gqkm}.
In order to display the codes $T_{11,2,i}$,
the vectors $m$ are listed in Table~\ref{Tab:3-2-3}.
\end{itemize}
Therefore, we have the following:

%

\begin{prop}\label{prop:3-2-2}
\begin{enumerate}
\item
There are $2$ inequivalent ternary optimal LCD
$[4s,2]$ codes
with dual distances $d^\perp\ge 2$ for every integer $s \ge 1$.

\item Suppose that $t \in \{1,2\}$.  Then
there is a unique ternary optimal LCD
$[4s+t,2]$ code with dual distance $d^\perp\ge 2$,
up to equivalence, for every integer $s \ge 1$.

\item There are $3$ inequivalent ternary optimal LCD
$[4s+3,2]$ codes
with dual distances $d^\perp\ge 2$ for every integer $s \ge 2$.
 \end{enumerate}
\end{prop}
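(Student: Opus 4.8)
The plan is to combine the classification results obtained for small lengths with the structural correspondence of Theorem~\ref{thm:main2}~(i), exactly as in the $(q,k)=(2,3)$ case treated in Proposition~\ref{prop:2-3-2}. First I would record, from the preceding discussion, that by the method of Section~\ref{sec:Cmethod} one has a complete classification of ternary optimal LCD $[3r,2,2r]$ codes with dual distance $d^\perp\ge 2$ for $r\in\{2,3,4,5\}$, giving the counts $N_3(6,2)=N_3(9,2)=1$, $N_3(12,2)=2$, $N_3(15,2)=3$ listed in Table~\ref{Tab:3-2-2}, together with the classification of ternary optimal LCD codes for lengths up to $11$ taken from \cite[Table~4]{AH-C} and the explicit $[11,2,7]$ codes found by computer. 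These are the raw inputs; Table~\ref{Tab:3-2} supplies the values $s'$ and $r$ for each residue $t\in\{0,1,2,3\}$, and the verification that $d_3(4s+t,2)=3s+\alpha(t)$ shows the hypothesis \eqref{eq:as} of Theorem~\ref{thm:main2} holds here.

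Next I would split into the two cases according to the size of $s$, mirroring the bullet structure already used for dimension $3$. For $s\ge s'$, Proposition~\ref{prop:3-2} (which is just Theorem~\ref{thm:main2}~(i) specialized to $(q,k)=(3,2)$) gives a one-to-one correspondence between equivalence classes of ternary optimal LCD $[4s+t,2]$ codes with $d^\perp\ge 2$ and equivalence classes of ternary LCD $[3r,2,2r]$ codes with $d^\perp\ge 2$, where $r=r_{3,4s+t,2,d_3(4s+t,2)}$ depends only on $t$ by Lemma~\ref{lem:rqcon}. Plugging in the four values $r=4,3,2,5$ for $t=0,1,2,3$ and the corresponding counts $N_3(12,2)=2$, $N_3(9,2)=1$, $N_3(6,2)=1$, $N_3(15,2)=3$ yields the stated numbers for all $s\ge s'$. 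For the finitely many remaining values $s<s'$, I would appeal directly to the small-length classifications: the relevant lengths are $4s+t$ with $t\in\{0,1,2,3\}$ and $s<s'\in\{4,3,2,4\}$, i.e.\ $n\in\{4,8,5,9,2,6,3,7,11\}$, and for each of these the count of inequivalent ternary optimal LCD codes with $d^\perp\ge 2$ is read off from Table~\ref{Tab:3-2-2} (with the $n=11$ case coming from the dedicated computer search producing the codes $T_{11,2,i}$). Comparing these with the $s\ge s'$ values shows the counts are already stable on the full range $s\ge 1$ for $t\in\{0,1,2\}$ and $s\ge 2$ for $t=3$, which is precisely the statement of the proposition.

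The only subtlety, and thus the main thing to be careful about rather than a genuine obstacle, is checking the ranges of validity at the boundary: one must confirm that for $t=0,1,2$ the small-length count at $s=1,2,\dots,s'-1$ agrees with the large-length count, and for $t=3$ that the small-length data forces the range to begin at $s=2$ (since the case $n=4\cdot1+3=7$ falls below). This is a bookkeeping check against Table~\ref{Tab:3-2-2} and does not require any new argument. I would then close by invoking Lemma~\ref{lem:class} to pass from codes with $d^\perp\ge 2$ to all ternary optimal LCD codes of dimension $2$, completing the classification; the dual-distance-one codes are accounted for as the augmented codes $\overline{C}$ of shorter optimal LCD codes.
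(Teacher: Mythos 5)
Your proposal is correct and follows essentially the same route as the paper: Proposition~\ref{prop:3-2} (Theorem~\ref{thm:main2}~(i) specialized to $(q,k)=(3,2)$) handles $s \ge s'$, and the small-length data (the known classification for lengths up to $10$ together with the computer searches for $n=11$ and $r\in\{4,5\}$) handles $s<s'$, with the counts then compared against Table~\ref{Tab:3-2-2}. The only nitpick is that your explicit list of leftover lengths omits $n=12$ and $n=15$, but these are exactly the base cases $[3r,2,2r]$ with $r=4,5$ whose counts $N_3(12,2)=2$ and $N_3(15,2)=3$ you already recorded, so nothing is actually missing.
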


\begin{rem}
If there is a ternary $[n,n-2,d]$ code with $d \ge 3$, then
$n \le 4$ by the sphere-packing bound. 
Since $d_3(4,2) \le 2$,
the dual distances of all the codes in the above proposition
are exactly $2$.
\end{rem}

By Lemma~\ref{lem:class}, the above proposition
completes a classification of ternary optimal LCD codes of
dimension $2$.

\begin{table}[thb]
\caption{$T_{n,2,i}$}
\label{Tab:3-2-3}
\begin{center}
{\small
\begin{tabular}{c|c|c||c|c|c}
\noalign{\hrule height0.8pt}
$C_{3,2}(m)$ & $(n,d)$ & $m$ &
$C_{3,2}(m)$ & $(n,d)$ & $m$ \\\hline
$T_{11,2,1}$ &$(11,7)$& $(4,4,3,0)$&$T_{12,2,2}$ &$(12,8)$ & $(3,4,3,2)$\\
$T_{11,2,2}$ &$(11,7)$& $(3,4,3,1)$&$T_{15,2,1}$ &$(15,10)$& $(5,5,4,1)$\\
$T_{11,2,3}$ &$(11,7)$& $(3,4,2,2)$&$T_{15,2,2}$ &$(15,10)$& $(4,5,4,2)$\\
$T_{12,2,1}$ &$(12,8)$& $(4,4,2,2)$&$T_{15,2,3}$ &$(15,10)$& $(4,5,3,3)$\\
 \noalign{\hrule height0.8pt}
 \end{tabular}
}
\end{center}
\end{table}

\subsection{Ternary optimal LCD codes of dimension 3}

In this subsection, we give
a classification of ternary optimal LCD codes of dimension $3$.

\subsubsection{Determination of $d_3(n,3)$}

Here we show that
$d_3(n,3)=
\left\lfloor \frac{9n}{13} \right\rfloor$  if  $n \equiv 4,7,10 
\pmod{13}$ and 
$d_3(n,3)=
\left\lfloor \frac{9n}{13} \right\rfloor-1$  if $n \equiv 
0,1,2,3,5,6,8,9,11,12 \pmod{13}$.

It is trivial that $\FF_3^3$ is the ternary LCD $[3,3,1]$ code.
By~\cite[Proposition~5 and
Table~4]{AH-C}, there is a ternary LCD $[n,3,d]$ code for
$(n,d)\in\{(4,2),(5,2),(6,3),(7,4),(8,4),(9,5),(10,6)\}$.
By considering codes $C_{3,3}(m)$
with generator matrices $G_{3,3}(m)$ of form~\eqref{eq:Gqkm},
we found ternary LCD $[n,3,d]$ codes $T_{n,3}$ for
$(n,d)\in\{(11,6),(12,7),(13,8),(14,8),(15,9)\}$.
In order to display the codes $T_{n,3}$,
the vectors $m$ are
listed in Table~\ref{table:190228-1}.
Suppose that $d=g_3(n,3)$ if  $n \equiv 2,3,4,6,7,10 \pmod{13}$ and
$d=g_3(n,3)-1$ otherwise,
where the values $g_3(n,3)$ are listed in Table~\ref{table:G-bound-3-3}.
By Lemma~\ref{lem:190228-1}, ternary LCD $[n,3,d]$ codes are constructed
for $n \ge 3$.

\begin{table}[thb]
\caption{$T_{n,3}$}
\label{table:190228-1}
\begin{center}
{\small
\begin{tabular}{c|c|c}
\noalign{\hrule height0.8pt}
$C_{3,3}(m)$ & $(n,d)$ & $m$ \\\hline
$T_{11,3}$ & $(11,6)$ & $(1, 2, 2, 0, 1, 1, 0, 0, 2, 0, 0, 0, 2)$\\
$T_{12,3}$ & $(12,7)$ & $(1, 1, 1, 0, 1, 2, 2, 0, 2, 0, 0, 0, 2)$\\
$T_{13,3}$ & $(13,8)$ & $(1, 1, 2, 0, 1, 2, 1, 0, 2, 0, 0, 2, 1)$\\
$T_{14,3}$ & $(14,8)$ & $(1, 1, 2, 0, 2, 2, 2, 0, 2, 0, 0, 0, 2)$\\
$T_{15,3}$ & $(15,9)$ & $(1, 1, 2, 0, 1, 2, 2, 0, 2, 0, 0, 2, 2)$\\
\noalign{\hrule height0.8pt}
 \end{tabular}
}
\end{center}
\end{table}

\begin{table}[thb]
\caption{$g_3(n,3)$}
\label{table:G-bound-3-3}
\begin{center}
{\small
\begin{tabular}{c|c||c|c||c|c}
\noalign{\hrule height0.8pt}
$n$ & $g_3(n,3)$ & $n$ & $g_3(n,3)$ & $n$ & $g_3(n,3)$ \\
\hline
$13s$ & $9s$ & $13s+5$ & $9s+3$ & $13s+10$ & $9s+6$ \\
$13s+1$ & $9s$ & $13s+6$ & $9s+3$ & $13s+11$ & $9s+7$ \\
$13s+2$ & $9s$ & $13s+7$ & $9s+4$ & $13s+12$ & $9s+8$ \\
$13s+3$ & $9s+1$ & $13s+8$ & $9s+5$ &&\\
$13s+4$ & $9s+2$ & $13s+9$ & $9s+6$ &&\\
\noalign{\hrule height0.8pt}
\end{tabular}
}
\end{center}
\end{table}

By Lemma~\ref{lem:n0},
there is no ternary LCD $[13s,3,9s]$ code
for every positive integer $s$.
We apply Theorem~\ref{thm:main} (ii)
to the case $(q,k)=(3,3)$.
For $n \ge 5$,
write $n=13s+t$, where $s \in \ZZ_{\ge 0}$ and $t \in 
\{1,5,8,9,11,12\}$.
Suppose that $d(s,t)=g_3(13s+t,3)$.
Let $r=r_{3,13s+t,3,d(s,t)}$
be the integer defined in~\eqref{eq:r}.
For each $13s+t$, we list $d(s,t)$ and $r$
in Table~\ref{Tab:3-3-0}.
Then $d(s,t)$ is written as
$9s+\alpha(t)$, where $\alpha(t)$ is a constant depending on only $t$.
Since the minimum weight satisfies 
the assumption~\eqref{eq:as} in Theorem~\ref{thm:main2},
we have the following:

\begin{prop}\label{prop:3-3-0}
If there is no ternary LCD $[3r,3,2r]$ code
with dual distance $d^\perp \ge 2$,
then there is no ternary LCD $[13s+t,3,d(s,t)]$ code
for every nonnegative integer $s$.
\end{prop}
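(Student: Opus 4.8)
\textbf{Proof proposal for Proposition~\ref{prop:3-3-0}.}
The plan is to apply Theorem~\ref{thm:main2}~(ii) to the case $(q,k)=(3,3)$, so that $k_0=2$ and $[k]_q=[3]_3=13$. First I would verify the hypotheses of Theorem~\ref{thm:main2}. The pair $(q,k_0)=(3,2)$ is admissible, and $k=3 \ge k_0=2$. We write $n=13s+t$ with $s\in\ZZ_{\ge 0}$ and $t\in\{0,1,\ldots,12\}$; here we restrict to $t\in\{1,5,8,9,11,12\}$, and each such $n\ge 5$ satisfies $n\ge k=3$. For $d(s,t)=g_3(13s+t,3)$, consulting Table~\ref{table:G-bound-3-3} shows that $g_3(13s+t,3)$ has the form $9s+\alpha(t)$ with $\alpha(t)$ depending only on $t$ (explicitly $\alpha(1)=0$, $\alpha(5)=3$, $\alpha(8)=5$, $\alpha(9)=6$, $\alpha(11)=7$, $\alpha(12)=8$); this is exactly assumption~\eqref{eq:as}.

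Next I would compute $r=r_{3,13s+t,3,d(s,t)}$ using~\eqref{eq:r2}: since $q^{k-1}=9$ and $[k]_q=13$, we have $r = 9t - 13\,\alpha(t)$, which by Lemma~\ref{lem:rqcon} depends only on $t$. Evaluating for each $t\in\{1,5,8,9,11,12\}$ yields the values $r$ listed in Table~\ref{Tab:3-3-0}, and in each case one checks $qr = 3r \ge k = 3$ (equivalently $r\ge 1$), which is the remaining hypothesis $qr\ge k$ of Theorem~\ref{thm:main2}. With all hypotheses in place, Theorem~\ref{thm:main2}~(ii) applies: if there is no ternary LCD code with dual distance $d^\perp\ge 2$ and parameters~\eqref{eq:mainp1}, namely $[qr,k,(q-1)r]=[3r,3,2r]$, then there is no ternary LCD code with parameters~\eqref{eq:mainp2}, namely $[13s+t,3,9s+\alpha(t)]=[13s+t,3,d(s,t)]$, for every integer $s\ge s'$. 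Since part~(ii) of Theorem~\ref{thm:main2} is stated for \emph{every} nonnegative integer $s$ (the case $s<s'-1$ being handled in its proof via Lemma~\ref{lem:190228-1}, and $s=s'-1$ being the base case~\eqref{eq:mainp1} itself), the conclusion holds for all $s\ge 0$, which is precisely the assertion of the proposition.

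The only real content here is the bookkeeping: confirming that $g_3(13s+t,3)$ genuinely splits as $9s+\alpha(t)$ for the listed residues $t$ (a routine inspection of Table~\ref{table:G-bound-3-3}, using $9s=q^{k-1}s$), and checking the inequality $qr\ge k$ for each of the six values of $r$. I do not anticipate a genuine obstacle; the substantive work has already been done in establishing Theorem~\ref{thm:main2}. The one point requiring a little care is that the proposition asserts nonexistence for \emph{every nonnegative} $s$, not merely $s\ge s'$, so I would be explicit that this stronger statement is exactly what Theorem~\ref{thm:main2}~(ii) delivers (its proof already reduces smaller $s$ to the case $s=s'-1$ via the padding construction of Lemma~\ref{lem:190228-1}).
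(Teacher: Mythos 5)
Your proposal is correct and follows essentially the same route as the paper: verify that $d(s,t)=g_3(13s+t,3)$ has the form $9s+\alpha(t)$ so that assumption~\eqref{eq:as} holds, compute $r=9t-13\alpha(t)$ (matching Table~\ref{Tab:3-3-0}) with $3r\ge 3$, and invoke Theorem~\ref{thm:main2}~(ii) for $(q,k)=(3,3)$. Your extra remark on why the conclusion covers all nonnegative $s$ (the padding via Lemma~\ref{lem:190228-1} for $s<s'-1$) is exactly the mechanism the paper relies on.
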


\begin{table}[thbp]
\caption{$d(s,t)$ and $r$ in Proposition~\ref{prop:3-3-0}}
\label{Tab:3-3-0}
\begin{center}
{\small
\begin{tabular}{c|c|c||c|c|c}
\noalign{\hrule height0.8pt}
  $n$     &$d(s,t)$ & $r$ &  $n$     &$d(s,t)$ & $r$ \\
\hline
  $13s+1$ &$9s$  &  $9$ &  $13s+9$  &$9s+6$ & $3$ \\
  $13s+5$ &$9s+3$&  $6$ &  $13s+11$ &$9s+7$ & $8$ \\
  $13s+8$ &$9s+5$&  $7$ &  $13s+12$ &$9s+8$ & $4$ \\
  \noalign{\hrule height0.8pt}
\end{tabular}
}
\end{center}
\end{table}

It is known that there is no ternary LCD $[9,3,6]$ code~\cite[Table~4]{AH-C}.
By the method given in Section~\ref{sec:Cmethod},
our computer search shows that there is no ternary LCD $[3r,3,2r]$ code
with dual distance $d^\perp \ge 2$ for $r \in \{4,6,7,8,9\}$. 
By Proposition~\ref{prop:3-3-0}, we have the following:

\begin{prop}
Suppose that $n \ge 3$.  Then
\[
d_3(n,3)=
\begin{cases}
\left\lfloor \frac{9n}{13} \right\rfloor & \text{if } n \equiv 4,7,10 
\pmod{13}, \\
\left\lfloor \frac{9n}{13} \right\rfloor-1 & \text{if } n \equiv 
0,1,2,3,5,6,8,9,11,12 \pmod{13}.
\end{cases}
\]
\end{prop}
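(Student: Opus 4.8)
The plan is to prove the formula by establishing matching upper and lower bounds on $d_3(n,3)$, organised by the residue $t$ of $n$ modulo $13$.

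First I would dispose of the upper bound. The Griesmer bound gives $d_3(n,3)\le g_3(n,3)$ unconditionally, and from Table~\ref{table:G-bound-3-3} one reads off that $g_3(n,3)=\lfloor 9n/13\rfloor$ for $n\equiv 4,7,10\pmod{13}$ and $g_3(n,3)=\lfloor 9n/13\rfloor-1$ for $n\equiv 2,3,6\pmod{13}$; for these six residues the claimed upper bound is therefore immediate. For $n\equiv 0\pmod{13}$, Lemma~\ref{lem:n0} rules out a ternary LCD $[13s,3,g_3(13s,3)]$ code, so $d_3(13s,3)\le g_3(13s,3)-1=\lfloor 9n/13\rfloor-1$. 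For the remaining residues $t\in\{1,5,8,9,11,12\}$ I would apply Proposition~\ref{prop:3-3-0} (that is, Theorem~\ref{thm:main2}(ii) with $(q,k)=(3,3)$ and $d(s,t)=g_3(13s+t,3)$): it reduces the nonexistence of a ternary LCD $[13s+t,3,g_3(13s+t,3)]$ code, for every $s\ge 0$, to the nonexistence of a ternary LCD $[3r,3,2r]$ code with dual distance $d^\perp\ge 2$, where $r=r_{3,13s+t,3,d(s,t)}$ is the value listed in Table~\ref{Tab:3-3-0}; here $r$ ranges over $\{3,4,6,7,8,9\}$. The case $r=3$ is the known nonexistence of a ternary LCD $[9,3,6]$ code~\cite[Table~4]{AH-C}, and the cases $r\in\{4,6,7,8,9\}$ are handled by the method of Section~\ref{sec:Cmethod}: by Lemma~\ref{lem:190125-5} every candidate $C_{3,3}(m)$ has multiplicity vector $m$ subject to~\eqref{eq:mi}, leaving finitely many $m$ to test, and one verifies that $G_{3,3}(m)G_{3,3}(m)^{T}$ is singular for each of them. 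Together this yields $d_3(n,3)\le\lfloor 9n/13\rfloor-1$ for all $n$ with $n\equiv 0,1,2,3,5,6,8,9,11,12\pmod{13}$.

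For the lower bound I would exhibit, for each $n\in\{3,4,\dots,15\}$, a ternary LCD $[n,3,d]$ code with $d^\perp\ge 2$ and $d$ equal to the claimed value of $d_3(n,3)$; these thirteen lengths exhaust all residues modulo $13$. One takes $\FF_3^3$ for $n=3$, the codes of~\cite[Table~4]{AH-C} for $4\le n\le 10$, and the explicit codes $C_{3,3}(m)=T_{n,3}$ of Table~\ref{table:190228-1} for $11\le n\le 15$; each can be taken in the form $C_{3,3}(m)$, i.e.\ with dual distance at least $2$. Because the claimed $d_3(n,3)$ has the form $9s+\alpha(t)$ with $\alpha(t)$ depending only on $t$, assumption~\eqref{eq:as} is satisfied, and Lemma~\ref{lem:190228-1} --- which juxtaposes $s$ copies of the generator matrix $S_{3,3}$ of the self-orthogonal constant-weight simplex $[13,3,9]$ code with $G_{3,3}(m)$ --- turns each base code into a ternary LCD $[n+13s,3,d+9s]$ code for every $s\ge 0$. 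Ranging over the base cases then produces, for every $n\ge 3$, a ternary LCD code attaining the claimed minimum weight, and the lower bound follows. Combining the two bounds gives $d_3(n,3)$ as asserted.

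The main obstacle is the exhaustive computer verification that there is no ternary LCD $[3r,3,2r]$ code with $d^\perp\ge 2$ for $r\in\{4,6,7,8,9\}$: Lemma~\ref{lem:190125-5} confines the search to the vectors $m$ satisfying~\eqref{eq:mi} together with the normalisation $m_1,m_2,m_5\ge 1$ of Section~\ref{sec:Cmethod}, but the number of such $m$ grows rapidly with $r$, so the systematic nonsingularity test of $G_{3,3}(m)G_{3,3}(m)^{T}$ over all of them is the computational core of the argument; a lesser point to check is that the base codes of length at most $10$ really can be chosen with dual distance at least $2$, as required for Lemma~\ref{lem:190228-1} to apply to a genuine $C_{3,3}(m)$.
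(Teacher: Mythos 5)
Your argument is correct and follows essentially the same route as the paper: the lower bound via the base codes of lengths $3$--$15$ extended by Lemma~\ref{lem:190228-1}, and the upper bound via the Griesmer bound, Lemma~\ref{lem:n0} for $n\equiv 0\pmod{13}$, and Proposition~\ref{prop:3-3-0} reducing the residues $t\in\{1,5,8,9,11,12\}$ to the nonexistence of ternary LCD $[3r,3,2r]$ codes for $r\in\{3,4,6,7,8,9\}$, settled by \cite[Table~4]{AH-C} for $r=3$ and by the computer search of Section~\ref{sec:Cmethod} otherwise. This is exactly the paper's proof, including its reliance on the same computational verification.
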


\subsubsection{Classification of ternary optimal LCD codes of dimension 3}

We apply Theorem~\ref{thm:main} (i) to the case $(q,k)=(3,3)$.
For $n \ge 3$,
write $n=13s+t$, where $s \in \ZZ_{\ge 0}$ and 
$t \in \{0,1,\ldots,12\}$.
Let $r=r_{3,13s+t,3,d_3(13s+t,3)}$ and
$s'=s'_{3,13s+t,3,d_3(13s+t,3)}$ be the integers defined
in~\eqref{eq:r} and~\eqref{eq:s0}, respectively.
For each $13s+t$, we list $d_3(n,3)$, $s'$ and $r$ in Table~\ref{Tab:3-3}.
Then $d_3(13s+t,3)$ is written as
$9s+\alpha(t)$, where $\alpha(t)$ is a constant depending on only $t$.
Since the minimum weight satisfies 
the assumption~\eqref{eq:as} in Theorem~\ref{thm:main2},
we have the following:

\begin{prop}\label{prop:3-3}
There is a one-to-one correspondence between
equivalence classes of ternary LCD
$[3r,2,2r]$ codes with dual distances $d^\perp \ge 2$
and
equivalence classes of ternary LCD $[13s+t,3,d_3(13s+t,3)]$ codes
with dual distances $d^\perp \ge 2$
for every integer $s \ge s'$.
\end{prop}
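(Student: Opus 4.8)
The plan is to obtain Proposition~\ref{prop:3-3} as the special case $(q,k)=(3,3)$ of Theorem~\ref{thm:main2}~(i). Here $k_0=2$, so $(q,k_0)=(3,2)$ lies in $\{(2,3),(3,2)\}$ and $k=3\ge k_0$; moreover $[k]_q=[3]_3=\frac{3^3-1}{3-1}=13$ and $q^{k-1}=9$. For $n\ge 3$ I would write $n=13s+t$ with $s\in\ZZ_{\ge 0}$ and $t\in\{0,1,\ldots,12\}$, take the minimum weight considered in Theorem~\ref{thm:main2} to be $d_3(13s+t,3)$, and note that $n=[k]_q\cdot s+t\ge k$ in the relevant range.

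First I would check that this choice of minimum weight satisfies the assumption~\eqref{eq:as}. Using the value of $d_3(n,3)$ established in the preceding proposition, together with the identity $\left\lfloor\frac{9(13s+t)}{13}\right\rfloor=9s+\left\lfloor\frac{9t}{13}\right\rfloor$, one gets
\[
d_3(13s+t,3)=9s+\alpha(t),\qquad
\alpha(t)=
\begin{cases}
\left\lfloor\frac{9t}{13}\right\rfloor & \text{if } t\in\{4,7,10\},\\
\left\lfloor\frac{9t}{13}\right\rfloor-1 & \text{otherwise,}
\end{cases}
\]
so $\alpha(t)$ depends only on $t$ and~\eqref{eq:as} holds with $q^{k-1}s=9s$. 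The values $\alpha(t)$, together with $r=r_{3,13s+t,3,d_3(13s+t,3)}$ and $s'=s'_{3,13s+t,3,d_3(13s+t,3)}$, are those recorded in Table~\ref{Tab:3-3}; by Lemma~\ref{lem:rqcon} they depend only on $q,k,t$, as the tabulation presupposes. Reading off the table, $r\ge 1$ for every $t\in\{0,1,\ldots,12\}$, hence $qr=3r\ge 3=k$, which is the remaining hypothesis of Theorem~\ref{thm:main2}. (The condition $qd-(q-1)n\ge 1$ is, as noted after~\eqref{eq:s0}, equivalent to $s\ge s'$, which is exactly the range in the statement.)

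With all hypotheses verified, Theorem~\ref{thm:main2}~(i) applies and yields, for every integer $s\ge s'$, a one-to-one correspondence between equivalence classes of ternary LCD codes with dual distance $d^\perp\ge 2$ and parameters $[qr,k,(q-1)r]=[3r,3,2r]$, and equivalence classes of ternary LCD codes with dual distance $d^\perp\ge 2$ and parameters $[[k]_q\cdot s+t,k,q^{k-1}s+\alpha(t)]=[13s+t,3,d_3(13s+t,3)]$, which is precisely the assertion. Since the proposition is a direct specialization of Theorem~\ref{thm:main2}, there is no genuine obstacle in the argument; the only point requiring care is the first step, namely confirming from the explicit value of $d_3(n,3)$ that the optimal minimum weight is genuinely affine in $s$ with a $t$-dependent constant term and that the associated $r$ is positive for each residue $t$, so that $qr\ge k$ and Theorem~\ref{thm:main2} is applicable. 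Once the entries of Table~\ref{Tab:3-3} are in place, the conclusion is immediate.
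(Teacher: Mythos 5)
Your proposal is correct and takes essentially the same route as the paper: write $n=13s+t$, observe that $d_3(13s+t,3)=9s+\alpha(t)$ with $\alpha(t)$ depending only on $t$ (so assumption~\eqref{eq:as} holds, with $r$ and $s'$ as in Table~\ref{Tab:3-3}), and then invoke Theorem~\ref{thm:main2}~(i) for $(q,k)=(3,3)$. Note that the stated parameters ``$[3r,2,2r]$'' in the proposition are a typo for $[3r,3,2r]$, which your argument implicitly corrects.
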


\begin{table}[thbp]
\caption{Ternary optimal LCD $[n,3]$ codes with  dual distances $d^\perp\ge 2$}
\label{Tab:3-3}
\begin{center}
{\small
\begin{tabular}{c|c|c|c|c}
\noalign{\hrule height0.8pt}
 $n$ &$d_3(n,3)$& $s'$ & $r$ & $N_3(n,3)$\\
\hline
 $13s   $&$9s-1$&$4$ &  13&3\\
 $13s+1 $&$9s-1$&$6$ &  22&144\\
 $13s+2 $&$9s  $&$5$ &  18&15\\
 $13s+3 $&$9s+1$&$4$ &  14&4\\
 $13s+4 $&$9s+2$&$3$ &  10&1\\
 $13s+5 $&$9s+2$&$5$ &  19&45\\
 $13s+6 $&$9s+3$&$4$ &  15&4\\
 $13s+7 $&$9s+4$&$3$ &  11&1\\
 $13s+8 $&$9s+4$&$5$ &  20&54\\
 $13s+9 $&$9s+5$&$4$ &  16&13\\
 $13s+10$&$9s+6$&$3$ &  12&1\\
 $13s+11$&$9s+6$&$5$ &  21&54\\
 $13s+12$&$9s+7$&$4$ &  17&15\\
\noalign{\hrule height0.8pt}
\end{tabular}
}
\end{center}
\end{table}

\begin{table}[thbp]
\caption{$N_3(n,3)$}
\label{Tab:3-3-2}
\begin{center}
{\small
\begin{tabular}{c|c||c|c||c|c||c|c}
\noalign{\hrule height0.8pt}
$n$ & $N_3(n,3)$& $n$ & $N_3(n,3)$&
$n$ & $N_3(n,3)$& $n$ & $N_3(n,3)$\\
 \hline
$ 4$ &  1&$17$ &  1&$30$ &  1&$44$ & 45\\
$ 5$ &  2&$18$ & 26&$31$ & 45&$45$ &  4\\
$ 6$ &  2&$19$ &  4&$32$ &  4&$47$ & 54\\
$ 7$ &  1&$20$ &  1&$33$ &  1&$48$ & 13\\
$ 8$ &  7&$21$ & 41&$34$ & 54&$50$ & 54\\
$ 9$ &  3&$22$ & 13&$35$ & 13&$51$ & 15\\
$10$ &  1&$23$ &  1&$36$ &  1&$53$ &144\\
$11$ & 12&$24$ & 46&$37$ & 54&$54$ & 15\\
$12$ &  8&$25$ & 15&$38$ & 15&$57$ & 45\\
$13$ &  3&$26$ &  3&$39$ &  3&$60$ & 54\\
$14$ & 39&$27$ &110&$40$ &144&$63$ & 54\\
$15$ & 10&$28$ & 15&$41$ & 15&$66$ &144\\
$16$ &  4&$29$ &  4&$42$ &  4& &\\
 \noalign{\hrule height0.8pt}
\end{tabular}
}
\end{center}
\end{table}

\begin{itemize}
\item Case $s \ge s'$:
By the method given in Section~\ref{sec:Cmethod},
our computer search completes a classification of ternary
LCD $[3r,3,2r]$ codes with dual distances
$d^\perp \ge 2$ for $r\in R$, where
\[
R=\{
 10,
 11,
 12,
 13,
 14,
 15,
 16,
 17,
 18,
 19,
 20,
 21,
 22\}.
\]
The numbers $N_3(3r,3)$ of inequivalent
ternary LCD $[3r,3,2r]$ codes with dual distances
$d^\perp \ge 2$ are listed in Table~\ref{Tab:3-3-2}.
These codes $T_{3r,3,i}$ $(i\in \{1,2,\ldots,N_3(3r,3)\})$
are presented by codes $C_{3,3}(m)$
with generator matrices $G_{3,3}(m)$ of form~\eqref{eq:Gqkm}.
For $r \in \{10,11,12\}$, 
in order to display the codes $T_{3r,3,i}$,
the vectors $m$
are listed in Table~\ref{Tab:3-3-3}.
For all the parameters $r$, the vectors $m$
are available at
\url{http://www.math.is.tohoku.ac.jp/~mharada/AHS}.

\begin{table}[thb]
\caption{Ternary LCD $[n,3,d]$ codes $T_{n,3,i}$}
\label{Tab:3-3-3}
\begin{center}
{\small
\begin{tabular}{c|c|c}
\noalign{\hrule height0.8pt}
$C_{3,3}(m)$ & $(3r,2r)$ & $m$ \\\hline
$T_{30,3,1}$&$(30,20)$& $( 3, 3, 2, 2, 3, 2, 2, 3, 2, 2, 2, 2, 2)$\\
$T_{33,3,1}$&$(33,22)$& $( 3, 3, 3, 2, 3, 3, 3, 3, 2, 2, 2, 2, 2)$\\
$T_{36,3,1}$&$(36,24)$& $( 3, 3, 3, 3, 3, 3, 3, 3, 2, 3, 3, 2, 2)$\\
 \hline
$C_{3,3}(m)$ & $(n,d)$ & $m$ \\\hline
$T_{17,3,1}$&$(17,11)$&$(2,2,1,1,2,1,1,2,1,1,1,1,1)$\\
$T_{20,3,1}$&$(20,13)$&$(2,2,2,1,2,2,2,2,1,1,1,1,1)$\\
$T_{23,3,1}$&$(23,15)$&$(2,2,2,2,2,2,2,2,1,2,2,1,1)$\\
 \noalign{\hrule height0.8pt}
 \end{tabular}
}
\end{center}
\end{table}

By Proposition~\ref{prop:3-3},
we have a classification of ternary optimal
LCD $[13s+t,3]$ codes  with dual distances $d^\perp \ge 2$ for
      $s \ge s'$.
      
\item Case $s < s'$:
A classification of ternary LCD codes was done in~\cite[Table~4]{AH-C}
for lengths up to $10$.
For $n \le 10$,
we calculated the numbers $N_3(n,3)$ of inequivalent
ternary optimal LCD $[n,3]$ codes with dual distances $d^\perp \ge 2$,
where the numbers $N_3(n,3)$ are listed in Table~\ref{Tab:3-3-2}.
By the method given in Section~\ref{sec:Cmethod},
our computer search completes a classification of ternary optimal
LCD $[n,3]$ codes with dual distances
$d^\perp \ge 2$ for $n \in N \setminus \{3r \mid r \in R\}$, where
\[
N=
\{11,12,\ldots,42,
44,
45,
47,
48,
50,
51,
53,
54,
57,
60,
63,
66
\}.\]
The numbers $N_3(n,3)$ are listed in Table~\ref{Tab:3-3-2}.
These codes $T_{n,3,i}$ $(i \in \{1,2,\ldots,N_3(n,3)\})$
are presented by codes $C_{3,3}(m)$
with generator matrices $G_{3,3}(m)$ of form~\eqref{eq:Gqkm}.
For $(n,d) \in \{(17,11),(20,13),(23,15)\}$,
in order to display the codes $T_{n,3,i}$,
the vectors $m$
are listed in Table~\ref{Tab:3-3-3}.
For all the lengths $n$, the vectors $m$
are available at
\url{http://www.math.is.tohoku.ac.jp/~mharada/AHS}.
\end{itemize}
Therefore, we have the following:

\begin{prop}\label{prop:3-3-2}
\begin{enumerate}
\item 
There are $3$ inequivalent ternary optimal LCD
      $[13s,3]$ codes with dual distances $d^\perp \ge 2$
      for every integer $s \ge 1$.
\item 
There are $144$ inequivalent ternary optimal LCD
      $[13s+1,3]$ codes with dual distances $d^\perp \ge 2$
      for every integer $s \ge 3$.
\item 
Suppose that $t \in \{2,12\}$.
Then there are $15$ inequivalent ternary optimal LCD
      $[13s+t,3]$ codes with dual distances $d^\perp \ge 2$
      for every integer $s \ge 2$
if $t=2$ and $s \ge 1$ if $t=12$.
      
\item Suppose that $t \in \{3,6\}$.
Then
there are $4$ inequivalent ternary optimal LCD
      $[13s+t,3]$ codes with dual distances $d^\perp \ge 2$
      for every integer $s \ge 1$.

\item Suppose that $t \in \{4,7,10\}$.
Then
there is a unique ternary optimal LCD
$[13s+t,3]$ code with dual distance $d^\perp \ge 2$, up to equivalence,
for every integer $s \ge 0$.

\item 
There are $45$ inequivalent ternary optimal LCD
      $[13s+5,3]$ codes with dual distances $d^\perp \ge 2$
      for every integer $s \ge 2$.

\item Suppose that $t \in \{8,11\}$.
Then there are $54$ inequivalent ternary optimal LCD
      $[13s+t,3]$ codes with dual distances $d^\perp \ge 2$
      for every integer $s \ge 2$.

\item 
There are $13$ inequivalent ternary optimal LCD
      $[13s+9,3]$ codes with dual distances $d^\perp \ge 2$
      for every integer $s \ge 1$.
\end{enumerate}
\end{prop}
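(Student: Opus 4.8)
The plan is to carry out, for $(q,k)=(3,3)$, the same two-step scheme already used for dimension~$2$ and for the binary cases: reduce the classification over a whole family of lengths to a finite computation by Theorem~\ref{thm:main2}~(i) (specialized in Proposition~\ref{prop:3-3}), and then settle the finitely many remaining small lengths by the direct search of Section~\ref{sec:Cmethod}. For each residue $t\in\{0,1,\ldots,12\}$ I would write $n=13s+t$, read off $r=r_{3,13s+t,3,d_3(13s+t,3)}$ and $s'=s'_{3,13s+t,3,d_3(13s+t,3)}$ from Table~\ref{Tab:3-3}, and split into the cases $s\ge s'$ and $s<s'$. The hypotheses of Theorem~\ref{thm:main2} are in force: $(q,k_0)=(3,2)$ with $k=3\ge k_0$; by the determination of $d_3(n,3)$ carried out just above, $d_3(13s+t,3)=9s+\alpha(t)$ with $\alpha(t)$ depending only on $t$, so assumption~\eqref{eq:as} holds; and $qr=3r\ge 30\ge k$ for every $r$ occurring in Table~\ref{Tab:3-3}.

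For $s\ge s'$: Proposition~\ref{prop:3-3} provides a bijection between equivalence classes of ternary LCD $[13s+t,3,d_3(13s+t,3)]$ codes with $d^\perp\ge2$ and equivalence classes of ternary LCD $[3r,3,2r]$ codes with $d^\perp\ge2$, and by Lemma~\ref{lem:rqcon} the latter set does not depend on $s$. Since $3r\equiv t\pmod{13}$ and $2r=9(s'-1)+\alpha(t)$ by~\eqref{eq:q2}, one has $2r=d_3(3r,3)$, so the codes $[3r,3,2r]$ are exactly the optimal LCD codes of that length and the count we want equals $N_3(3r,3)$. It then suffices to compute $N_3(3r,3)$ for $r\in R$ by the method of Section~\ref{sec:Cmethod}: enumerate all $m\in\ZZ_{\ge0}^{13}$ with $m_1,m_2,m_5\ge1$ satisfying~\eqref{eq:mi}, keep those for which $GG^T$ is nonsingular, and classify the survivors up to equivalence via the digraph/{\tt nauty} reduction, verifying the output in {\sc Magma}. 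Reading off the resulting numbers (the last column of Table~\ref{Tab:3-3}, with representatives recorded in Table~\ref{Tab:3-3-3}) gives parts (i)--(viii) for all $s\ge s'$.

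For $s<s'$: the remaining lengths are the finitely many $n=13s+t$ with $0\le s\le s'-1$; their union is contained in the set $N$ together with the lengths $n\le10$ already classified in~\cite[Table~4]{AH-C}. I would classify ternary optimal LCD $[n,3]$ codes with $d^\perp\ge2$ for each such $n$ directly by the Section~\ref{sec:Cmethod} method, tabulate $N_3(n,3)$ (Table~\ref{Tab:3-3-2}), and compare, within each residue class modulo $13$, with the stable value $N_3(3r,3)$ found above; the lower bound on $s$ appearing in each part of the proposition is then the smallest $s$ from which $N_3(13s+t,3)$ has already reached that stable value (for instance $s\ge0$ when the count is $1$ throughout, as for $t\in\{4,7,10\}$, and $s\ge2$ for $t=2$ where $N_3(15,3)=10\ne15=N_3(28,3)$). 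The main obstacle is precisely this computational step: the number of admissible vectors $m$, hence of candidate codes to be tested for equivalence, grows quickly with $r$, so the real work lies in organizing the enumeration forced by~\eqref{eq:mi}, performing the many determinant evaluations over $\FF_3$, and certifying that the {\tt nauty}-based classification of the surviving codes is exhaustive; the reduction afforded by~\eqref{eq:as} is exactly what keeps the search finite.
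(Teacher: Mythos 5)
Your proposal is correct and follows essentially the same route as the paper: Proposition~\ref{prop:3-3} (i.e.\ Theorem~\ref{thm:main2}~(i) with $(q,k)=(3,3)$, where the $[3r,3,2r]$ codes are indeed optimal since $2r=9(s'-1)+\alpha(t)=d_3(3r,3)$) handles all $s\ge s'$, and the finitely many lengths with $s<s'$ are settled by the Section~\ref{sec:Cmethod} search together with the known classification for $n\le 10$, the stated lower bounds on $s$ being exactly where $N_3(13s+t,3)$ first agrees with the stable value $N_3(3r,3)$ in Table~\ref{Tab:3-3-2}. This matches the paper's proof, including the computational content.
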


\begin{rem}
If there is a ternary $[n,n-3,d]$ code with $d \ge 3$, then
$n \le 13$ by the sphere-packing bound. 
Let $T_{n,3,1}$ denote the unique ternary optimal LCD
$[n,3]$ code for $n \in\{4,7,10\}$.
The dual distances of all the codes except 
$T_{n,3,1}$ $(n \in\{4,7,10\})$ in the above proposition
are exactly $2$.
The codes $T_{n,3,1}$ $(n \in\{4,7,10\})$ 
have dual distances $3$.
\end{rem}

By Lemma~\ref{lem:class}, the above proposition
completes a classification of ternary optimal LCD codes of
dimension $3$.

\bigskip
\noindent
{\bf Acknowledgments.}
The authors would like to thank Tatsuya Maruta for his useful comments.
This work was supported by JSPS KAKENHI Grant Number 19H01802.


\end{document}